\setlist[itemize]{leftmargin=*}
\numberwithin{equation}{section}
\newcommand{\bfI}{\boldsymbol I}
\newcommand{\bfV}{\boldsymbol V}
\newcommand{\mcA}{\mathcal{A}}
\newcommand{\mcF}{\mathcal{F}}
\newcommand{\mcK}{\mathcal{K}}
\newcommand{\mcN}{\mathcal{N}}
\newcommand{\mcT}{\mathcal{T}}
\newcommand{\mcX}{\mathcal{X}}
\newcommand{\mcu}{{H}}
\newcommand{\lp}{\left(}
\newcommand{\rp}{\right)}
\newcommand{\IR}{\mathbb{R}}
\newcommand{\Gammah}{{\Gamma_h}}
\newcommand{\mcKh}{\mcK_h}        
\newcommand{\mcTh}{\mcT_h}
\newcommand{\nablash}{\nabla_{\Gamma_h}}
\newcommand{\Ps}{{P}_\Gamma}
\newcommand{\Psh}{{P}_\Gammah}
\newtheorem{remark}{Remark}[section]
\newtheorem{lem}{Lemma}[section]
\newtheorem{thm}{Theorem}[section]
\newtheorem{rem}{Remark}[section]
\newenvironment{proof}{\noindent \newline {\bf Proof.}}
{\hfill \mbox{\fbox{} } \newline}
\begin{document}
\title{\bf Stabilization of High Order Cut Finite Element Methods on Surfaces\thanks{This research was supported in part by the Swedish Foundation for Strategic Research Grant No.\ AM13-0029, the Swedish Research Council Grants Nos. 2013-4708 and 2014-4804, and the Swedish Research Programme Essence}}
\author{Mats G. Larson
\footnote{Department of Mathematics and Mathematical Statistics, Ume{\aa} University, SE-90187 Ume{\aa}, Sweden} 
 \mbox{ }  
Sara Zahedi
\footnote{Department of Mathematics, KTH Royal Institute of Technology,
SE-100 44 Stockholm, Sweden}
}
\date{}
\numberwithin{equation}{section} 
\maketitle
\begin{abstract}
We develop and analyze a stabilization term for cut finite element
approximations of an elliptic second order partial differential
equation on a surface embedded in $\IR^d$. The new stabilization term
combines properly scaled normal derivatives at the surface together
with control of the jump in the normal derivatives across faces and 
provides control of the variation of the finite element solution on the 
active three dimensional elements that intersect the surface.
We show that the condition number of the stiffness matrix 
is $O(h^{-2})$, where $h$ is the mesh parameter. The
stabilization term works for linear as well as for higher-order elements and the derivation of its stabilizing properties is quite straightforward, which we illustrate 
by discussing the extension of the analysis to general $n$-dimensional smooth manifolds embedded in $\IR^d$, with codimension $d-n$.  We also formulate properties of a general stabilization term that are sufficient to prove optimal scaling of the condition number and optimal error estimates in energy- and $L^2$-norm.  We finally present numerical studies confirming our theoretical results.
\end{abstract} 

\section{Introduction} 
 \paragraph{CutFEM for Surface Partial Differential Equations.}
CutFEM (Cut Finite Element Method) provides a new high order finite element 
method for solution of partial 
differential equations on surfaces. The main approach is to embed the surface into 
a three dimensional mesh and to use the restriction of the finite element functions 
to the surface as trial and test functions. More precisely, let $\Gamma$ be a smooth 
closed surface embedded in $\IR^3$ with exterior unit normal $n$ and consider the 
Laplace-Beltrami problem: find $u\in H^1(\Gamma)$ such that
\begin{equation}
a(u,v) = l(v) \qquad \forall v \in H^1(\Gamma).
\end{equation}
The forms are defined by
\begin{equation}
a(w,v) = \int_\Gamma \nabla_\Gamma w\cdot  \nabla_\Gamma v,\qquad l(v) = \int_\Gamma fv, 
\end{equation}
where $\nabla_\Gamma$ is the surface gradient and $f\in L^2(\Gamma)$ a given function 
with average zero. Let now $\Gamma$ be embedded into a polygonal domain $\Omega$ 
equipped with a quasi-uniform mesh $\mcT_{h,0}$ 
and let $\Gamma_h$ be a family of discrete surfaces that converges to $\Gamma$ in an 
appropriate manner. Let the active mesh 
$\mcT_h$ consist of all elements that intersect $\Gamma_h$. Let $V_h$ be the space 
of continuous piecewise polynomial functions on $\mcT_h$ with average zero on $\Gammah$. 
The basic CutFEM takes the form: find $u_h \in V_h$ such that
\begin{equation}\label{eq:basic-cutfem}
a_h(u_h,v) = l_h(v) \qquad \forall v \in V_h,
\end{equation}
where 
\begin{equation}
a_h(w,v) = \int_{\Gamma_h} \nabla_\Gamma w\cdot  \nabla_\Gamma v, 
\qquad l_h(v) = \int_{\Gamma_h} f_h v, 
\end{equation}
and $f_h$ is a suitable representation of $f$ on $\Gamma_h$.
In the context of surface partial differential equations this approach, also called trace finite 
elements, was first introduced in \cite{OlReGr09} and has then been developed 
in different directions including high order approximations in \cite{Reu15}, discontinuous 
Galerkin methods \cite{BurHanLarMas17}, transport problems 
\cite{OlsReuXu14-b}, embedded membranes \cite{CenHanLar16}, coupled bulk-surface 
problems \cite{BurHanLarZah16} and \cite{GroOlsReu15}, minimal surface problems 
\cite{CenHanLar15}, and time 
dependent problems on evolving surfaces \cite{HanLarZah16}, \cite{OlsReu14}, \cite{OlsReuXu14}, and \cite{Zah17}.
We also refer to the overview article \cite{BurClaHanLarMas15} and the references therein.

\paragraph{Previous Work on Preconditioning and Stabilization.}
The basic CutFEM method (\ref{eq:basic-cutfem})  manufactures a potentially ill conditioned 
linear system of equations  and stabilization or preconditioning is therefore needed. In \cite{OlsReu10} 
it was shown that diagonal preconditioning works for piecewise linear approximation and in 
\cite{Reu15} it was shown that also quadratic elements can be preconditioned if the full gradient 
is used instead of the tangential gradient in the bilinear form $a_h$, see also \cite{DecEllRan14} 
where this formulation was proposed.

In \cite{BurHanLar15} a stabilized version of (\ref{eq:basic-cutfem}) was introduced. The method 
takes the form
\begin{equation}\label{eq:stabilized-cutfem}
a_h(u_h,v) + s_h(u_h,v) = l(v) \qquad \forall v \in V_h,
\end{equation}
where $s_h$ is a stabilization form which is added in order to ensure stability of the method. 
More precisely we show that there is a constant such that for all $v \in V_h$,
\begin{equation}\label{eq:stab-est}
\| v \|^2_{L^2(\mcT_h)} \lesssim  h ( a_h(v,v) + s_h(v,v) ). 
\end{equation}
Using (\ref{eq:stab-est}) and the properties of $a_h$ we can show that the condition 
number $\kappa$ of the stiffness matrix satisfies the optimal bound 
\begin{equation}
\kappa \lesssim h^{-2}.
\end{equation}

In \cite{BurHanLar15} only piecewise linear finite elements were considered and 
the stabilization term was defined by
\begin{align} \label{eq:stabface}
s_{h}(w,v)&= c_{F}  ([D_{n_F}^1 w], [D_{n_F}^1 v])_{\mcF_h}, 
\end{align}
where $[D_{n_F}^1 v]$ is the jump in the normal derivative of $v$ at the face $F$, 
$\mcF_{h}$ is the set of internal faces (i.e. faces with two neighbors) in the active 
mesh $\mcT_h$, and $c_{F}>0$ is a stabilization parameter. Optimal order bounds on 
the error and condition number were established. Furthermore, for linear elements 
the so called full gradient stabilization 
\begin{equation} \label{eq:fullgrad}
s_h(v,w) = ch(\nabla v,\nabla w)_{\mcT_h}
\end{equation}
was proposed and analyzed in \cite{BurHanLarMasZah16} as a simpler alternative, and for 
higher order elements the normal gradient stabilization 
\begin{equation}\label{eq:normalgradel}
s_h(w,v) = c_{\mcT}h^{\alpha} (n_h \cdot \nabla w,n_h \cdot \nabla v)_{\mcT_h}, \qquad \alpha \in [-1,1],
\end{equation}
was proposed and analysed independently in \cite{BurHanLarMas16} and \cite{GraLehReu16}. 
The restrictions on $\alpha$ essentially comes from a lower bound which implies that optimal 
order a priori error estimates hold and an upper bound which implies that the desired stability 
estimate (\ref{eq:stab-est}) holds. Note that stronger control than (\ref{eq:stab-est}) is sometimes 
demanded, for instance in cut discontinuous Galerkin methods \cite{BurHanLarMas17}, which 
may lead to stronger upper restrictions on $\alpha$.

The last two stabilization terms, \eqref{eq:fullgrad} and \eqref{eq:normalgradel}, can however not be used in the discretization of bulk problems without destroying the convergence order while the ghost penalty stabilization term~\cite{Bu10}, which as \eqref{eq:stabface} acts on the element faces, can be used. Thus, in many applications where both bulk and surface problems occur face stabilization is still needed when discretizing the problem with CutFEM. It should be noted that the stabilization term~\eqref{eq:stabface} has been used also for other reasons than controlling the condition number. It is used for example in~\cite{BuHaLaZa15} to stabilize for convection and in~\cite{HanLarZah15} to improve on the accuracy of computing the mean curvature vector of a surface. We study the problem of computing the mean curvature vector of a surface based on the Laplace-Beltrami operator in Section~\ref{sec:meancurv}.

\paragraph{New Contributions.}
In this work we consider stabilization for linear as well as higher-order elements using a combination of 
face stabilization and normal derivative stabilization at the actual surface. More precisely 
the stabilization term takes the form 
\begin{equation}
s_h (v,w) = s_{h,F}(v,w)+s_{h,\Gamma}(v,w)
\end{equation}
with
\begin{align} 
s_{h,F}(v,w)&= \sum_{j=1}^p c_{F,j} h^{2(j-1 + \gamma)} ([D_{n_F}^j v], [D_{n_F}^j w])_{\mcF_h}, 
\\
s_{h,\Gamma}(v,w)&=\sum_{j=1}^p c_{\Gamma,j} h^{2(j-1+ \gamma)}  (D_{n_h}^j v, D_{n_h}^j w)_{\Gamma_h}, 
\end{align}
where $\gamma \in [0,1]$ is a parameter, $[D_{n_F}^j v]$ is the jump in the normal derivative 
of order $j$ across the face $F$, and $c_{F,j}>0$, $c_{\Gamma,j}>0$,  are stabilization constants. This stabilization term with $\gamma=1$ has also been used in a high order space-time CutFEM on evolving surfaces \cite{Zah17}.  
We note that also for this stabilization term there is a certain range of scaling with the mesh 
parameter $h$, and in fact the following stronger version of (\ref{eq:stab-est}) holds for $v \in V_h$
\begin{equation}
\| v \|^2_{L^2(\mcT_h)} + h^{2\gamma} \| \nabla v \|^2_{L^2(\mcT_h)} \lesssim h( a_h(v,v) + s_h(v,v) ),
\end{equation} 
where we get stronger control of the gradient for smaller $\gamma \in [0,1]$, which 
corresponds to stronger stabilization.

Using the combination of the face and surface stabilization terms the proof of (\ref{eq:stab-est}) 
consists of two steps: 
\begin{itemize}
\item Using the face
terms we can estimate the $L^2$ norm of a finite element function at an element in terms 
of the $L^2$ norm at a neighboring element which share a face and the stabilization term 
on the shared face. Repeating this procedure we can pass from any element to an element
which has a large intersection, $|T \cap \Gamma_h|\gtrsim h^2$, with $\Gamma_h$. 
\item  Using the stabilization of normal derivatives at the surface we can control a finite element 
function on elements  which have large intersection with $\Gamma_h$. 
\end{itemize}
Using shape regularity we can show that one can pass from any element in $\mcT_h$ to an 
element with a large intersection in a uniformly bounded number of steps, see \cite{BurHanLar15} 
and \cite{DemOls12}. The analysis is quite straight forward and may in fact also be extended to the 
case on an $n$-dimensional smooth manifold embedded in $\IR^d$ for general codimension 
$cd = d - n$. An important special case is a curve embedded in $\IR^3$, which may for instance 
be an intersection of two surfaces, see also \cite{BurHanLarLar18} for generalizations to so called 
mixed dimensional problems. For clarity, we consider a two dimensional surface embedded 
in $\IR^3$ throughout the paper and discuss the minor modifications necessary in a separate section. 
We also refer to  \cite{BurHanLarMas16} for general background on the analysis of problems with 
higher codimension embeddings.

\paragraph{Outline.} In Section 2 we introduce the model problem, in Section 3 we formulate 
the finite element method, in Section 4 we collect some basic preliminary results, in Section 5 
we formulate the properties of the stabilization term, in Section 6 we establish an optimal order 
condition number estimate,  in Section 7 we discuss the extension to problems on $n$-dimensional 
smooth manifolds embedded in $\IR^d$, in Section 8 we prove optimal order a priori 
error estimates, and in Section 9 we present numerical results that support our theoretical findings. In the last section, Section 10, we conclude and discuss the advantages with the proposed stabilization.

\section{The Laplace--Beltrami Problem on a Surface}

\paragraph{The Surface.}
Let $\Gamma$ be a smooth, closed, simply connected surface in $\IR^3$ with 
exterior unit normal $n$ embedded in a polygonal domain $\Omega \subset \IR^3$. 
Let $U_{\delta}(\Gamma)$ be the open tubular neighborhood of $\Gamma$ with 
thickness $\delta>0$, 
\begin{equation}\label{Udelta} 
U_{\delta}(\Gamma) = \{ x \in \IR^3 : |d(x)| < \delta \},
\end{equation}
where $d(x)$ is the signed distance function of $\Gamma$ with $d<0$ in the 
interior of $\Gamma$ and $d>0$ in the exterior. Note that $n=\nabla d$ is the 
outward-pointing unit normal on $\Gamma$. Then there is $\delta_0>0$ 
such that  for each $x \in U_{\delta_0}(\Gamma)$ the closest point projection:
\begin{equation}\label{eq:closestpoint}
p(x)=x-d(x)n(p(x))
\end{equation}
maps $x$ to a unique closest point on $\Gamma$. In particular, we require 
\begin{equation}
\delta_0 \max_{i=1,2} \|\kappa_i \|_{L^\infty(\Gamma)} < 1,
\end{equation}
where for $x \in \Gamma$, $\kappa_i$ are the principal curvatures.
We also define the extension of a function $u$ on $\Gamma$ to 
$U_{\delta_0}(\Gamma)$ by
\begin{equation} \label{eq:extendu}
u^e(x)=u(p(x)), \quad x\in U_{\delta_0}(\Gamma).
\end{equation} 

\paragraph{The Problem.}
We consider the problem: find 
$u \in H^1_0(\Gamma) = \{v \in H^1(\Gamma) \ | \ \int_\Gamma v \ ds =0 \}$ 
such that   
\begin{equation}
\label{eq:LB}
a(u, v) = l (v)  \qquad \forall v \in H^1_0(\Gamma),
\end{equation}
where 
\begin{equation}
a(u,v)=\int_\Gamma \nabla_\Gamma u \cdot \nabla_\Gamma v \ ds,  \qquad
l(v)=\int_\Gamma f v \ ds.
\end{equation}
Here $\nabla_\Gamma$ is the tangential gradient on $\Gamma$ defined 
by 
\begin{equation}
\nabla_\Gamma u=P_\Gamma \nabla u^e, 
\end{equation}
where
\begin{equation}\label{eq:Proj}
P_\Gamma=I-n \otimes n.
\end{equation} 
We used the extension of $u$ which is constant in the normal direction to $\Gamma$ 
to formally define $\nabla_\Gamma u$. However, the tangential derivative depends only 
on the values of $u$ on $\Gamma$ and does not depend on the particular choice of 
extension.

Using Lax-Milgram's Lemma we conclude that there is a unique solution to 
(\ref{eq:LB}), and we also have the elliptic regularity estimate
\begin{equation}\label{eq:elliptic-reg}
\| u \|_{H^{s+2}(\Gamma)} \lesssim \| f \|_{H^s(\Gamma)},\qquad s\in \mathbb{Z}, \ s \geq -1.
\end{equation}

\section{The Method}

\subsection{The Mesh and Finite Element Space}
\begin{itemize}
\item \emph{The background mesh and space:} Let $\mcT_{h,0}$ be a
  quasi-uniform partition of $\Omega$ with mesh parameter $h \in (0,h_0]$ 
  into shape regular tetrahedra. On $\mcT_{h,0}$ we define $V_{h,0}^p$ to 
  be the space  of continuous piecewise polynomials of degree $p \geq 1$.

\item \emph{The discrete surfaces:} Let $\Gamma_h$ be a sequence of 
surfaces that converges to $\Gamma$. We specify the precise assumptions 
on the convergence below.

\item \emph{The active mesh and the induced surface mesh:} We denote by $\mcT_h$ 
the set consisting of elements in the background mesh $\mcT_{h,0}$ that are cut by 
the discrete geometry $\Gamma_h$. These elements form the so called active mesh. 
See Fig. \ref{fig:illust} for an illustration. The restriction of the active mesh to the 
discrete surface $\Gamma_h$ manufactures an induced cut surface mesh that we 
denote by
\begin{equation}
\mcK_h = \{ K = T\cap \Gamma_h  :  T \in \mcT_h \}.
\end{equation}

\item \emph{The finite element space:} The restriction of the background space 
to the active mesh together with the condition $\int_{\Gamma_h} v \ ds_h =0$ 
defines our finite element space $V_{h}^p$,
\begin{equation}
V_h^p=\left \{v_h \in V^p_{h,0}|_{\mcT_h}  :  \int_{\Gamma_h} v \,ds_h =0 \right\}. 
\end{equation}
\end{itemize}
\begin{figure}
\centering
\includegraphics[width=0.5\textwidth]{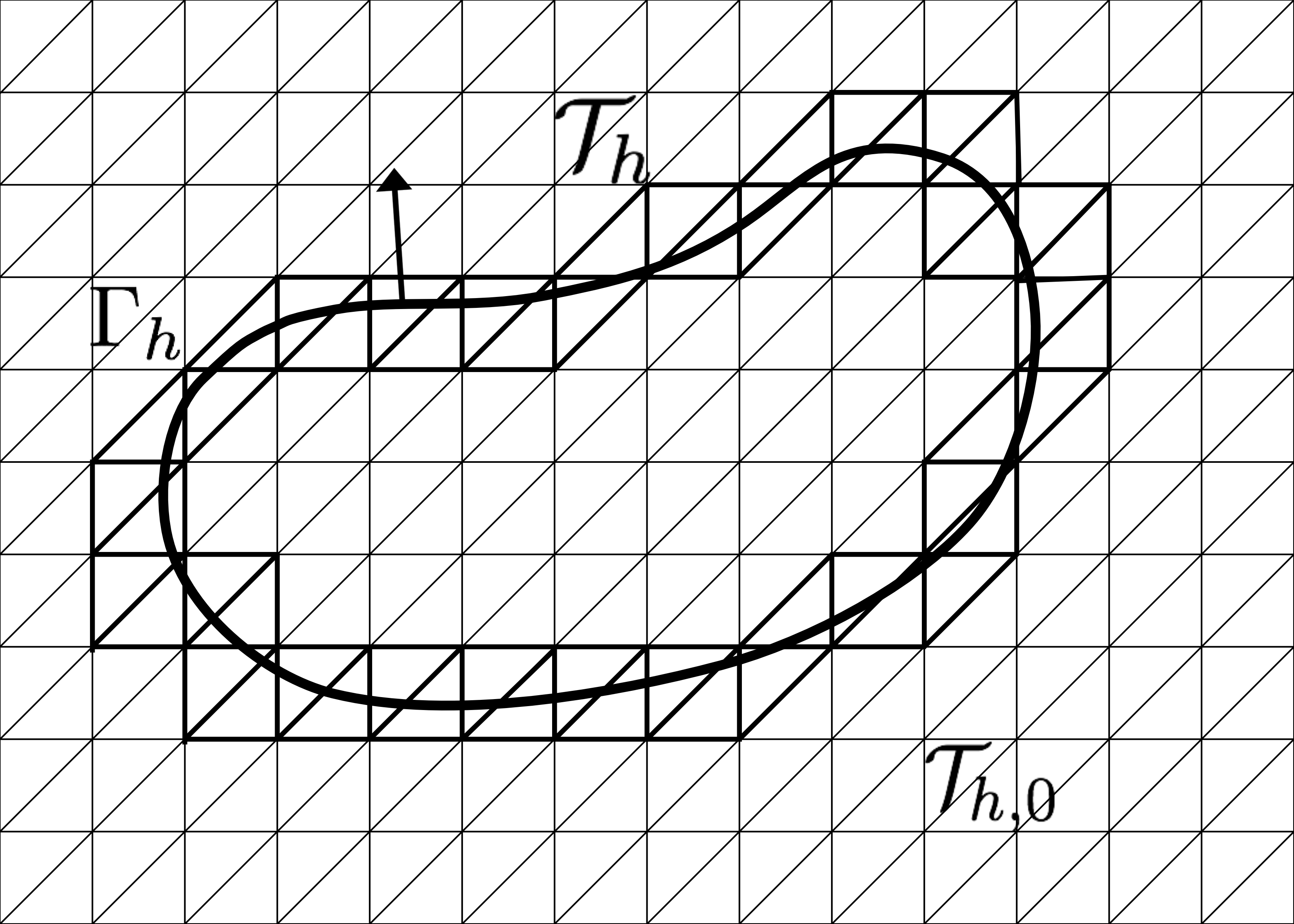}
\caption{The discrete surface $\Gamma_h$, the background mesh $\mcT_{h,0}$, and the active mesh $\mcT_h$.\label{fig:illust}}
\end{figure}

\subsection{The Finite Element Method}
\paragraph{Method.} Find $u_h \in V_h^p$ such that 
\begin{equation}\label{eq:Ah}
A_h(u_h,v) := a_h(u_h,v) + s_h(u_h,v) = l_h(v) \qquad \forall v_h \in V_h^p.
\end{equation}
\paragraph{Forms.} 
\begin{itemize}
\item The forms $a_h$ and $l_h$ are defined by
\begin{equation}
a_h(w,v) = ( \nabla_{\Gamma_h} w , \nabla_{\Gamma_h} v )_\Gammah, 
\qquad 
l_h(v) = (f_h,v)_\Gammah,
\end{equation}
where the tangential gradient $\nabla_{\Gamma_h}$ is defined by 
\begin{equation}
\nabla_{\Gamma_h} u_h=P_{\Gamma_h} \nabla u_h 
\end{equation}
with
\begin{equation}\label{eq:Proj-h}
P_{\Gamma_h}=I-n_h \otimes n_h.
\end{equation}

\item The stabilization form $s_h$ is defined by
\begin{equation}\label{eq:sh}
s_h (w,v) = s_{h,F}(w,v)+s_{h,\Gamma}(w,v),
\end{equation}
where
\begin{align}
s_{h,F}(w,v)&= \sum_{j=1}^p c_{F,j} h^{2(j-1+\gamma) } ([D_{n_F}^j w], [D_{n_F}^j v])_{\mcF_h}, 
\label{eq:sh-face} \\ 
s_{h,\Gamma}(w,v)&=\sum_{j=1}^p c_{\Gamma,j} h^{2(j-1+ \gamma)}
(D_{n_h}^j w, D_{n_h}^j v)_{\mcK_h}. \label{eq:sh-gamma}
\end{align}
Here $(D_a^j v)|_x$ is the $j$-th directional derivative of $v$ along the 
line defined by the unit vector $a(x)$, $[w]\vert_F$ denotes the jump of $w$ 
over the face $F$, $\mcF_{h}$ is the set of internal faces (i.e. faces with two neighbors) 
in the active mesh $\mcT_h$, $c_{\Gamma,j}>0$, $c_{F,j}>0$ are stabilization 
parameters, and $\gamma \in [0,1]$ is a parameter. 
\end{itemize}

\begin{rem}
In the special case $p=1$ we obtain 
\begin{equation}\label{eq:sh-linear}
s_h (w,v) = c_F h^{2\gamma} ([D_{n_F}^1 w], [D_{n_F}^1 v])_{\mcF_h} +
c_{\Gamma} h^{2\gamma} (D_{n_h}^1 w, D_{n_h}^1 v)_{\mcK_h}, 
\end{equation}
where $\gamma \in [0,1]$ which can be compared with the pure face penalty term 
\begin{equation}
s_h (w,v) =c_{F} ([D_{n_F}^1 w], [D_{n_F}^1 v])_{\mcF_h} 
\end{equation}
developed in~\cite{BurHanLar15}. Note that the scalings with the mesh
parameter $h$ are different in the two terms and that for $\gamma>0$
the face penalty term is weaker in the stabilization term we introduce
here.
\end{rem}

\subsection{Assumptions}
\begin{enumerate}
\item[{\bf A1.}] We assume that the closed, simply connected 
approximation $\Gamma_h$ and its unit normal $n_h$ are 
such that for all $h \in (0,h_0]$, 
\begin{align}
\| d \|_{L^\infty(\Gamma_h)} &\lesssim h^{p+1}, 
\\
\|n- n_h \|_{L^\infty(\Gamma_h)} & \lesssim h^{p},
\end{align}
that $n \cdot n_h>0$ on $\Gamma_h$, and that we have a uniform bound on the curvature $\kappa_h$. 

\item[{\bf A2.}] We assume that the mesh is sufficiently small so that
  there is a constant $c$ independent of the mesh size $h$ such that
  $\mcT_h \subset U_\delta(\Gamma)$ with $\delta=ch \leq \delta_0$ and
  that the closest point mapping $p(x): \Gamma_h \rightarrow \Gamma$
  is a bijection.

\item[{\bf A3.}] We assume that the data approximation $f_h$ is such that 
$\||B|f^e-f_h \|_{L^2(\Gammah)}\lesssim h^{p+1} \|f\|_{L^2(\Gamma)}$. 
Here $ds= |B| ds_h$ (see Section \ref{sec:prel} for details).
\end{enumerate} 

\section{Preliminary Results}\label{sec:prel}

\subsection{Norms}

Given a positive semidefinite bilinear form $b$ we let $\|v \|^2_b = b(v,v)$ 
denote the associated seminorm. In particular, we will use the following norms 
on $V^p_h + H^{p+1}(\mcT_h)$,
\begin{equation}\label{eq:energinorm}
\| v \|_{a_h}^2=a_h(v,v), 
\qquad
 \| v \|_{s_h}^2=\| v \|_{s_{h,F}}^2+\| v \|_{s_{h,\Gamma}}^2,
\qquad
\| v \|_{A_h}^2= \| v \|_{a_h}^2+\| v \|_{s_h}^2. 
\end{equation}
We let $H^s(\omega)$, with $\omega \subset \IR^d$, denote the standard 
Sobolev spaces of order $s$. For $\omega \subset \Gamma$ or 
$\omega \subset \Gamma_h$  $H^s(\omega)$ denotes the surface 
Sobolev space which is based on tangential derivatives.
 
\subsection{Some Inequalities}
\begin{itemize} 
\item Under A1 and A2 there is a constant such that the following 
Poincar\'e inequality holds for $h \in (0,h_0]$, 
\begin{equation}\label{eq:poincare-Kh}
\| v \|_{L^2(\Gamma_h)} \lesssim \| v \|_{a_h} \quad \forall v \in V^p_h,
\end{equation}
see Lemma 4.1 in~\cite{BurHanLar15}.  

\item For $T\in \mcT_h$  we have the trace inequalities
\begin{equation} \label{eq:tracestand}
\| v \|^2_{L^2(\partial T)} \lesssim h^{-1} \| v \|_{L^2(T)}^2 + h \| \nabla v \|^2_{L^2(T)} \qquad v\in H^1(T), 
\end{equation} 
\begin{equation} \label{eq:trace}
\| v \|^2_{L^2(T\cap \Gammah)} \lesssim h^{-1} \| v \|_{L^2(T)}^2 + h \| \nabla v \|^2_{L^2(T)} \qquad v\in H^1(T),
\end{equation} 
where the first inequality is a standard trace inequality, see e.g.~\cite{BrSc}, 
and the second inequality is proven in \cite{HaHaLa04} and the constant 
is independent of how $\Gammah$ intersects $T$ and of $h \in (0,h_0]$. 
\item For $T \in \mcT_h$ we have the inverse inequality
\begin{equation} \label{eq:inverse}
|v|_{H^j(T)}^2\lesssim h^{-2(j-s)}\|v \|_{H^s(T)}^2  \qquad 0\leq s \leq j, \ v\in V_h^p,
\end{equation}
see \cite{BrSc}.
\end{itemize}

\subsection{Interpolation}
Let $\pi_h^p:L^2(\mcT_h) \rightarrow V_h^p$ be the Cl\'{e}ment 
interpolation operator. For all $v\in H^{p+1}(\mcT_h)$ and $T\in
\mcT_h$ the following standard estimate holds
\begin{equation}\label{eq:interpoltets}
\| v - \pi_h^p v \|_{H^m(T)} \lesssim h^{s-m} \| v \|_{H^s(\mcN_h(T))}
\qquad m\leq s \leq p+1, \quad m=0, \cdots, p+1, 
\end{equation}
where $\mcN_h(T)\subset \mcT_h$ is the union of the elements in
$\mcT_h$ which share a node with $T$. In particular, we have the stability estimate 
\begin{equation}\label{eq:stabilityinterpol}
\| \pi_h^p v \|_{H^m(\mcT_h)} \lesssim \| v \|_{H^m(\mcT_h)}.
\end{equation}

\subsection{Some Results for Extended and Lifted Functions}\label{sec:extandlift}

\paragraph{Extension.}
Recall that we define the extension of a function $u$ on $\Gamma$ to 
$U_{\delta_0}(\Gamma)$ by  $u^e(x)=u(p(x))$. For $u \in H^s(\Gamma)$, 
$s \geq 0$ the following holds
\begin{equation}\label{eq:extstab}
\|u^e\|_{H^s(U_{\delta}(\Gamma))}
\lesssim \delta^{1/2}\|u\|_{H^s(\Gamma)}, 
\end{equation}
where $0<\delta \leq \delta_0$ and for $h$ small enough we may take $\delta \sim h$. 
See Lemma 3.1 in \cite{Reu15}.

Using the definition of the extension of a function $u$ on $\Gamma$, the definition 
of the closest point projection \eqref{eq:closestpoint}, the chain rule, and the identity $n=\nabla d$ 
we obtain 
\begin{equation}\label{eq:derext}
\nabla u^e = (\Ps-dH) \nabla u \qquad \text{in $U_{\delta_0}(\Gamma)$},
\end{equation}
where $H(x)=D^2d(x)$. For $x \in U_{\delta_0}(\Gamma)$, the eigenvalues of $H(x)$ 
are $\kappa_1(x)$, $\kappa_2(x)$, and $0$, with corresponding orthonormal eigenvectors 
$a_1(x)$, $a_2(x)$, and $n^e(x)$. We have the identities  
\begin{equation}
\kappa_i(x) = \frac{\kappa_i^e(x)}{1 + d(x)\kappa_i^e(x)},\qquad a_i(x) = a_i^e(x),  \qquad \ i=1,2
\end{equation}
and $H(x)$ may be expressed in the form
\begin{equation}\label{Hform}
H(x) = \sum_{i=1}^2 \frac{\kappa_i^e(x)}{1 + d(x)\kappa_i^e(x)} a_i^e(x) \otimes a_i^e(x),
\end{equation}
see \cite[Lemma 14.7]{GiTr83}. 

Using the fact that $H$ is tangential, $\Ps H=H\Ps=H$, in the identity \eqref{eq:derext} we obtain
\begin{equation}\label{eq:derext2}
\nabla u^e = (I-dH)\nabla_\Gamma u. 
\end{equation}
Thus, 
\begin{equation}\label{eq:tanderext}
\nabla_{\Gamma_h} u^e = B^T\nabla_\Gamma u  
\end{equation}
with 
\begin{equation}\label{Bmap}
B = \Ps(I -  dH)\Psh. 
\end{equation}

\paragraph{Lifting.}
Define the lifting $v^l$ of a function $v$ on $\Gamma_h$ to $\Gamma$ as
\begin{equation}
(v^l)^e = v^l \circ p = v \quad \text{on $\Gamma_h$.}
\end{equation}
Using equation \eqref{eq:tanderext} it follows that
\begin{equation}
\nabla_{\Gamma_h} v = \nabla_{\Gamma_h} (v^l)^e = B^T \nabla_\Gamma (v^l) 
\end{equation}
and thus we have 
\begin{equation}\label{eq:tanderlift}
 \nabla_\Gamma (v^l) = B^{-T} \nabla_{\Gamma_h} v. 
\end{equation}
Here,  
\begin{equation}\label{Binv}
B^{-1} = \Big(I-\frac{n \otimes n_h}{n \cdot n_h}\Big)(I -  dH)^{-1}, 
\end{equation}
see \cite{DeDz07}, and using the fact that $n$ and $n_h$ are unit normals 
it is easy to show that 
\begin{equation}
 B B^{-1} = \Ps, \qquad B^{-1} B = \Psh.
 \end{equation}

\paragraph{Estimates Related to $\boldsymbol{B}$.}
It can be shown, see \cite{BurHanLar15} for instance, that the following estimates hold
\begin{equation}
\label{B-uniform-bounds}
  \| B \|_{L^\infty(\Gamma_h)} \lesssim 1,
 \qquad \| B^{-1} \|_{L^\infty(\Gamma)} \lesssim 1,
 \qquad  \| BB^T-I\|_{L^\infty(\Gamma_h)} \lesssim h^{p+1}.
\end{equation}

\paragraph{Surface Measures.}
For the surface measure on $\Gammah$, $ds_h(x)$, and the surface measure on 
$\Gamma$, $ds(p(x))$, we have the following identity  
\begin{equation}\label{eq:measure}
ds(p(x)) = |B(x)| ds_h(x), \qquad x\in \Gamma_h,
\end{equation}
where 
\begin{equation}
|B(x)|=n \cdot n_h(1-d(x)\kappa_1(x))(1-d(x)\kappa_2(x)),
\end{equation}
see Proposition A.1 in \cite{DeDz07}. Using the identity $\|n(x)-n_h(x)\|_{\IR^3}^2 = 2(1-n(x) \cdot n_h(x))$ 
and our assumptions we obtain the following estimates:
\begin{equation}\label{eq:B-detbound}
\left\| 1 - |B| \right\|_{L^\infty(\Gamma_h)} \lesssim h^{p+1}, 
\qquad \left\| |B| \right\|_{L^\infty(\Gamma_h)} \lesssim 1,
\qquad \left\| |B|^{-1} \right\|_{L^\infty(\Gamma_h)} \lesssim 1.
\end{equation}

\paragraph{Norm Equivalences.}

Using the identities \eqref{eq:tanderext} and \eqref{eq:tanderlift} and the bounds in equations \eqref{B-uniform-bounds} and \eqref{eq:B-detbound} we obtain the following equivalences
\begin{equation}\label{eq:normequ}
\| v^l \|_{L^2(\Gamma)}
\sim \| v \|_{L^2(\Gammah)}, \qquad \| u \|_{L^2(\Gamma)} \sim
\| u^e \|_{L^2(\Gammah)},
\end{equation}
and
\begin{equation}\label{eq:normequgrad}
\| \nabla_\Gamma v^l \|_{L^2(\Gamma)} \sim \| \nabla_{\Gamma_h} v \|_{L^2(\Gammah)},
\qquad \| \nabla_\Gamma u \|_{L^2(\Gamma)} \sim
\| \nabla_{\Gamma_h} u^e \|_{L^2(\Gammah)}.
\end{equation}
We will also use the following bound
\begin{equation}\label{eq:normineqder}
\|D^ju^e\|_{L^2(\Gamma_h)} \lesssim  \|u\|_{H^m(\Gamma)} \qquad u\in H^m(\Gamma), \ j \leq m.
\end{equation} 
The constant in~\eqref{eq:normineqder} depends on the higher-order derivatives of the geometry.

\section{Properties of the Stabilization Term}\label{sec:condonstab}

Here we formulate properties of a general stabilization term that are
sufficient to prove that the resulting linear system of equations have
an optimal scaling of the condition number with the mesh parameter
and that the convergence of the method is of optimal order.

\paragraph{Properties.} Let $s_h$ be a semi positive definite bilinear form such that
\begin{enumerate}

\item[{\bf P1.}] For $v \in H^{p+1}(\Gamma)\cap H_0^1(\Gamma)$,
\begin{equation}\label{eq:property1}
\| v^e- \pi_h^p v^e\|_{s_h} \lesssim h^{p} \| v \|_{H^{p+1}(\Gamma)}. 
 \end{equation}
 
\item[{\bf P2.}] 
For $v \in H^{p+1}(\Gamma)\cap H_0^1(\Gamma)$,
 \begin{equation}\label{eq:property2}
 \|v^e\|_{s_h} \lesssim h^{p}  \| v  \|_{H^{p+1}(\Gamma)}. 
 \end{equation}

\item[{\bf P3.}]  For $v \in H^2(\Gamma)$, 
\begin{equation}\label{eq:property3}
\|\pi_h^p v^e\|_{s_h} \lesssim  h\|v \|_{H^{2}(\Gamma)}.
\end{equation}

\item[{\bf P4.}] For $ v \in V_h^p$, 
\begin{equation}
\| v \|_{s_h}^2 \lesssim h^{-3}\| v \|_{L^2(\mcT_h)}^2.
\end{equation}
 
\item[{\bf P5.}]  For $v \in V_h^p$, 
\begin{equation}\label{eq:property4}
\| v \|_{L^2(\mcT_h)}^2 \lesssim h (\| v \|_{a_h}^2+ \| v \|_{s_h}^2 ).
\end{equation}
\end{enumerate}
\begin{rem} The first three properties are used to establish the a priori error estimates: 
P1 is needed to show optimal interpolation error estimates, P2 is used to estimate the consistency error, and P3 is used to show optimal $L^2$-error estimates. In P3 the assumption that $v \in H^2(\Gamma)$ reflects the fact that the solution to the dual problem resides in $H^2(\Gamma)$. The two last properties are used to show the condition number estimate: P4 is the inverse inequality and P5 is the Poincar\'e inequality.  For the extension of these properties to the more general case of an $n$-dimensional smooth manifold embedded in $\IR^d$,  with codimension $cd = d - n$ see Section~\ref{sec:codim}.
\end{rem}

\paragraph{Verification of P1--P5.} 
We now show that the stabilization term defined in (\ref{eq:sh}) satisfies P1-P5. 

\begin{enumerate}
\item[{\bf P1.}] Using a standard trace inequality on the faces (equation \eqref{eq:tracestand}) and the trace inequality 
\eqref{eq:trace} for the contributions on $\Gamma_h$ we obtain  
\begin{align}
\| w \|^2_{s_h} 
&=
\sum_{j=1}^{p} \sum_{F\in \mcF_h} c_{F,j} h^{2(j-1+\gamma)} \|[D_{n_F}^{j} w ]\|^2_{L^2(F)}
\\ \nonumber
&\qquad +\sum_{j=1}^p \sum_{K \in \mcK_h}   c_{\Gamma,j}  h^{2(j-1 + \gamma)} \| D_{n_h}^j w \|^2_{L^2(K)}
 \\
&\lesssim 
\sum_{j=1}^p \sum_{T\in \mcT_h} h^{2(j-1 + \gamma) } \left(h^{-1}\|D^j w \|^2_{L^2(T)}+h\|D^{j+1} w \|^2_{L^2(T)} \right)
 \\ 
&\lesssim 
\sum_{j=0}^p \sum_{T\in \mcT_h} h^{2(j+\gamma)-1} \|D^{j+1} w \|^2_{L^2(T)} 
\\
&=
\sum_{j=0}^p  h^{2(j+\gamma)-1} \|D^{j+1} w \|^2_{L^2(\mcT_h)}. 
 \label{eq:traceonstab}
\end{align}  
Next, setting $w = v^e - \pi_h^p v^e$, using the interpolation error estimate 
(\ref{eq:interpoltets}) and the stability \eqref{eq:extstab}  of the extension operator 
with $\delta \sim h$, we obtain 
\begin{align}
\| v^e - \pi_h^p v^e \|^2_{s_h} 
&\lesssim 
\sum_{j=0}^p  h^{2(j+\gamma)-1} \|D^{j+1} (v^e - \pi_h^p v^e ) \|^2_{L^2(\mcT_h)} 
\\ 
&\lesssim 
\sum_{j=0}^p  h^{2(j+\gamma)-1} h^{2(p-j)} \|D^{j+1} v^e \|^2_{H^{p+1} (\mcT_h)} 
\\ 
& \lesssim 
 h^{2(p + \gamma)}  \| v \|^2_{H^{p+1}(\Gamma)}.
\end{align}
Finally, $h^{2(p+\gamma)} \lesssim h^{2p}$ for $\gamma \in [0,1]$ since $h\in (0,h_0]$, 
and thus P1 follows.

\item[{\bf P2.}] 
Note first that for $v \in H^{p+1}(\Gamma)$ we have $\|v^e\|_{s_{h,F}}=0$ and thus 
$\| v^e \|_{s_h} = \| v^e \|_{s_{h,\Gamma}}$. Next, subtracting  $D_{n}^j v^e=0$, using 
Assumption A1, and inequality \eqref{eq:normineqder} we obtain
\begin{align}
\|v^e\|^2_{s_h}&=\sum_{j=1}^p c_{\Gamma,j}  h^{2(j-1+\gamma)} \|D_{n_h}^j v^e\|^2_{\mcK_h}
\\
&\lesssim 
\sum_{j=1}^p h^{2(j-1+\gamma)} \|(D_{n_h}^j-D_{n}^j) v^e \|^2_{L^2(\mcK_h)} 
\\
& \lesssim \sum_{j=1}^p  h^{2(j-1+\gamma)} \|n_h-n\|^2_{L^\infty(\mcK_h)}\|D^j v^e\|^2_{L^2(\mcK_h)}
\\ 
&\lesssim 
h^{2(p+\gamma)} \|v\|^2_{H^p(\Gamma)}, 
\end{align}
where we used the estimate $\|n_h-n\|_{L^\infty(\mcK_h)}\lesssim h^p$. We conclude that 
P2 holds for $\gamma\in [0,1]$ since  $h \in (0,h_0]$. 

\item[{\bf P3.}]  For $w\in V_h^p$ we have the estimate 
\begin{equation}\label{eq:prop3a}
\| w \|^2_{s_h} \lesssim 
h^{2\gamma} \|[D_{n_F}^1 w  ]\|^2_{L^2(\mcF_h)}
+
h^{2\gamma} \| D_{n_h}^1 w  \|^2_{L^2(\mcK_h)}
+
h^{2(1+\gamma)-1} \| D^2 w \|^2_{L^2(\mcT_h)}.
\end{equation}
\paragraph{Verification of (\ref{eq:prop3a}).} We start from the definition
\begin{equation}
\| w \|^2_{s_h} = \| w \|^2_{s_{h,F}} + \| w \|^2_{s_{h,\Gamma}}
\end{equation}
and estimate the two terms on the right hand side as follows. First
\begin{align}
\| w \|^2_{s_{h,F}} 
&\lesssim h^{2\gamma} \|[D_{n_F}^1 w ]\|^2_{L^2(\mcF_h)} 
+  \sum_{j=2}^{p} h^{2(j-1+\gamma)}     \|[D_{n_F}^{j} w ]\|^2_{L^2(\mcF_h)}
\\
&\lesssim h^{2\gamma} \|[D_{n_F}^1 w ]\|^2_{L^2(\mcF_h)}      
+  h^{2(1+\gamma)-1} \|[D^{2} w ]\|^2_{L^2(\mcT_h)},
\end{align}
where for each $j=2,\dots,p$ and each face $F \in \mcF_h$ with 
neighboring elements $T_1$ and $T_2$ we used the inverse estimate      
\begin{equation}
 \|[D_{n_F}^{j} w ]\|^2_{L^2(F)} \lesssim  h^{-1}  \|D^{j} w\|^2_{L^2(T_1 \cup T_2)} 
\lesssim  h^{-1}  h^{2(2-j)}\|D^{2} w\|^2_{L^2(T_1 \cup T_2)}. 
\end{equation}
Second, using a similar approach
\begin{align}
\| w \|^2_{s_{h,\Gamma}} 
&\lesssim h^{2\gamma} \| D_{n_h}^1 w \|^2_{L^2(\mcK_h)}  
+ \sum_{j=2}^p  h^{2(j-1 + \gamma)} \| D_{n_h}^j w \|^2_{L^2(\mcK_h)}
\\
 &\lesssim h^{2\gamma} \| D_{n_h}^1 w \|^2_{L^2(\mcK_h)}  
+ \sum_{j=2}^p  \underbrace{h^{2(j-1 + \gamma)} h^{-1}  h^{2(2-j)}}_{=h^{2(1+\gamma)-1}} 
\| D^2 w \|^2_{L^2(\mcT_h)}
\\
&\lesssim 
 h^{2\gamma} \| D_{n_h}^1 w \|^2_{L^2(\mcK_h)}  
 +  
h^{2(1+\gamma)-1} \| D^2 w \|^2_{L^2(\mcT_h)},
\end{align}
where we used the inverse estimate 
\begin{equation}
 \| D_{n_h}^j w \|^2_{L^2(K)} 
 \leq
 \| D^j  w \|^2_{L^2(K)}  
\lesssim 
  h^{-1}\| D^j w \|^2_{L^2(T)} 
 \lesssim 
  h^{-1} h^{2(2-j)} \| D^2 w \|^2_{L^2(T)} 
\end{equation}
for $j=2,\dots,p$. Thus (\ref{eq:prop3a}) holds.

Setting $w = \pi_h^p v^e$  in (\ref{eq:prop3a})  we obtain 
\begin{align}\label{eq:prop3b}
\| \pi_h^p v^e  \|^2_{s_h} &\lesssim 
\underbrace{h^{2\gamma} \|[D_{n_F}^1 \pi_h^p v^e  ]\|^2_{L^2(\mcF_h)}}_{I}
 +
\underbrace{h^{2\gamma} \| D_{n_h}^1 \pi_h^p v^e  \|^2_{L^2(\mcK_h)}}_{II} 
\\ 
\nonumber & \qquad  
 + 
 h^{2(1+\gamma)-1} \| D^2 \pi_h^p v^e \|^2_{L^2(\mcT_h)}
 \\
&= I + II +  h^{2(1+\gamma)} \| v \|^2_{H^2(\Gamma)},
\end{align}
where we used the stability of the interpolation operator
\eqref{eq:stabilityinterpol} and the stability of the extension
operator \eqref{eq:extstab}. Next we have the following bounds.

\paragraph{Term $\bfI$.} Using the fact that $[D_{n_F}^1 v^e] = 0$ for $v\in H^2(\Gamma)$ we 
obtain
\begin{align}
\|[D_{n_F}^1 \pi_h^p v^e  ]\|^2_{L^2(\mcF_h)} &= \|[D_{n_F}^1 (\pi_h^p v^e - v^e)  ]\|^2_{L^2(\mcF_h)}
\\
&\lesssim 
h^{-1} \|[D (\pi_h^p v^e - v^e)  ]\|^2_{L^2(\mcT_h)}
+ h \|[D^2  (\pi_h^p v^e - v^e)  ]\|^2_{L^2(\mcT_h)}
\\
&\lesssim 
h \| D^2 v^e \|^2_{L^2(\mcT_h)}
\\
&\lesssim 
h^2 \| v \|^2_{H^2(\Gamma)},
\end{align}
where we used the trace inequality (\ref{eq:tracestand}), the interpolation estimate (\ref{eq:interpoltets}), 
and finally the stability (\ref{eq:extstab}) of the extension operator. Thus we have
\begin{equation}
I \lesssim h^{2(1+\gamma)} \| v \|^2_{H^2(\Gamma)}.
\end{equation}

\paragraph{Term $\bfI\bfI$.} Using the fact that $D_{n^e}^1 v^e = 0$ we obtain
\begin{align}
\| D_{n_h}^1 \pi_h^p v^e  \|^2_{L^2(\mcK_h)}   
&\lesssim 
\| D_{n^e}^1 \pi_h^p v^e  \|^2_{L^2(\mcK_h)} + \| n^e - n_h \|^2_{L^\infty(\mcKh)} \| D \pi_h^p v^e \|^2_{L^2(\mcKh)}
\\
&\lesssim 
\| D_{n^e}^1 (\pi_h^p v^e - v^e ) \|^2_{L^2(\mcK_h)} + h^{2p} \| D \pi_h^p v^e \|^2_{L^2(\mcTh)}
\\
&\lesssim 
h^2 \| v \|^2_{H^2(\Gamma)} + h^{2p} \| v \|^2_{H^1(\Gamma)}
\end{align}
and we arrive at 
\begin{equation}
II \lesssim h^{2(1+\gamma)} \| v \|^2_{H^2(\Gamma)}
\end{equation}
since $p\geq 1$ and $h \in (0,h_0]$.

Collecting the bounds we obtain
\begin{equation}
\| \pi_h^p v^e \|_{s_h} 
\lesssim 
h^{1+\gamma}\| v \|_{H^2(\Gamma)}
\end{equation}
and thus P3 holds.

\paragraph{Simplified Proof of P3 in the Case $\boldsymbol{\gamma}{\bf=1}$.}
Using estimate (\ref{eq:traceonstab}) with $w\in V^p_h$ and inverse inequality 
\eqref{eq:inverse} we obtain 
\begin{align}
\| w \|^2_{s_h} 
&\lesssim 
\sum_{j=0}^p  h^{2j +1} \|D^{j+1} w  \|^2_{L^2(\mcT_h)}
\\
&\lesssim  
h \|D w \|^2_{L^2(\mcT_h)} 
+  \underbrace{\sum_{j=1}^p  h^{2j+1} \|D^{j+1} w \|^2_{L^2(\mcT_h)}}_{\lesssim h^3 \| D^2 w \|^2_{L^2(\mcTh)} } 
\\
&\lesssim  
h \|D w \|^2_{L^2(\mcT_h)} + h^3 \| D^2 w \|^2_{L^2(\mcTh)}.
\end{align}
Setting $w = \pi_h^p v^e$ and using the stability of the interpolation operator
\eqref{eq:stabilityinterpol} and the stability of the extension
operator \eqref{eq:extstab} we obtain 
\begin{align}
\| \pi_h^p v^e \|^2_{s_h} 
&\lesssim 
h \|D \pi_h^p v^e  \|^2_{L^2(\mcT_h)} + h^3 \| D^2 \pi_h^p v^e  \|^2_{L^2(\mcTh)}
\\
&\lesssim 
h^2 \| v \|^2_{H^1(\Gamma)} + h^4 \| v  \|^2_{H^2(\Gamma)}.
\end{align}
\item[{\bf P4.}] Starting from (\ref{eq:traceonstab}) with $w \in V^p_h$ and using the inverse 
inequality we  get
\begin{align}\label{eq:inversestab}
\| w \|_{s_h}^2 
&\lesssim 
\sum_{j=0}^p h^{2j+2\gamma -1} \|D^{j+1} w \|^2_{L^2(\mcT_h)} 
\lesssim 
h^{2\gamma-3} \| w \|^2_{L^2(\mcT_h)}. 
\end{align}
Using that $h \in (0,h_0]$ we have $h^{2\gamma-3} \lesssim h^{-3}$ for $\gamma \geq 0$ and thus Property P4 holds. 

\item[{\bf P5.}]
See the proof of Lemma \ref{lem:Poincare-Th} below.
\end{enumerate}

\begin{rem} We note that the normal gradient stabilization (\ref{eq:normalgradel}) satisfies 
$P_1,P_2,P_4$, and $P_5$, see \cite{GraLehReu16} and \cite{BurHanLarMas16}. To verify $P_3$ we use that $n^e \cdot \nabla v^e=0$, interpolation error estimates, the $H^1$-stability of the interpolant, assumption A2 (the inclusion $\mcT_h \subset U_\delta (\Gamma )$ with $\delta \sim h$), the stability of the extension operator \eqref{eq:extstab}, and that $\|  n_h -n^e \|^2_{L^\infty(\mcT_h)} \lesssim h^{2p}$ to conclude that
\begin{align}
\| \pi_h^p v^e\|_{s_h}^2 
& \lesssim 
 h^{\alpha} \left(
 \|n^e \cdot  \nabla \pi_h^p v^e  \|^2_{L^2(\mcT_h)} + \|  n_h -n^e \|^2_{L^\infty(\mcT_h)} \|  \nabla \pi_h^p v^e \|^2_{L^2(\mcTh)} \right)
 \\
&\lesssim 
 h^{\alpha} \left (\| n^e \cdot \nabla(\pi_h^p v^e-v^e)\|_{L^2(\mcT_h)}^2+ h^{2p} \delta \| v \|_{H^1(\Gamma)}^2 \right)
 \lesssim 
h^{\alpha} h^2 \delta \| v \|^2_{H^2(\Gamma)}   \\
&\lesssim 
 h^{3+\alpha}\| v \|^2_{H^2(\Gamma)}.
\end{align}
We also used that $p\geq 1$. Thus we conclude that $P_3$ holds for $\alpha\geq -1$.
\end{rem}

\section{Condition Number Estimate} \label{sec:conditionnr}

We shall show that the spectral condition number of the resulting stiffness matrix scales 
as $h^{-2}$, independent of the position of the geometry relative to the background mesh. 
In particular, we show that the stabilization term defined in (\ref{eq:sh}) has Property P5 and 
controls the condition number for linear as well as for higher-order elements.

Let $\{\varphi_i\}_{i=1}^N$ be a basis in $V_h^p$ and given $v \in V_h^p$ let
$\widehat{v} \in \widehat{\IR}^N\subset \IR^N $ denote the vector containing the coefficients 
in the expansion: 
\begin{equation} 
v = \sum_{i=1}^N \widehat{v}_i \varphi_i.
\end{equation}
Recall that functions $v$ in $V_h^p$ satisfy the condition $\int_{\Gamma_h} v \,ds_h=0$ 
and therefore $\widehat{\IR}^N$ is 
\begin{equation}
\widehat{\IR}^N=\{\widehat{v} \in \IR^N | \ \widehat{v} \cdot ( \int_{\Gamma_h}\varphi_1 \,ds_h, \cdots, \int_{\Gamma_h}\varphi_N \,ds_h)=0 \}.
\end{equation}
Since $\mcT_h$ is quasi-uniform we have the equivalence 
\begin{equation}\label{eq:eql2normdisc}
\| v \|_{\mcT_h} \sim h^{d/2}\| \widehat{v} \|_{\widehat{\IR}^N},
\end{equation}
where $d$ is the dimension of the embedding space $\IR^d$. Let $\mcA_h$ 
be the stiffness matrix associated with $A_h$, 
\begin{equation}
(\mcA_h \widehat{w},\widehat{v})_{\widehat{\IR}^N} =A_h(w,v)\qquad \forall v,w \in V_h^p
\end{equation} 
and recall that the condition number is defined by 
\begin{equation}
\kappa(\mcA_h) = \| \mcA_h \|_{\IR^N} \| \mcA_h^{-1} \|_{\IR^N}
\end{equation}
which in terms of the eigenvalues of the symmetric positive definite matrix $\mcA_h$ 
is equal to 
\begin{equation}\label{eq:spectral-cond-number}
\kappa(\mcA_h) = \frac{\lambda_N}{\lambda_1}, 
\end{equation}
where $\lambda_N$ ($\lambda_1$) is the largest (smallest) eigenvalue of $\mcA_h$.

\begin{thm}\label{thm:condition} If there are constants, independent of the mesh 
size $h$ and of how the surface cuts the background mesh, such that the following 
hold:
\begin{itemize}
\item $A_h$ is continuous:
$A_h(w,v) \lesssim  \| w \|_{A_h} \| v \|_{A_h} \qquad \forall v, w \in V_h^p + H^{p+1}(\mcT_h)$ 
\item $A_h$ is coercive:
$\|v \|_{A_h}^2 \lesssim A_h(v,v) \qquad \forall v \in V_h^p$ 
\item The inverse inequality: 
$\| v \|_{A_h} \lesssim h^{-3/2} \| v \|_{L^2(\mcT_h)} \qquad v \in V_h^p$
\item The Poincar\'e inequality: 
$\| v \|_{L^2(\mcT_h)} \lesssim h^{1/2} \| v \|_{A_h} \qquad v \in V_h^p$
\end{itemize}
Then, the spectral condition number $\kappa(\mcA_h)$ satisfies
\begin{equation}
\kappa(\mcA_h) \lesssim h^{-2}.
\end{equation}
\end{thm}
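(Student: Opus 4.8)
The plan is to bound the two extreme eigenvalues $\lambda_1$ and $\lambda_N$ of the symmetric positive definite matrix $\mcA_h$ separately and then to form their ratio via \eqref{eq:spectral-cond-number}. Coercivity and continuity of $A_h$ on $V_h^p$ guarantee that $\mcA_h$ is symmetric positive definite on $\widehat{\IR}^N$, so both eigenvalues admit the Rayleigh quotient characterization
\begin{equation}
\lambda_N = \max_{\widehat{v}\in\widehat{\IR}^N} \frac{A_h(v,v)}{\|\widehat{v}\|^2_{\widehat{\IR}^N}}, \qquad
\lambda_1 = \min_{\widehat{v}\in\widehat{\IR}^N} \frac{A_h(v,v)}{\|\widehat{v}\|^2_{\widehat{\IR}^N}},
\end{equation}
where $v = \sum_{i=1}^N \widehat{v}_i \varphi_i \in V_h^p$. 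The only analytic ingredient beyond the four hypotheses is the discrete norm equivalence \eqref{eq:eql2normdisc}, namely $\|v\|_{\mcT_h} \sim h^{d/2}\|\widehat{v}\|_{\widehat{\IR}^N}$ with $d=3$, which is used to pass between the coefficient norm and $\|v\|_{L^2(\mcT_h)}$.

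For the upper bound I would start from continuity with $w=v$, giving $A_h(v,v)\lesssim\|v\|_{A_h}^2$, then invoke the inverse inequality $\|v\|_{A_h}\lesssim h^{-3/2}\|v\|_{L^2(\mcT_h)}$, and finally \eqref{eq:eql2normdisc}:
\begin{equation}
A_h(v,v) \lesssim \|v\|_{A_h}^2 \lesssim h^{-3}\|v\|_{L^2(\mcT_h)}^2 \sim h^{-3}h^{d}\|\widehat{v}\|^2_{\widehat{\IR}^N} = \|\widehat{v}\|^2_{\widehat{\IR}^N},
\end{equation}
so that $\lambda_N \lesssim 1$. For the lower bound I would run the same chain in the opposite direction, now using the Poincar\'e inequality $\|v\|_{L^2(\mcT_h)}\lesssim h^{1/2}\|v\|_{A_h}$ together with coercivity $\|v\|_{A_h}^2\lesssim A_h(v,v)$ to get $\|v\|_{L^2(\mcT_h)}^2 \lesssim h\,A_h(v,v)$; combining this with \eqref{eq:eql2normdisc} yields
\begin{equation}
h^{d}\|\widehat{v}\|^2_{\widehat{\IR}^N} \sim \|v\|_{L^2(\mcT_h)}^2 \lesssim h\,A_h(v,v),
\end{equation}
that is $A_h(v,v) \gtrsim h^{2}\|\widehat{v}\|^2_{\widehat{\IR}^N}$ and hence $\lambda_1 \gtrsim h^{2}$. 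Multiplying the two bounds gives $\kappa(\mcA_h) = \lambda_N/\lambda_1 \lesssim h^{-2}$.

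This is a standard Rayleigh quotient estimate, so no single step is genuinely hard; the points that demand care are the bookkeeping of the powers of $h$ and the role of the mean-zero constraint. The embedding dimension enters through the exponent $d/2 = 3/2$ in \eqref{eq:eql2normdisc}, and it is exactly the matching of $h^{-3}$ from the inverse inequality, $h$ from Poincar\'e, and $h^{d}=h^{3}$ from the mass-matrix scaling that produces the clean $h^{-2}$. The subtlety I would emphasize is that the minimum defining $\lambda_1$ is taken over the mean-zero subspace $\widehat{\IR}^N$: the Poincar\'e hypothesis holds only on $V_h^p$, whose constraint $\int_{\Gamma_h} v\,ds_h = 0$ is precisely what restricts the admissible coefficient vectors to $\widehat{\IR}^N$ and excludes the constant near-null mode that would otherwise destroy the lower bound on $\lambda_1$.
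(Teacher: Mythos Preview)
Your proposal is correct and follows essentially the same approach as the paper: bound $A_h(v,v)$ above via continuity plus the inverse inequality and below via coercivity plus the Poincar\'e inequality, convert to coefficient norms using \eqref{eq:eql2normdisc}, and read off $\lambda_N\lesssim h^{d-3}$, $\lambda_1\gtrsim h^{d-1}$, hence $\kappa\lesssim h^{-2}$. The paper keeps $d$ generic in the intermediate steps whereas you set $d=3$ early, and your closing remark on the role of the mean-zero constraint for $\lambda_1$ is a helpful clarification not made explicit in the paper.
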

\begin{proof}
For $v, \ w \in V_h^p$ it follows from the continuity of the form $A_h$, the inverse inequality, 
and the equivalence between the norms, equation \eqref{eq:eql2normdisc}, that
\begin{equation}
A_h(w,v)\lesssim \|w\|_{A_h}\|v\|_{A_h}\lesssim h^{-3} \|w\|_{L^2(\mcT_h)}\|v\|_{L^2(\mcT_h)}\lesssim h^{-3} h^{d}  \| \widehat{w} \|_{\widehat{\IR}^N}\| \widehat{v} \|_{\widehat{\IR}^N}.
\end{equation}
From coercivity, the Poincar\'e inequality, and the equivalence between the norms, equation \eqref{eq:eql2normdisc}, we obtain
\begin{equation}
A_h(v,v) \gtrsim \|v\|_{A_h}^2\gtrsim h^{-1} \|v\|_{L^2(\mcT_h)}^2\gtrsim h^{-1} h^{d}  \| \widehat{v} \|_{\widehat{\IR}^N}^2.
\end{equation}
Using the definition (\ref{eq:spectral-cond-number})  of the spectral condition number 
we get
\begin{equation}
\kappa(\mcA_h)=\frac{\max_{u\in \widehat{\IR}^N,\| \widehat{u} \|_{\IR^N}=1} (\mcA_h \widehat{u},\widehat{u})_{\widehat{\IR}^N}}{\min_{u\in \widehat{\IR}^N,\| \widehat{u} \|_{\IR^N}=1} (\mcA_h \widehat{u},\widehat{u})_{\widehat{\IR}^N}} \lesssim \frac{h^{-3} h^{d}}{h^{-1} h^{d}}=h^{-2}.
\end{equation}
\end{proof}

In the following Lemmas we show that all the conditions in Theorem \ref{thm:condition}  are satisfied. 

\begin{lem} ({Continuity and Inf-Sup Condition}) There is a constant independent of the mesh size $h$ and of how the surface 
cuts the background mesh, such that $A_h$ is continuous: 
\begin{equation}\label{eq:Ah-cont}
A_h(w,v) \leq  \| w \|_{A_h} \| v \|_{A_h}, \quad  \forall v, w\in V^p_h + H^{p+1}(\mcT_h)
\end{equation}
and $A_h$ satisfies the inf-sup condition 
\begin{equation}\label{eq:inf-sup}
\| w \|_{A_h} \lesssim \sup_{v \in V_h^p \setminus  \{0\}} \frac{A_h(w,v)}{\| v \|_{A_h}}, \quad \forall w\in V_h^p.
\end{equation}
\end{lem}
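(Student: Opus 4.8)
The plan is to prove continuity by a twofold application of the Cauchy--Schwarz inequality and to obtain the inf-sup condition as an immediate consequence of the symmetry of $A_h$ together with the definition of the energy norm.

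First I would note that both $a_h$ and $s_h$ are symmetric positive semidefinite forms on $V_h^p + H^{p+1}(\mcT_h)$, so the Cauchy--Schwarz inequality applies to each associated seminorm, yielding $a_h(w,v) \leq \| w \|_{a_h}\| v \|_{a_h}$ and $s_h(w,v) \leq \| w \|_{s_h}\| v \|_{s_h}$. Summing these and then applying the two-term Cauchy--Schwarz inequality in $\IR^2$ to the vectors $(\| w \|_{a_h}, \| w \|_{s_h})$ and $(\| v \|_{a_h}, \| v \|_{s_h})$ gives
\begin{equation*}
A_h(w,v) = a_h(w,v) + s_h(w,v) \leq \| w \|_{a_h}\| v \|_{a_h} + \| w \|_{s_h}\| v \|_{s_h} \leq \| w \|_{A_h}\| v \|_{A_h},
\end{equation*}
where the last step invokes the definition $\| v \|_{A_h}^2 = \| v \|_{a_h}^2 + \| v \|_{s_h}^2$. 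This establishes \eqref{eq:Ah-cont} with constant one, and the argument is valid on the full space $V_h^p + H^{p+1}(\mcT_h)$ since all three seminorms are defined there.

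For the inf-sup condition I would exploit that $A_h$ is symmetric and that, directly from the definition of the norm, $A_h(w,w) = \| w \|_{a_h}^2 + \| w \|_{s_h}^2 = \| w \|_{A_h}^2$. Consequently, for $w \in V_h^p$ with $\| w \|_{A_h} \neq 0$, the admissible choice of test function $v = w \in V_h^p$ gives
\begin{equation*}
\sup_{v \in V_h^p \setminus \{0\}} \frac{A_h(w,v)}{\| v \|_{A_h}} \geq \frac{A_h(w,w)}{\| w \|_{A_h}} = \| w \|_{A_h},
\end{equation*}
so \eqref{eq:inf-sup} holds with constant one; when $\| w \|_{A_h}=0$ the estimate is trivial because continuity forces the supremum to vanish as well.

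I do not expect any genuine obstacle here: both assertions reduce to Cauchy--Schwarz and the symmetry of the form. The only point meriting a moment's care is that $\| \cdot \|_{A_h}$ is a bona fide norm on $V_h^p$, and not merely a seminorm, so that the denominator in the supremum is nonzero for $w \neq 0$; this is furnished by the Poincar\'e inequality P5, which gives $\| v \|_{L^2(\mcT_h)}^2 \lesssim h \| v \|_{A_h}^2$ and hence $\| w \|_{A_h}=0 \Rightarrow w=0$.
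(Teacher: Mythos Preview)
Your proof is correct and follows essentially the same approach as the paper: continuity via Cauchy--Schwarz (you just spell out the two-step application more explicitly), and the inf-sup condition from the coercivity identity $A_h(w,w)=\|w\|_{A_h}^2$ by testing with $v=w$. Your closing remark that $\|\cdot\|_{A_h}$ is a genuine norm on $V_h^p$ via P5 is a nice additional observation not made explicit in the paper.
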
 
\begin{proof}
The continuity follows directly from the Cauchy-Schwarz inequality. The bilinear form $A_h$ is 
coercive $A_h(v,v)=\|v \|_{A_h}^2$ by definition and the inf-sup condition \eqref{eq:inf-sup} follows 
from coercivity.
\end{proof}

\begin{lem} ({Inverse Inequality}) There are constants, independent of the mesh size $h$ and of how the surface cuts 
the background mesh, such that the following inverse inequality holds
\begin{equation}\label{eq:inverseA}
\| v \|_{A_h} \lesssim h^{-3/2} \| v \|_{L^2(\mcT_h)} \qquad v \in V_h^p.
\end{equation} 
\end{lem} 
\begin{proof} 
Using the element wise trace inequality \eqref{eq:trace}, the inverse inequality \eqref{eq:inverse}, and Property P4 of the stabilization term we obtain
\begin{align}
\|v \|_{a_h}^2 &\lesssim h^{-1} \| v\|^2_{H^1(\mcT_h)} +h\| v\|^2_{H^2(\mcT_h)} \lesssim h^{-3} \| v\|_{L^2(\mcT_h)}^2, \nonumber \\
\|v \|_{s_h}^2 &\lesssim h^{-3} \| v \|_{L^2(\mcT_h)}^2 \label{eq:inversesh}.
\end{align}
Using the above estimates and recalling the definition of $\| v \|_{A_h}$ the result follows. 
\end{proof}

\begin{lem} ({Poincar\'e Inequality}) There are constants, independent of the mesh size $h$ and of how the surface cuts the background mesh, such that the following Poincar\'e inequality holds \label{lem:Poincare-Th}
\begin{equation}\label{eq:poincare-Th}
\| v \|_{L^2(\mcT_h)} \lesssim h^{1/2} \| v \|_{A_h} \qquad v \in V_h^p.
\end{equation}
\end{lem}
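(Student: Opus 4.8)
The plan is to establish a per-element bound and sum it over $\mcTh$, following the two-step strategy announced in the introduction: a \emph{face transfer} estimate governed by $s_{h,F}$ lets one move the $L^2$ mass from one element to a face-neighbour, while a \emph{fat intersection} estimate governed by $s_{h,\Gamma}$ controls an element whose cut $K=T\cap\Gammah$ is large, $|K|\gtrsim h^2$; the surface Poincar\'e inequality \eqref{eq:poincare-Kh} then closes the argument.

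First I would treat a fat element $T$, i.e.\ one with $|T\cap\Gammah|\gtrsim h^2$. Since $v|_T$ is a polynomial of degree $\le p$, expanding it in a Taylor series along the discrete normal $n_h$ issuing from $K$ terminates after $p+1$ terms, whose coefficients are precisely the normal derivatives $D_{n_h}^j v$ on $K$. Integrating over the normal tube over $K$, of thickness $\sim h$, and using $|s|\lesssim h$ for the normal coordinate, gives
\begin{equation}
\|v\|_{L^2(T)}^2 \lesssim h\|v\|_{L^2(K)}^2 + \sum_{j=1}^p h^{2j+1}\|D_{n_h}^j v\|_{L^2(K)}^2 .
\end{equation}
Because $2j+1\ge 2(j-1+\gamma)+1$ for $\gamma\in[0,1]$ and $h\le h_0$, the normal-derivative terms are bounded by $h$ times the contribution of $K$ to $\|v\|_{s_{h,\Gamma}}^2$.

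Next, for two elements $T_1,T_2$ sharing a face $F\in\mcFh$, comparing the polynomials $v|_{T_1}$ and $v|_{T_2}$ through the jumps of their normal derivatives across $F$ yields the face transfer estimate
\begin{equation}
\|v\|_{L^2(T_1)}^2 \lesssim \|v\|_{L^2(T_2)}^2 + \sum_{j=1}^p h^{2j+1}\|[D_{n_F}^j v]\|_{L^2(F)}^2 ,
\end{equation}
and by the same power counting the extra term is $\lesssim h$ times the contribution of $F$ to $\|v\|_{s_{h,F}}^2$. Invoking shape regularity, from any $T\in\mcTh$ one reaches a fat element $T^\ast$ in a uniformly bounded number of face-neighbour steps (see \cite{BurHanLar15} and \cite{DemOls12}); chaining the transfer estimate along this path and applying the fat-intersection bound at $T^\ast$ controls $\|v\|_{L^2(T)}^2$ by $h\|v\|_{L^2(K^\ast)}^2$ plus $h$ times the path contributions to $\|v\|_{s_h}^2$. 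Summing over all $T$, with the bounded overlap guaranteeing that each face and each fat element is charged $O(1)$ times, gives
\begin{equation}
\|v\|_{L^2(\mcTh)}^2 \lesssim h\|v\|_{L^2(\Gammah)}^2 + h\|v\|_{s_h}^2 \lesssim h\|v\|_{a_h}^2 + h\|v\|_{s_h}^2 = h\|v\|_{A_h}^2 ,
\end{equation}
where the surface Poincar\'e inequality \eqref{eq:poincare-Kh} was used in the middle step. This is \eqref{eq:poincare-Th}.

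The hard part is the fat-intersection estimate with a constant that is \emph{independent of how $\Gammah$ cuts $T$}. The Taylor argument only controls the normal tube over $K$, so one must know that $|K|\gtrsim h^2$ together with the uniform curvature bound on $\Gammah$ from A1--A2 forces this tube to be a nondegenerate portion of $T$ — essentially, to contain a ball of radius $\sim h$ — so that the finite-dimensional equivalence of norms on the degree-$p$ polynomials transfers the bound to all of $T$ with a uniform constant. The companion combinatorial fact, that the number of walking steps and the overlap multiplicities are bounded uniformly in $h$ and in the cut position, is the other ingredient one must import from the shape-regularity analysis of \cite{BurHanLar15, DemOls12}.
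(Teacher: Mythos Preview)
Your proposal is correct and follows essentially the same approach as the paper: the paper organises the argument via a covering $\{\mcT_{h,x}\}$ of $\mcTh$ with the properties you invoke (bounded cardinality, face connectivity, one large-intersection element per set, bounded overlap), proves your face-transfer estimate and your fat-intersection estimate, and combines them with \eqref{eq:poincare-Kh} exactly as you describe. The only place the paper is more explicit is in the ``hard part'' you flag: rather than asserting that the normal tube over $K_x$ contains a ball of radius $\sim h$, the paper constructs two concentric straight cylinders with circular cross section --- one contained in the curved tube $\text{Cyl}_\delta(K_x)$ and one containing $T_x$ --- and transfers the $L^2$ norm between them by mapping to a reference configuration; this is the precise geometric verification of the norm equivalence you invoke.
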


\begin{proof}  To prove the Poincar\'e inequality we will proceed in two main steps: 1. We use the 
face penalty to reach an element which has a large intersection with $\Gamma_h$. Here 
we employ a covering of $\mcT_h$, where each covering set contains elements that have a 
so called large intersection property, see~\cite{BurHanLar15}. 2. For elements with a large intersection 
the normal derivative control on $\Gamma_h$ is used to control the $L^2$ norm on the element. 

To establish the Poincar\'e inequality  (\ref{eq:poincare-Th}) we shall prove 
that \begin{equation}\label{eq:main-a}
\| v \|^2_{L^2(\mcT_h)} 
\lesssim {\sum_{j =0}^p  h^{2j+1}\| D^j_{n_h}  v \|^2_{L^2(\Gamma_h)} + \sum_{j=1}^p h^{2j+1} \| [ D_{n_F}^j v ] \|^2_{L^2(\mcF_h)}}.
\end{equation} 
Here we note that (\ref{eq:poincare-Th}) follows from (\ref{eq:main-a}) since the 
right hand side of (\ref{eq:main-a}) satisfies the estimate
\begin{align}\nonumber
&{\sum_{j =0}^p  h^{2j+1}\| D^j_{n_h}  v \|^2_{L^2(\Gamma_h)} + \sum_{j=1}^p h^{2j+1} \| [ D_{n_F}^j v ] \|^2_{L^2(\mcF_h)}}
\\
&\qquad 
\lesssim 
 h ( \| v \|^2_{\Gamma_h}  +  h^{2(1-\gamma)} \| v \|^2_{s_h}) 
\lesssim
 h ( \| v \|^2_{a_h}  +  h^{2(1-\gamma)} \| v \|^2_{s_h}) 
\lesssim 
h \| v \|^2_{A_h},
\end{align}
where we used  the definition (\ref{eq:sh}) of $s_h$, the Poincar\'e inequality (\ref{eq:poincare-Kh}), 
and the fact that $\gamma \in [0,1]$.
 
\paragraph{Large Intersection Coverings of $\mcT_h$.} 
There is a covering 
$\{\mcT_{h,x} : x \in \mcX_h\}$ of $\mcT_h$, where $\mcX_h$ is an index set,  
such that: (1) Each set $\mcT_{h,x}$ contains a uniformly bounded number 
of elements. (2) Each element in $\mcT_{h,x}$ share at least one face with 
another element in $\mcT_{h,x}$. (3) In each set $\mcT_{h,x}$ there is one 
element $T_{x} \in \mcT_{h,x}$ which has a large intersection with 
$\Gamma_h$, 
\begin{equation}\label{eq:large-intersection}
h^{d-1} \lesssim |T_{x} \cap \Gamma_h |=|K_{x}|,
\end{equation}
where $d$ is the dimension. (4) The number of sets $\mcT_{h,x}$ to 
which an element $T$ belongs is uniformly bounded for all $T\in \mcTh$. 
See~\cite{BurHanLar15} for the construction of the covering.

We prove (\ref{eq:main-a}) by considering a set $\mcT_{h,x}$ in the 
covering and the following steps:

\paragraph{Step 1.} We shall show that 
\begin{equation}\label{eq:step1}
\| v \|^2_{L^2(\mcT_{h,x})} \lesssim \|v \|^2_{L^2(T_{x})}  
+ \sum_{j=1}^p h^{2j+1} ([D_{n_F}^j v], [D_{n_F}^j w])_{\mcF_{h,x}}, 
\end{equation} 
where $\mcF_{h,x}$ is the set of interior faces in $\mcT_{h,x}$. To prove (\ref{eq:step1}) 
we recall that for two elements $T_1$ and $T_2$ that share a face $F$ it holds 
\begin{equation}\label{eq:proof-b}
\| v \|^2_{L^2(T_{1})} \lesssim \| v \|^2_{L^2(T_{2})} +  \sum_{j=1}^p h^{2j+1} \|[D_{n_F}^j v]\|_{L^2(F)}^2,   
\end{equation} 
see \cite{HaLaLa17} for instance. Now let $\mcT_{h,x}^0 = \{T_{x}\}$ and for 
$j=1,2,\dots$ let  $\mcT_{h,x}^{j}$ be the set of all elements that share a face with an 
element in $\mcT_{h,x}^{j-1}$. Then there is a uniform constant $J$ such that  
$\mcT_{h,x}^{j} = \mcT_{h,x}^{j-1}$ for $j \geq J$ and we also have the estimate 
\begin{equation}\label{eq:proof-c}
\| v \|_{\mcT_{h,x}^{j}} 
\lesssim \| v \|_{\mcT_{h,x}^{j-1}}  
+   \sum_{F \subset {\mcF^{j}_{h,x}} \setminus \mcF^{j-1}_{h,x}} \sum_{j=1}^p h^{2j+1} \|[D_{n_F}^j v]\|_{L^2(F)}^2,
\end{equation}
where $\mcF^{j}_{h,x}$ is the set of interior faces in $\mcT_{h,x}^{j}$. Iterating 
(\ref{eq:proof-c}) we obtain (\ref{eq:step1}).

\paragraph{Step 2.} We shall show that there is a constant such that for all 
$T_{x}\in \mcTh$  which have the large intersection property (\ref{eq:large-intersection}),  
\begin{equation}\label{eq:step2}
\| v \|^2_{L^2(T_{x})} \lesssim \sum_{j=0}^{p} h^{2j+1} \| D_{n_h}^j  v \|_{L^2(K_{x})}^2 
\end{equation}
where $K_x = \Gamma_h \cap T_x$. To verify (\ref{eq:step2}) we define  the  cylinder
\begin{equation}\label{eq:cylKx}
\text{Cyl}_{\delta} ({K}_{x}) = \{ x \in \IR^d : x = y + t {n}_h, y \in {K}_{x}, |t|\leq \delta \}
\end{equation}
and using Taylor's formula for a polynomial $v \in P_p(\text{Cyl}_{\delta} (K_{x}) )$, 
we obtain the bound
\begin{equation}\label{eq:codim-mod}
\| v \|^2_{L^2(\text{Cyl}_{\delta} ({K}_{x}))} \lesssim  \sum_{j=0}^{p} \delta^{2j+1} \| D_{{n}_h}^j  v \|_{L^2(K_{x})}^2.
\end{equation}
Using the following estimate, which we verify below, there is a constant such that  
for all $v \in P_p(T_{x})$,
\begin{equation}\label{eq:inverse-step-2}
\| v \|_{L^2(T_{x})} \lesssim \| v \|_{L^2(\text{Cyl}_{\delta} ({K}_{x}))},
\end{equation}
the desired estimate (\ref{eq:step2}) follows for $\delta \sim h$. 

\paragraph{Verification of (\ref{eq:inverse-step-2}).} To employ a scaling argument 
we will construct two regular cylinders with circular cross section and the same center 
line that may be mapped to a reference configuration, one containing $T_x$ and one 
contained in $\text{Cyl}_{\delta} ({K}_{x})$. See Figure \ref{fig:proof-step2}.

Let $\overline{F}_x$ be a plane with unit normal $\overline{n}_x$, which is tangent to 
$K_x$ at an interior point of $K_x$. Then we have the bound 
\begin{equation}
\| \overline{n}_h - n_h \|_{L^\infty(K_x)} \lesssim h
\end{equation}
and with $\overline{K}_x$ the closest point projection of $K_x$ onto
$\overline{F}_x$ we conclude using shape regularity and a uniform
bound on the curvature $\kappa_h$ (Assumption A1) that there is a ball
$\overline{B}_{\overline{r}_1,x} \subset \overline{K}_x \subset
\overline{F}_x$ with radius $\overline{r}_1\sim h$ and a
$\overline{\delta}_1 \sim h$ such that
\begin{equation}\label{eq:inverse-step-2-a}
\text{Cyl}_{\overline{\delta}_1} (\overline{K}_{x}) = 
\{ x \in \IR^d : x = y + t \overline{n}_h, y \in \overline{K}_{x}, |t|\leq \overline{\delta}_1 \}
\subset \text{Cyl}_{\delta} ({K}_{x}).
\end{equation}
Next using shape regularity there is a larger ball $\overline{B}_{\overline{r}_2,x} \subset 
\overline{F}_x$ with the same center as  $\overline{B}_{\overline{r}_1,x}$ such that 
\begin{equation}
\overline{T}_x \subset \overline{B}_{\overline{r}_2,x}  \subset \overline{F}_x,
\end{equation}
where $\overline{T}_{x}$ is the closest point projection of $T_x$ onto $\overline{F}_x$, 
and a $\overline{\delta}_2\sim h$ such that 
\begin{equation}\label{eq:inverse-step-2-b}
T_x \subset \text{Cyl}_{\overline{\delta}_2}(\overline{B}_{\overline{r}_2,x}).
\end{equation}
Clearly, $\text{Cyl}_{\overline{\delta}_1}(\overline{B}_{\overline{r}_1,x}) 
\subset \text{Cyl}_{\overline{\delta}_2}(\overline{B}_{\overline{r}_2,x}) $ and 
using a mapping to a reference configuration  we conclude that there is a 
constant such that for all polynomials $v \in P_p(   \text{Cyl}_{\overline{\delta}_2} )(\overline{B}_{\overline{r}_2,x})$,
\begin{equation}
\| v \|_{ \text{Cyl}_{\overline{\delta}_2}(\overline{B}_{\overline{r}_2,x})} 
\lesssim 
 \| v \|_{ \text{Cyl}_{\overline{\delta}_1}(\overline{B}_{\overline{r}_1,x})} 
\end{equation}
which in view of the inclusions (\ref{eq:inverse-step-2-a}) and (\ref{eq:inverse-step-2-b}) 
concludes the proof of (\ref{eq:inverse-step-2}).
\paragraph{Step 3.}
Combining equation \eqref{eq:step1} and \eqref{eq:step2} and using that $\{\mcT_{h,x} : x \in \mcX_h\}$ is a cover of $\mcT_h$ completes the proof of \eqref{eq:main-a} and the lemma. 
\end{proof}
\begin{figure}\label{fig:cover-set}
\begin{center}
\includegraphics[scale=0.25]{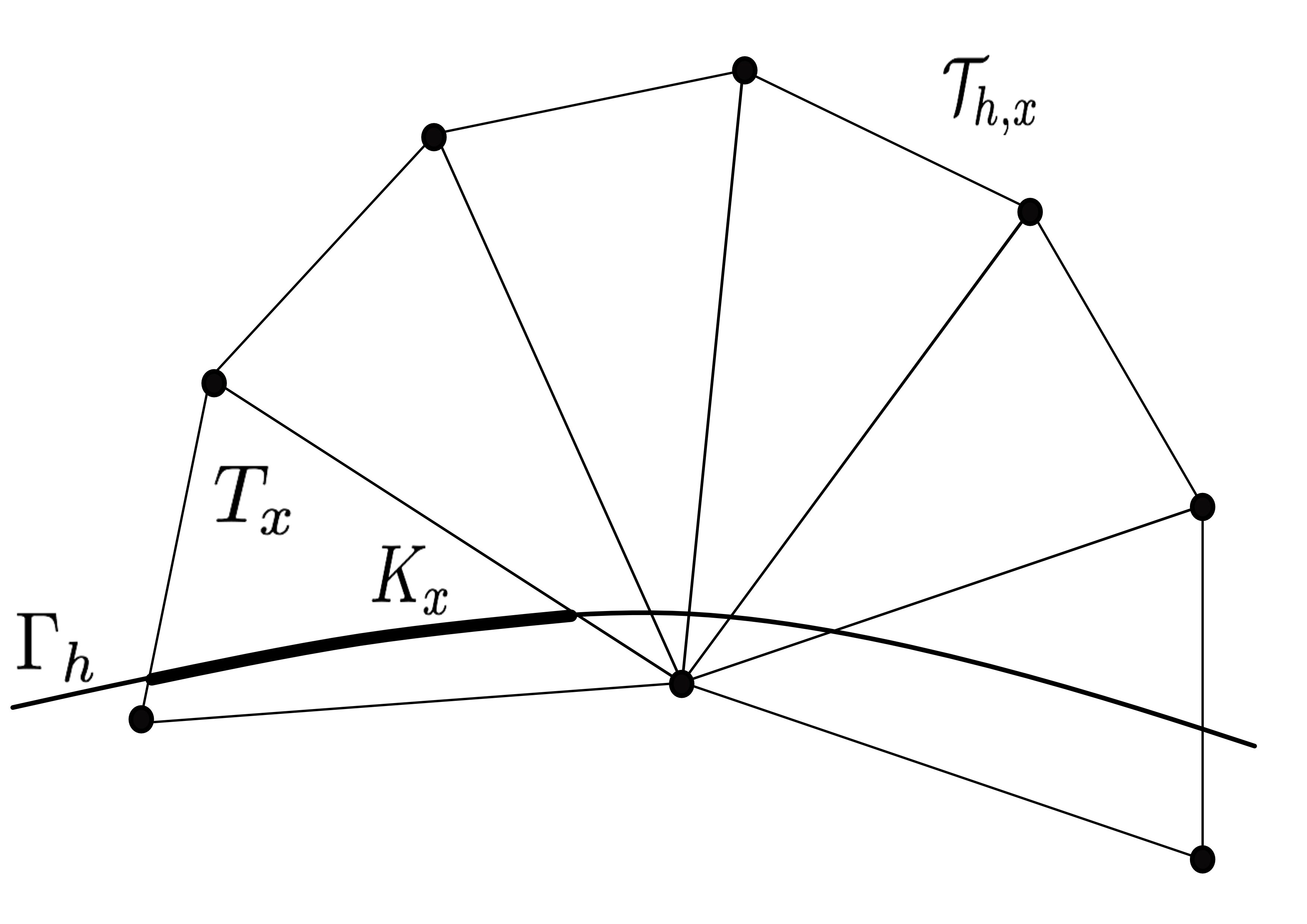}
\end{center}
\caption{A set of elements $\mcT_{h,x}$ in the cover of $\mcT_h$, an element $T_x$ with large intersection $K_x = T_x \cap \Gamma_h$.}
\end{figure}

\begin{figure}\label{fig:proof-step2}
\begin{center}
\includegraphics[scale=0.65]{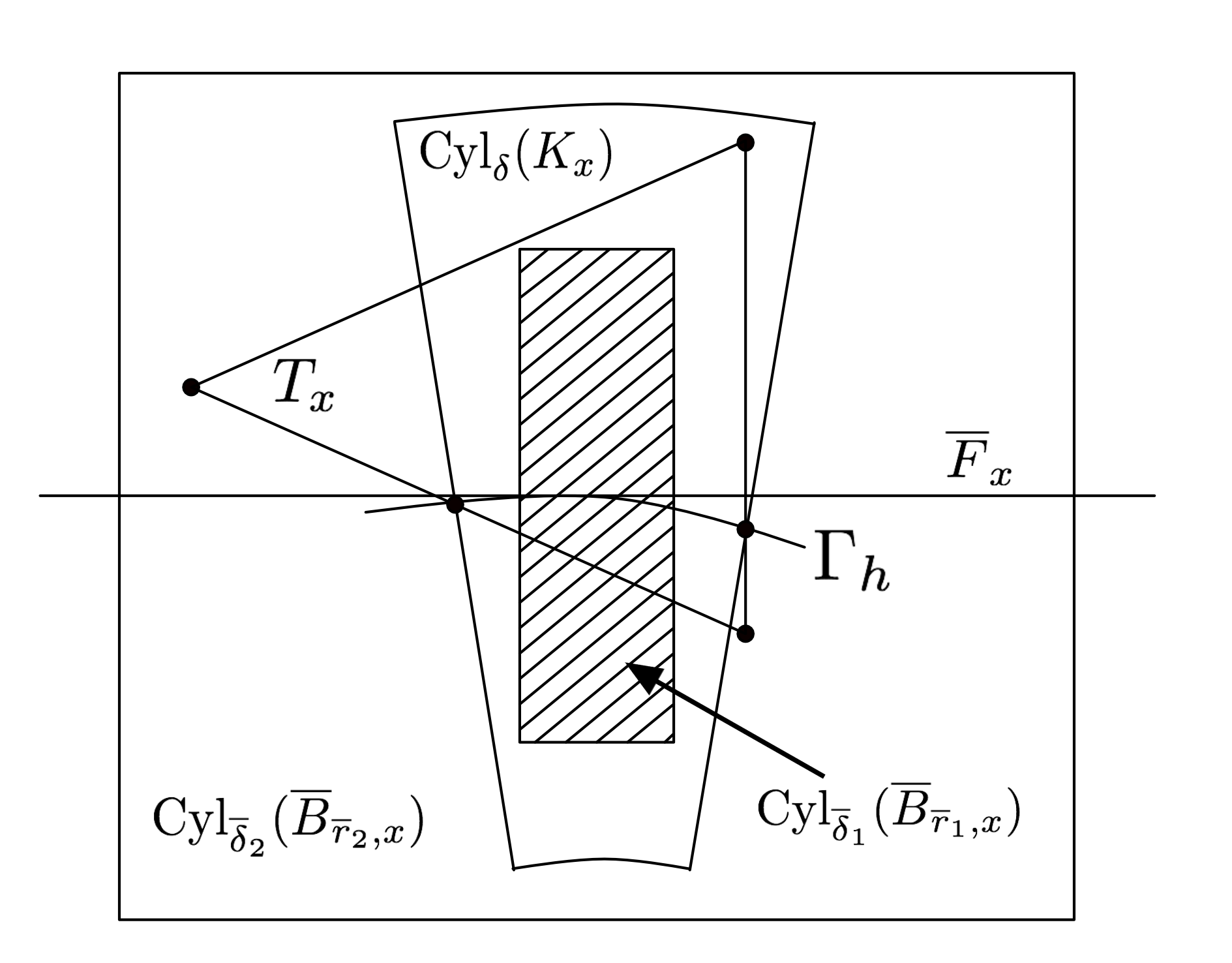}
\end{center}
\caption{Schematic figure illustrating a typical configuration of an element $T$, the curved intersection 
with $\Gamma_h$, the plane $\overline{F}_x$ which is tangent to $K_x$, the cylinders $\text{Cyl}_\delta(K_x)$, $\text{Cyl}_{\overline{\delta}_1}(\overline{B}_{\overline{r}_1})$, and $\text{Cyl}_{\overline{\delta}_2}(\overline{B}_{\overline{r}_2})$. Note that we have the 
inclusions $\overline{B}_{\overline{r}_1,x} \subset \overline{K}_x \subset \overline{T}_x \subset \overline{B}_{\overline{r}_2,x}$, 
$T_x \subset \text{Cyl}_{\overline{\delta}_2}(\overline{B}_{\overline{r}_2})$, and 
$\text{Cyl}_{\overline{\delta}_1}(\overline{B}_{\overline{r}_1}) \subset \text{Cyl}_\delta(K_x)$.}
\end{figure}

\begin{remark} Using the same technique as in the proof of Lemma \ref{lem:Poincare-Th}  
we may show the stronger estimate 
\begin{equation}\label{eq:poincare-stronger}
\| v \|^2_{\mcTh} +  h^{2\gamma} \| \nabla v \|^2_{\mcT_h} \lesssim h \| v \|^2_{A_h},
\end{equation}
where $\gamma \in [0,1]$ is the scaling parameter in $s_h$, see (\ref{eq:sh-face}) 
and (\ref{eq:sh-gamma}). The modifications are as follows. In Step 1 we show that 
\begin{equation}\label{eq:step1-grad}
\| \nabla v \|^2_{L^2(\mcT_{h,x})} \lesssim \| \nabla v \|^2_{L^2(T_{x})}  
+ \sum_{j=1}^p h^{2j-1} \|[D_{n_F}^j v]\|_{L^2(F)}^2
\end{equation} 
which after multiplication by $h^{2\gamma}$ corresponds to 
\begin{equation}\label{eq:step1-grad-b}
h^{2\gamma} \| \nabla v \|^2_{L^2(\mcT_{h,x})} \lesssim 
h^{2\gamma} \| \nabla v \|^2_{L^2(T_{x})}  
+ h \left( \sum_{j=1}^p h^{2(j-1+\gamma)} \|[D_{n_F}^j v]\|_{L^2(F)}^2 \right). 
\end{equation} 
To show (\ref{eq:step1-grad}) we use the estimate 
\begin{equation}\label{eq:step1-twoelem-grad}
\| \nabla v \|^2_{L^2(T_{1})} 
\lesssim 
\| \nabla v \|^2_{L^2(T_{2})}  
+  \sum_{j=1}^p h^{2j-1} \|[D_{n_F}^j v]\|_{L^2(F)}^2.   
\end{equation} 
In Step 2 we show that
\begin{equation}\label{eq:step2-grad}
\| \nabla v \|^2_{L^2(T_{x})} 
\lesssim  \| \nablash v \|^2_{K_x} 
+ \sum_{j=1}^{p} h^{2j - 1} \| D_{n_h}^j  v \|_{L^2(K_{x})}^2 
\end{equation}
which after multiplication by $h^{2\gamma}$ corresponds to
\begin{equation}\label{eq:step2-grad-b}
h^{2\gamma} \| \nabla v \|^2_{L^2(T_{x})} 
\lesssim \underbrace{h^{2\gamma} \| \nablash v \|^2_{K_x}}_{\lesssim \| \nablash v \|^2_{K_x} }
+ h \left( \sum_{j=1}^{p} h^{2(j - 1 + \gamma)} \| D_{n_h}^j  v \|_{L^2(K_{x})}^2 \right).
\end{equation}
Combining (\ref{eq:step1-grad-b}) and (\ref{eq:step2-grad-b}), summing over 
the cover, and using Property (4) of the cover we obtain (\ref{eq:poincare-stronger}).
\end{remark}

\begin{rem}
Note the following:
\begin{itemize}
\item From~\eqref{eq:inversestab} we see that the scaling of the stabilization term corresponds to the mass matrix.  
This scaling is the weakest which guarantees the Poincar\'e inequality (\ref{eq:poincare-Th}) (Property P4 of the stabilization term).
\item For the operator 
\begin{equation}
L = \alpha \Delta_\Gamma + \beta
\end{equation}
where $\alpha$ and $\beta$ are constants, we would have the inverse inequality  
\begin{equation}\label{eq:inverse-L}
\| v \|_{A_h} \lesssim( \alpha h^{-3/2} + \beta h^{-1/2}  )\| v \|_{\mcT_h} \qquad v \in V_h^p
\end{equation} 
instead of (\ref{eq:inverseA}), and the resulting condition number estimate is then 
\begin{equation}
\kappa(\mcA_h) \lesssim \alpha h^{-2} + \beta. 
\end{equation}
\item In the case when
\begin{equation}
\| v \|_{a_h} \lesssim (\alpha h^{-3/2} + \beta h^{-1/2} ) \| v \|_{\mcT_h}
\end{equation}
we note that the desired inverse inequality (\ref{eq:inverse-L}) holds 
for 
\begin{equation}
\widetilde{s}_h = (\alpha h^{-\tau} + \beta ) s_h,  
\end{equation}
with $\tau$ in the interval $0\leq \tau \leq 2$, since 
\begin{equation}
\| v \|^2_{\widetilde{s}_h} 
=
(\alpha h^{-\tau} + \beta) \| v \|^2_{s_h} 
\lesssim 
(\alpha h^{-\tau} + \beta) h^{-1} \| v \|^2_{\mcT_h}
\lesssim 
(\alpha h^{-(1+\tau)} + \beta h^{-1} ) \| v \|^2_{\mcT_h}.
\end{equation}
\end{itemize}
\end{rem}

\section{Extension to Problems on Manifolds with General Codimension Embeddings} \label{sec:codim}

The stabilization term (\ref{eq:sh}) may be extended to the more general case of an $n$-dimensional smooth manifold embedded in $\IR^d$, 
with codimension $cd = d - n > 1$, by scaling the face penalty term (\ref{eq:sh-face}) in 
such a way that the stabilization terms associated with the faces and surface scale in the same way,
\begin{align} \label{eq:sh-face-cd}
s_{h,F}(w,v)&= \sum_{j=1}^p c_{F,j} h^{2(j-1+\gamma) } h^{1-{cd}}([D_{n_F}^j w], [D_{n_F}^j v])_{\mcF_h}, 
\\ \label{eq:sh-gamma-cd}
s_{h,\Gamma}(w,v)&=\sum_{j=1}^p c_{\Gamma,j} h^{2(j-1+ \gamma)}
(D_{n_h}^j w, D_{n_h}^j v)_{\mcK_h}.
\end{align}
This is the same scaling as is used for the face stabilization term in the case of piecewise 
linear elements in general codimension, see Table 1 in \cite{BurHanLarMas16}. With this 
definition P1-P3 remain the same and may be verified using the results in \cite{BurHanLarMas16}. P4-P5 take the more general form
\begin{enumerate}
\item[{\bf P4.}] For $ v \in V_h^p$, 
\begin{equation}
\| v \|^2_{s_h} \lesssim h^{-(cd+2)}\| v \|^2_{L^2(\mcT_h)}.
\end{equation}
\item[{\bf P5.}]  For $v \in V_h^p$, 
\begin{equation}\label{eq:property5-cd}
\| v \|_{L^2(\mcT_h)}^2 \lesssim h^{cd} (\| v \|_{a_h}^2+ \| v \|_{s_h}^2 ).
\end{equation}
\end{enumerate}
P4 is verified as in equation \eqref{eq:inversestab} and below we comment on the minor modifications in 
the proof of Lemma \ref{lem:Poincare-Th} necessary to verify P5.

We next extend the proof of Lemma \ref{lem:Poincare-Th}, to the case 
of a general embedding of an $n$-dimensional surface in $\IR^d$. To that end we first note that 
Step 1 can be directly carried out for $d$-dimensional simplices and we obtain 
\begin{equation}\label{eq:step1-codim}
\| v \|^2_{L^2(\mcT_{h,x})} \lesssim \|v \|^2_{L^2(T_{x})}  
+ h^{cd} \left(\sum_{j=1}^p h^{2j} h^{1-cd} ([D_{n_F}^j v], [D_{n_F}^j w])_{\mcF_{h,x}} \right).
\end{equation} 
For Step 2, the only 
difference is that we will have $cd = d-n$ orthonormal normal directions $\{n_{h,i}\}_{i=1}^{cd}$, which we may choose to vary smoothly over $K_x$, see \cite{BurHanLarMas16} 
for details, and therefore the definition of the cylinder over $K_x$ takes the form
\begin{equation}\label{eq:cylKx-codim}
\text{Cyl}_{\delta} ({K}_{x}) = \{ x \in \IR^d : x = y + \sum_{i=1}^{cd} t_i {n}_{h,i}, y  \in {K}_{x}, |t_i|\leq \delta, i=1,\dots,cd \}.
\end{equation}
Using Taylor's formula in several dimensions and integrating over the cylinder 
(\ref{eq:cylKx-codim}), we obtain the following generalization of \eqref{eq:codim-mod}, 
\begin{equation}
\| v \|^2_{L^2(\text{Cyl}_{\delta} ({K}_{x}))} \lesssim  \sum_{j=0}^{p} \delta^{2j+cd} \| D_{{n}_h}^j  v \|_{L^2(K_{x})}^2
\end{equation}
where we used the fact that, for a monomial  $\Pi_{i=1}^{cd} x_i^{2l_i}$, with 
$\sum_{i=1}^{cd} l_i = j$, it holds
\begin{equation}
\int_{[-\delta,\delta]^{cd}} \Pi_{i=1}^{cd} x_i^{2l_i}   
=  \Pi_{i=1}^{cd} \int_{-\delta}^\delta x_i^{2l_i} dx_i
\sim \Pi_{i=1}^{cd} \delta^{2l_i+1} 
= \delta^{2j+cd}.
\end{equation}
We finally note that (\ref{eq:inverse-step-2}) holds in any dimension $d$ and the desired estimate in Step 2 follows for $\delta \sim h$, 
and Step 3 is just a combination of Step 1 and 2. Thus the proof of 
Lemma \ref{lem:Poincare-Th} easily extends to the case of general 
codimension embeddings and we conclude that (\ref{eq:property5-cd}) holds.

\section{A Priori Error Estimates}
In this section we prove optimal error estimates in the energy norm and in the $L^2$ norm. 
 
\subsection{Strang Lemma}
Using that $A_h$ is continuous and satisfies the inf-sup condition we first show a Strang Lemma which connects the error in the energy norm to the interpolation error and the consistency error.

\begin{lem}
Let $u \in H^1_0(\Gamma)$ be the unique solution of \eqref{eq:LB} and $u_h\in V_{h}^p$ the finite element approximation defined by \eqref{eq:Ah}. Then the following discretization error bound holds
\begin{equation}
\|u^e-u_h\|_{A_h} \lesssim \|u^e-\pi_h^p u^e\|_{A_h}+\sup_{v \in V_h^p \setminus  \{0\}} \frac{|A_h (u^e,v)  - l_h(v)|}{\| v \|_{A_h}}.
\end{equation}
\end{lem}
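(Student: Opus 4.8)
The plan is to prove this standard Strang-type estimate, which bounds the energy-norm discretization error by an interpolation term plus a consistency term, using the inf-sup condition (equivalently, coercivity) of $A_h$ together with its continuity. Since $u_h - \pi_h^p u^e \in V_h^p$, I would apply the inf-sup condition to this finite element function and then manipulate the supremum so that the Galerkin-type relation and the continuity of $A_h$ can be invoked.

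First I would split the error via the triangle inequality,
\begin{equation}
\|u^e - u_h\|_{A_h} \leq \|u^e - \pi_h^p u^e\|_{A_h} + \|\pi_h^p u^e - u_h\|_{A_h},
\end{equation}
so that it remains to estimate the second term, in which both arguments lie in $V_h^p$. To this end I would apply the inf-sup condition \eqref{eq:inf-sup} to $w = \pi_h^p u^e - u_h \in V_h^p$, giving
\begin{equation}
\|\pi_h^p u^e - u_h\|_{A_h} \lesssim \sup_{v \in V_h^p \setminus \{0\}} \frac{A_h(\pi_h^p u^e - u_h, v)}{\|v\|_{A_h}}.
\end{equation}

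Next I would rewrite the numerator using the discrete equation \eqref{eq:Ah}, namely $A_h(u_h, v) = l_h(v)$ for all $v \in V_h^p$, to obtain
\begin{equation}
A_h(\pi_h^p u^e - u_h, v) = A_h(\pi_h^p u^e, v) - l_h(v) = A_h(\pi_h^p u^e - u^e, v) + \big(A_h(u^e, v) - l_h(v)\big).
\end{equation}
The first piece is controlled by continuity \eqref{eq:Ah-cont}, which is valid on $V_h^p + H^{p+1}(\mcT_h)$ and hence applies to the mixed argument $\pi_h^p u^e - u^e$, yielding a factor $\|u^e - \pi_h^p u^e\|_{A_h}\|v\|_{A_h}$ that cancels the $\|v\|_{A_h}$ in the denominator. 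The second piece is exactly the consistency (Galerkin orthogonality defect) term $A_h(u^e, v) - l_h(v)$, whose supremum over normalized $v$ is what appears in the statement.

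Collecting these bounds and absorbing the interpolation contribution into the first term of the triangle inequality gives the claim. I do not expect a genuine obstacle here; the one point requiring care is that continuity must be invoked in the form stated on $V_h^p + H^{p+1}(\mcT_h)$, since $u^e$ is not a finite element function — but this is precisely why \eqref{eq:Ah-cont} was proved on that larger space, so the argument goes through cleanly.
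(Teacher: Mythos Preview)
Your proof is correct and follows essentially the same approach as the paper's own proof: triangle inequality, inf-sup condition applied to $\pi_h^p u^e - u_h \in V_h^p$, rewriting via the discrete equation $A_h(u_h,v)=l_h(v)$ and adding/subtracting $A_h(u^e,v)$, then continuity of $A_h$ on $V_h^p + H^{p+1}(\mcT_h)$ for the interpolation piece. Your remark about needing continuity on the larger space is exactly the point the paper addresses by stating \eqref{eq:Ah-cont} in that form.
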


\begin{proof}
Adding and subtracting an interpolant and using the triangle inequality we get
\begin{equation}
\|u^e-u_h\|_{A_h} \leq \|u^e-\pi_h^p u^e\|_{A_h}+\|\pi_h^p u^e-u_h\|_{A_h}.
\end{equation}
Using the inf-sup condition (\ref{eq:inf-sup}) for $A_h$  we have
\begin{equation}
\|\pi_h^p u^e-u_h\|_{A_h} \lesssim \sup_{v \in V_h^p \setminus  \{0\}} \frac{A_h(\pi_h^p u^e-u_h,v)}{\| v \|_{A_h}}.
\end{equation}
Adding and subtracting $A_h (u^e,v)$ and using the weak formulation \eqref{eq:Ah} yields
\begin{align}
A_h(\pi_h^p u^e - u_h,v) 
&=
A_h (\pi_h^p u^e - u^e,v) + A_h (u^e - u_h,v)
\\
&=
A_h (\pi_h^p u^e - u^e,v) + A_h (u^e,v)  - l_h(v).
\end{align}
Finally, using the continuity of $A_h$ we obtain 
\begin{equation}
A_h(\pi_h^p u^e - u^e,v) \leq \|  \pi_h^p u^e - u^e \|_{A_h} \| v \|_{A_h}.
\end{equation}
Collecting the estimates we end up with the desired bound 
\begin{equation}
\|u^e-u_h\|_{A_h} \lesssim \|u^e-\pi_h^p u^e\|_{A_h}+\sup_{v \in V_h^p \setminus  \{0\}} \frac{|A_h (u^e,v)  - l_h(v)|}{\| v \|_{A_h}}.
\end{equation}
\end{proof}

\subsection{The Interpolation Error} \label{sec:interp}
Next we prove optimal interpolation error estimates using Property P1 of the 
stabilization term. 

\begin{lem} For all $u\in H^{p+1}(\Gamma)$ we have the following estimate \label{lem:interp}
\begin{equation}\label{eq:interpolenergysurf}
\| u^e - \pi_h^p u^e \|_{A_h} \lesssim h^{p}\|u\|_{H^{p+1}(\Gamma)}.
\end{equation}
\end{lem}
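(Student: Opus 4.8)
The plan is to split the $A_h$-norm into its two defining pieces, $\| \cdot \|_{A_h}^2 = \| \cdot \|_{a_h}^2 + \| \cdot \|_{s_h}^2$ as in (\ref{eq:energinorm}), and bound the interpolation error in each separately. The stabilization part is immediate: Property P1, equation (\ref{eq:property1}), gives at once
\begin{equation}
\| u^e - \pi_h^p u^e \|_{s_h} \lesssim h^p \| u \|_{H^{p+1}(\Gamma)},
\end{equation}
so only the $a_h$-part requires real work, and the whole task reduces to controlling the gradient of the interpolation error measured on $\Gammah$.

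For the $a_h$-part, I would write $\| u^e - \pi_h^p u^e \|_{a_h}^2 = \| \nablash (u^e - \pi_h^p u^e) \|_{L^2(\Gammah)}^2$. Since $\nablash v = \Psh \nabla v$ with $\Psh$ an orthogonal projection, one has $|\nablash v| \leq |\nabla v|$ pointwise, so it suffices to bound $\| \nabla (u^e - \pi_h^p u^e) \|_{L^2(\Gammah)}^2$. The cut surface $\Gammah$ sits inside the active mesh, so I would apply the trace inequality (\ref{eq:trace}) on each $T \in \mcTh$ (component-wise to $\nabla w$, with $w = u^e - \pi_h^p u^e$) and sum, obtaining control by $h^{-1} \| \nabla w \|_{L^2(\mcTh)}^2 + h \| D^2 w \|_{L^2(\mcTh)}^2$.

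Next I would feed in the Cl\'ement interpolation estimate (\ref{eq:interpoltets}) twice, with $(m,s) = (1,p+1)$ and $(m,s) = (2,p+1)$, supplying factors $h^{2p}$ and $h^{2(p-1)}$ respectively against $\| u^e \|_{H^{p+1}(\mcTh)}^2$; both terms then collapse to the common scaling $h^{2p-1} \| u^e \|_{H^{p+1}(\mcTh)}^2$. Finally, invoking Assumption A2 (so that $\mcTh \subset U_\delta(\Gamma)$ with $\delta \sim h$) together with the extension stability (\ref{eq:extstab}) converts the bulk norm to the surface norm at the cost of one factor $\delta \sim h$, yielding $\| u^e \|_{H^{p+1}(\mcTh)}^2 \lesssim h \| u \|_{H^{p+1}(\Gamma)}^2$. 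Multiplying gives $h^{2p-1} \cdot h = h^{2p}$, which closes the $a_h$-estimate; adding the P1 bound then produces the claimed $h^p$ rate in the full $A_h$-norm.

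The one point to watch is the power bookkeeping. The trace inequality (\ref{eq:trace}) loses a factor $h^{-1}$, and the only thing restoring it is the $\delta^{1/2} \sim h^{1/2}$ gain in the extension stability (\ref{eq:extstab}); these two half-powers must be tracked carefully so that the trace loss is exactly compensated. It is also worth noting the small coincidence that the $m=1$ and $m=2$ contributions from the trace inequality both land on the same power $h^{2p-1}$, so neither dominates. Beyond this bookkeeping the argument is a routine chain of the standard estimates already collected in Section~\ref{sec:prel}.
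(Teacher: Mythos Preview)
Your proposal is correct and follows essentially the same route as the paper's own proof: split $\|\cdot\|_{A_h}^2$ into the $a_h$- and $s_h$-parts, invoke Property~P1 for the latter, and for the former apply the trace inequality (\ref{eq:trace}), the Cl\'ement estimate (\ref{eq:interpoltets}) with $m=1,2$ and $s=p+1$, and the extension stability (\ref{eq:extstab}) with $\delta\sim h$. Your power bookkeeping is accurate and matches the paper's computation.
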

\begin{proof} 
From the definition of the energy norm $\| \cdot \|_{A_h}$, equation (\ref{eq:energinorm}), we have
\begin{equation}
\| u^e - \pi_h^p u^e \|_{A_h}^2 =\underbrace{\| u^e- \pi_h^p u^e \|^2_{a_h}}_{I}+\underbrace{\| u^e- \pi_h^p u^e\|^2_{s_h}}_{II}.
\end{equation}

\paragraph{Term $\bfI$.} Using the element wise trace inequality~\eqref{eq:trace}, standard 
interpolation estimates \eqref{eq:interpoltets} on elements $T\in \mcT_h$, and the stability estimate \eqref{eq:extstab} for the extension operator with $\delta \sim h$, we obtain 
\begin{align}\label{eq:interpolsurf}
I&=\| u^e - \pi_h^p u^e \|^2_{a_h} 
\\
&\lesssim 
\sum_{T\in \mcT_h} \left(h^{-1}\| u^e - \pi_h^p u^e \|^2_{H^1(T)}+h\|u^e - \pi_h^p u^e \|^2_{H^2(T)} \right) 
\\
&\lesssim
h^{2p} \| u\|^2_{H^{p+1}(\Gamma)}.
\end{align}
\paragraph{Term $\bfI\bfI$.}  From Property P1 of the stabilization term we have  that
\begin{align}\label{eq:interpolstab}
II &\lesssim  h^{2p} \| u\|^2_{H^{p+1}(\Gamma)}. 
 \end{align}

Combining the two estimates \eqref{eq:interpolsurf} and \eqref{eq:interpolstab} the result follows.
\end{proof}

\subsection{The Consistency Error} 
The approximation of the geometry $\Gamma$ by $\Gamma_h$ and the approximation 
of the data $f$ by $f_h$ leads to a consistency error that we estimate in the next lemma.

\begin{lem}  \label{lem:consistency} Let $u\in H^{p+1}(\Gamma)\cap H^1_0(\Gamma)$ 
be the solution of \eqref{eq:LB} then, the following bound holds
\begin{equation}\label{eq:consistency}
\sup_{v \in V_h^p \setminus  \{0\}} \frac{|A_h (u^e,v)  - l_h(v)|}{\| v \|_{A_h}} \lesssim h^{p}  \| u  \|_{H^{p+1}(\Gamma)} +h^{p+1} \|f\|_{L^2(\Gamma)}. 
\end{equation}
\end{lem}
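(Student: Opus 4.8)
I want to bound the consistency error $A_h(u^e,v) - l_h(v)$ for $v \in V_h^p$. The key point is that the continuous solution $u$ satisfies $a(u,v^l) = l(v^l)$ (weak formulation on $\Gamma$), while the discrete forms live on $\Gamma_h$. So the strategy is to split the consistency error into three conceptually distinct pieces: (i) the stabilization contribution $s_h(u^e, v)$, which is a pure perturbation since $s_h$ is absent from the continuous problem; (ii) the geometric error from comparing $a_h(u^e, v)$ against the exact form $a(u, v^l)$ transported via the lifting; and (iii) the data approximation error from comparing $l_h(v)$ against $l(v^l)$.

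**Plan.** First I would write
\begin{equation}
A_h(u^e,v) - l_h(v) = \underbrace{a_h(u^e,v) - a(u,v^l)}_{\text{geometry}} + \underbrace{s_h(u^e,v)}_{\text{stabilization}} + \underbrace{l(v^l) - l_h(v)}_{\text{data}},
\end{equation}
using $a(u,v^l) = l(v^l)$ from \eqref{eq:LB}. The stabilization term is handled immediately by Property P2: $|s_h(u^e,v)| \le \|u^e\|_{s_h}\|v\|_{s_h} \lesssim h^p \|u\|_{H^{p+1}(\Gamma)}\|v\|_{A_h}$, since $\|v\|_{s_h} \le \|v\|_{A_h}$. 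For the geometric term I would pull back $a_h$ to $\Gamma$ using the change-of-variables identities: from \eqref{eq:tanderext}, \eqref{eq:tanderlift} and the measure identity \eqref{eq:measure}, $a_h(u^e,v) = \int_{\Gamma_h} \nabla_{\Gamma_h} u^e \cdot \nabla_{\Gamma_h} v$ becomes an integral over $\Gamma$ whose integrand differs from $\nabla_\Gamma u \cdot \nabla_\Gamma v^l$ by a factor involving $B^T B |B|^{-1} - I$ (or the analogous combination). The estimate $\|BB^T - I\|_{L^\infty(\Gamma_h)} \lesssim h^{p+1}$ from \eqref{B-uniform-bounds} together with $\|1-|B|\|_{L^\infty(\Gamma_h)} \lesssim h^{p+1}$ from \eqref{eq:B-detbound} then gives a factor $h^{p+1}$, and Cauchy--Schwarz with the gradient norm equivalences \eqref{eq:normequgrad} closes this at $h^{p+1}\|\nabla_\Gamma u\|_{L^2(\Gamma)}\|\nabla_\Gamma v^l\|_{L^2(\Gamma)} \lesssim h^{p+1}\|u\|_{H^1(\Gamma)}\|v\|_{a_h}$.

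**Data term.** For $l(v^l) - l_h(v) = \int_\Gamma f v^l \,ds - \int_{\Gamma_h} f_h v \,ds_h$, I would again change variables on the first integral using $ds = |B|\,ds_h$ and $v^l \circ p = v$, turning it into $\int_{\Gamma_h}(|B| f^e - f_h) v \,ds_h$. Assumption A3 gives $\||B|f^e - f_h\|_{L^2(\Gamma_h)} \lesssim h^{p+1}\|f\|_{L^2(\Gamma)}$, so Cauchy--Schwarz plus the Poincaré inequality \eqref{eq:poincare-Kh} (to bound $\|v\|_{L^2(\Gamma_h)} \lesssim \|v\|_{a_h} \le \|v\|_{A_h}$) yields $h^{p+1}\|f\|_{L^2(\Gamma)}\|v\|_{A_h}$. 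Dividing through by $\|v\|_{A_h}$ and taking the supremum assembles the three bounds into \eqref{eq:consistency}, noting $h^{p+1}\|u\|_{H^1} \lesssim h^p\|u\|_{H^{p+1}}$.

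**Main obstacle.** The delicate step is the geometric term: one must carefully track which combination of $B$, $B^T$, and $|B|$ appears after transporting both gradients and the measure, and confirm that the resulting discrepancy tensor has $L^\infty$ norm $O(h^{p+1})$ rather than a weaker rate. The orthogonal projection structure ($P_\Gamma$, $P_{\Gamma_h}$) inside $B$ means $B$ is not invertible as a full matrix, so the manipulation must stay within the tangent spaces and use $BB^{-1}=P_\Gamma$, $B^{-1}B = P_{\Gamma_h}$ correctly; getting the exact perturbation identity right is where care is needed, though the final estimate is routine once the $h^{p+1}$ factor is exposed.
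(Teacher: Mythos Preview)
Your proposal is correct and follows essentially the same approach as the paper: the same three-term split into geometric error $a_h(u^e,v)-a(u,v^l)$, data error $l(v^l)-l_h(v)$, and stabilization $s_h(u^e,v)$, each handled by the same tools (the $B$-estimates \eqref{B-uniform-bounds}--\eqref{eq:B-detbound}, Assumption A3 with Poincar\'e, and Property P2, respectively). The only cosmetic difference is that the paper absorbs the geometric contribution $h^{p+1}\|\nabla_\Gamma u\|_{L^2(\Gamma)}$ into the $h^{p+1}\|f\|_{L^2(\Gamma)}$ term via the stability estimate for $u$, whereas you absorb it into $h^p\|u\|_{H^{p+1}(\Gamma)}$; both are fine.
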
 
\begin{proof}
We have the identity
\begin{align}
|A_h ( u^e,v)-l_h(v)|&
 \leq \underbrace{|a_h (u^e,v)  - l_h(v)|}_{I} +
\underbrace{|s_{h}(u^e,v)|}_{II}. 
 \end{align}
\paragraph{Term $\bfI$.} Adding $-a(u,v^l)+l(v^l)=0$, and using the triangle inequality
\begin{equation}
I \leq \underbrace{|a_h(u^e,v) - a(u,v^l)|}_{I_I} + \underbrace{|l(v^l) - l_h(v)|}_{I_{II}}.
\end{equation}
\paragraph{Term $\bfI_{\bfI}$.} Adding and subtracting  $\int_{\Gamma_h} \nabla_{\Gamma_h} u^e \cdot \nabla_{\Gamma_h} v |B|\,ds_h$, using that $B^TB^{-T}=P_{\Gamma_h}$, changing the domain of integration from $\Gamma$ to $\Gamma_h$, using the triangle inequality, norm equivalences in \eqref{eq:normequgrad}, estimates \eqref{B-uniform-bounds}, and \eqref{eq:B-detbound} we obtain
\begin{align} 
|a_h(u^e,v) - a(u,v^l)| 
&=\Big|\int_{\Gamma_h} (1-|B|) \nabla_{\Gamma_h} u^e \cdot \nabla_{\Gamma_h} v \,ds_h\nonumber 
\\
&\qquad + \int_{\Gamma_h}  B^T(B^{-T}\nabla_{\Gamma_h} u^e) \cdot B^T(B^{-T}\nabla_{\Gamma_h} v) |B|\,ds_h 
\nonumber 
\\
&\qquad - \int_{\Gamma_h} (B^{-T}\nabla_{\Gamma_h} u^e) \cdot (B^{-T}\nabla_{\Gamma_h} v) |B| \,ds_h \Big|
\nonumber 
\\
&\lesssim \left( \left\| 1 - |B| \right\|_{L^\infty(\Gamma_h)}+\| BB^T-I\|_{L^\infty(\Gamma_h)}  \right) \| \nabla_{\Gamma} u\|_{L^2(\Gamma)} \| \nabla_{\Gamma_h} v \|_{L^2(\Gammah)} \nonumber \\
&\lesssim h^{p+1}\| \nabla_{\Gamma} u\|_{L^2(\Gamma)}   \| v \|_{a_h}.
\end{align}
\paragraph{Term $\bfI_{\bfI\bfI}$.} Changing the domain of integration we get
\begin{align} 
|l(v^l) - l_h(v)|
&=\Big|\int_{\Gamma} fv^l \,ds-\int_{\Gamma_h} f_hv \,ds_h \Big|
\\  
&=
\Big|\int_{\Gamma_h} f^e v|B|\,ds_h-\int_{\Gamma_h} f_hv \,ds_h\Big| 
\\ 
&\lesssim  \||B|f^e-f_h \|_{L^2(\Gammah)}\| v \|_{L^2(\Gammah)}
\\
&\lesssim h^{p+1} \| f \|_{L^2(\Gamma)} \| v \|_{L^2(\Gamma)},
\end{align}
where we at last used Assumption A3 on the data approximation $f_h$. 
Together, the bounds of $I_I$ and $I_{II}$, and the Poincar\'e inequality 
\eqref{eq:poincare-Kh}, imply
\begin{align}
I &\lesssim h^{p+1} \| \nabla_{\Gamma} u\|_{L^2(\Gamma)} \| v \|_{a_h} 
+h^{p+1} \|f\|_{L^2(\Gamma)} \| v \|_{L^2(\Gamma_h)}
\\ \label{eq:estI}
&\lesssim 
( \| \nabla_{\Gamma} u\|_{L^2(\Gamma)} + \|f\|_{L^2(\Gamma)} )h^{p+1} \| v \|_{a_h}. 
\end{align}
Finally, using that $u$ is the solution of \eqref{eq:LB} we have the stability estimate 
$\| \nabla_{\Gamma} u\|_{L^2(\Gamma)}\lesssim \|f\|_{L^2(\Gamma)}$, and we obtain
\begin{equation}
I \lesssim h^{p+1} \|f\|_{L^2(\Gamma)} \| v \|_{a_h}. 
\end{equation}
\paragraph{Term $\bfI\bfI$.} 
Using the Cauchy-Schwarz inequality and Property P2 of the stabilization term, see Section \ref{sec:condonstab},  we obtain
\begin{align}\label{eq:estII}
|s_{h}(u^e,v)|
&\lesssim \|u^e\|_{s_h}\| v\|_{s_{h}} \lesssim
h^{p}\|u\|_{H^{p+1}(\Gamma)} \| v\|_{s_{h}}.
\end{align}

Combining the estimates of $I$ and $II$ we obtain the desired estimate
\begin{equation}
| A_h ( u^e,v)-l_h(v)| \lesssim h^{p}  \| u  \|_{H^{p+1}(\Gamma)} \| v \|_{A_h} 
+h^{p+1} \|f\|_{L^2(\Gamma)} \| v \|_{a_h}.
\end{equation}
\end{proof}

\subsection{Error Estimates}
We first prove optimal error estimates in the energy norm and then apply a duality argument to obtain $L^2$-error estimates

\begin{thm}\label{thm:apriori-energy} Let $u \in H^{p+1}(\Gamma)\cap H^1_0(\Gamma)$ be 
the solution of \eqref{eq:LB} and $u_h \in V_{h}^p$ the finite element approximation defined by \eqref{eq:Ah}. If assumptions A1-A3 hold and the stabilization term has properties P1-P5 then, 
there is a constant independent of the mesh size $h$ such that the following error bound holds
\begin{equation}
\|u^e-u_h\|_{A_h}  \lesssim h^{p}  \| u  \|_{H^{p+1}(\Gamma)} +h^{p+1} \|f\|_{L^2(\Gamma)}. 
\end{equation}
\end{thm}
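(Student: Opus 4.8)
The plan is to assemble the theorem from the three results just established in this section: the Strang Lemma, the interpolation error estimate (Lemma~\ref{lem:interp}), and the consistency error estimate (Lemma~\ref{lem:consistency}). Since the Strang Lemma already reduces the energy-norm error to a sum of the interpolation error and the consistency supremum, the proof is an assembly of prior estimates rather than a fresh argument.

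First I would apply the Strang Lemma to bound $\|u^e - u_h\|_{A_h}$ by $\|u^e - \pi_h^p u^e\|_{A_h}$ plus the consistency supremum $\sup_{v} |A_h(u^e,v) - l_h(v)|/\|v\|_{A_h}$. The two ingredients the Strang Lemma requires, continuity of $A_h$ and the inf-sup condition, are already in hand from the earlier continuity and inf-sup lemma, since continuity follows from Cauchy--Schwarz and the inf-sup condition follows from the coercivity identity $A_h(v,v) = \|v\|_{A_h}^2$.

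Next I would estimate the two terms separately. For the interpolation term, Lemma~\ref{lem:interp} gives $\|u^e - \pi_h^p u^e\|_{A_h} \lesssim h^{p}\|u\|_{H^{p+1}(\Gamma)}$, where Property P1 controls the stabilization part and the trace--interpolation--extension chain controls the $a_h$ part. For the consistency supremum, Lemma~\ref{lem:consistency} gives the bound $h^{p}\|u\|_{H^{p+1}(\Gamma)} + h^{p+1}\|f\|_{L^2(\Gamma)}$, relying on Property P2 to control the stabilization consistency contribution $s_h(u^e,v)$, on the geometric bounds for $B$ and $|B|$ in \eqref{B-uniform-bounds} and \eqref{eq:B-detbound} to control the mismatch between $a_h(u^e,v)$ and $a(u,v^l)$, and on Assumption A3 for the data approximation $f_h$.

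Summing the two bounds immediately yields the claimed estimate $\|u^e - u_h\|_{A_h} \lesssim h^{p}\|u\|_{H^{p+1}(\Gamma)} + h^{p+1}\|f\|_{L^2(\Gamma)}$. I expect no genuine obstacle here: all of the analytic difficulty has been front-loaded into the verification of P1--P2 and into the geometric consistency lemma, so the theorem is essentially their synthesis. The only point worth checking explicitly is that the consistency estimate is stated directly in supremum form with $\|v\|_{A_h}$ in the denominator, so that it slots into the Strang bound without any rescaling, and that the elliptic stability estimate $\|\nabla_\Gamma u\|_{L^2(\Gamma)} \lesssim \|f\|_{L^2(\Gamma)}$ used inside Lemma~\ref{lem:consistency} justifies keeping both the $h^{p}$ and the lower-order $h^{p+1}$ contributions in the final statement.
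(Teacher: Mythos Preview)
Your proposal is correct and matches the paper's own proof essentially verbatim: the paper's argument is a one-line synthesis that invokes the Strang Lemma and then plugs in the interpolation bound (Lemma~\ref{lem:interp}) and the consistency bound (Lemma~\ref{lem:consistency}), exactly as you describe. Your additional remarks about which properties (P1, P2) and geometric estimates feed into those lemmas are accurate but already absorbed into the lemmas themselves.
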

\begin{proof}
Using the Strang Lemma followed by the bounds on the interpolation error (\ref{eq:interpolenergysurf}) 
and the consistency error (\ref{eq:consistency}) we get the desired bound.
\end{proof}

\begin{thm}\label{thm:apriori-L2} Let $u \in H^{p+1}(\Gamma)\cap H^1_0(\Gamma)$ be the solution of \eqref{eq:LB} and $u_h\in V_{h}^p$ the finite element approximation defined by \eqref{eq:Ah}. If assumptions A1-A3 hold and the stabilization term has properties P1-P5 then, there is a constant independent of the mesh size $h$ such that the following error bound holds 
\begin{equation}
\|u^e-u_h\|_{L^2(\Gamma_h)}  \lesssim h^{p+1}  \| u  \|_{H^{p+1}(\Gamma)} +h^{p+1} \|f\|_{L^2(\Gamma)}. 
\end{equation}
\end{thm}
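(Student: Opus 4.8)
The plan is to run a duality (Aubin--Nitsche) argument. Write $e = u^e - u_h$ and let $e^l$ denote its lift to $\Gamma$; by the norm equivalence \eqref{eq:normequ} it suffices to bound $\|e^l\|_{L^2(\Gamma)}$. I introduce the dual problem: find $\phi \in H^1_0(\Gamma)$ such that $a(w,\phi) = (e^l,w)_{L^2(\Gamma)}$ for all $w \in H^1_0(\Gamma)$. By the elliptic regularity estimate \eqref{eq:elliptic-reg} (with $s=0$) we have $\phi \in H^2(\Gamma)$ with $\|\phi\|_{H^2(\Gamma)} \lesssim \|e^l\|_{L^2(\Gamma)}$, which is exactly the regularity that Property P3 requires. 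Since $e^l$ need not be exactly mean-zero on $\Gamma$ (the lift distorts the mean through $|B|$), I would test against $e^l$ minus its mean value; constants lie in the kernel of $a$ and of every derivative-based form below, so this adjustment only contributes a higher-order term.

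Starting from $\|e^l\|^2_{L^2(\Gamma)} = a(e^l,\phi)$, I set $\pi_h\phi = \pi_h^p\phi^e$, adjusted by a constant so that $\pi_h\phi \in V_h^p$, and decompose
\[
a(e^l,\phi) = \underbrace{[a(e^l,\phi) - a_h(e,\phi^e)]}_{T_1} + \underbrace{a_h(e,\phi^e-\pi_h\phi)}_{T_2} + \underbrace{A_h(e,\pi_h\phi)}_{T_3} - \underbrace{s_h(e,\pi_h\phi)}_{T_4}.
\]
For $T_1$ (dual geometric consistency) I use $\nablas e^l = B^{-T}\nablash e$ and $\nablas\phi = B^{-T}\nablash\phi^e$, change the domain of integration to $\Gammah$ via $ds = |B|\dsh$, and argue precisely as for term $I_I$ in Lemma~\ref{lem:consistency} using \eqref{B-uniform-bounds} and \eqref{eq:B-detbound}, obtaining $|T_1| \lesssim h^{p+1}\|e\|_{a_h}\|\phi\|_{H^1(\Gamma)}$, which is of higher order. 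For $T_2$ (dual interpolation error) I apply Cauchy--Schwarz, the energy error bound of Theorem~\ref{thm:apriori-energy}, and the standard $H^2$-interpolation estimate $\|\phi^e-\pi_h\phi\|_{a_h} \lesssim h\|\phi\|_{H^2(\Gamma)}$ to get $|T_2| \lesssim h^{p+1}(\ldots)\|e^l\|$. For $T_3$ I use that $\pi_h\phi \in V_h^p$ and that $u_h$ solves \eqref{eq:Ah}, so $A_h(e,\pi_h\phi) = A_h(u^e,\pi_h\phi) - l_h(\pi_h\phi)$ is the consistency error evaluated at $v=\pi_h\phi$.

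The decisive point, and the main obstacle, is that the aggregated consistency bound of Lemma~\ref{lem:consistency} with $\|v\|_{A_h}$ in the numerator is too crude here, as it would only deliver $O(h^p)\|e^l\|$ in $T_3$ and thus lose the gain of the duality argument. Instead I would use the \emph{sharp split} proved inside that lemma, namely $|a_h(u^e,v)-l_h(v)| \lesssim h^{p+1}\|f\|_{L^2(\Gamma)}\|v\|_{a_h}$ together with the separate estimate $|s_h(u^e,v)| \le \|u^e\|_{s_h}\|v\|_{s_h} \lesssim h^p\|u\|_{H^{p+1}(\Gamma)}\|v\|_{s_h}$ coming from Property P2. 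The crucial extra power of $h$ is then supplied by Property P3, which gives $\|\pi_h\phi\|_{s_h} \lesssim h\|\phi\|_{H^2(\Gamma)} \lesssim h\|e^l\|$, while $\|\pi_h\phi\|_{a_h} \lesssim \|\phi\|_{H^1(\Gamma)} \lesssim \|e^l\|$; together these yield $|T_3| \lesssim h^{p+1}(\|u\|_{H^{p+1}(\Gamma)}+\|f\|_{L^2(\Gamma)})\|e^l\|$. The last term satisfies $|T_4| \le \|e\|_{s_h}\|\pi_h\phi\|_{s_h} \lesssim h^p(\ldots)\cdot h\|e^l\|$, again by the energy bound and P3. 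Summing $T_1$ through $T_4$, dividing by $\|e^l\|_{L^2(\Gamma)}$, and returning to the $\Gammah$-norm via \eqref{eq:normequ} produces the claimed $O(h^{p+1})$ estimate.
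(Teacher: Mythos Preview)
Your argument is correct and follows essentially the same Aubin--Nitsche strategy as the paper: a dual problem on $\Gamma$ with $H^2$-regularity, a decomposition that isolates dual interpolation, geometric consistency, data/stabilization consistency, and the crucial use of the sharp split inside Lemma~\ref{lem:consistency} together with Property~P3 to recover the missing power of~$h$ in the stabilization contribution. The paper organizes the four pieces slightly differently---it subtracts the $\Gamma$-mean of $u_h^l$ explicitly (your $\tilde u_h$) and bounds that correction separately via $\|1-|B|\|_{L^\infty}\lesssim h^{p+1}$, and it groups the geometric consistency with $\tilde u_h$ rather than with $e$---but the substance and the key estimates are the same.
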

\begin{proof} 
The proof is similar to the proof of the $L^2$-error estimate in \cite{BurHanLar15} for $p=1$.
Let $e_h=u^e-u_h|_{\Gamma_h}$ and its lift on $\Gamma$ be $e_h^l=u-u_h^l$. Recall that $\int_{\Gamma} u \,ds=0$ and $\int_{\Gamma_h} u_h \,ds_h=0$. We now define $\tilde{u}_h\in V_h^p$ as
\begin{equation}\label{eq:uhtilde}
\tilde{u}_h=u_h-|\Gamma|^{-1} \int_\Gamma u_h^l ds
\end{equation}
so that we have $\int_\Gamma \tilde{u}_h^l=0$. By adding and subtracting $\tilde{u}_h^l$ and using the triangle inequality we obtain
\begin{equation}\label{eq:ehl}
\|e_h^l\|_{L^2(\Gamma)} 
\leq \underbrace{\|u-\tilde{u}_h^l \|_{L^2(\Gamma)}}_{I} +\underbrace{\|\tilde{u}_h^l-u_h^l \|_{L^2(\Gamma)}}_{II}.
\end{equation}
\paragraph{Term $\bfI$.} Consider the dual problem:
\begin{equation} \label{eq:dual}
a(v,\phi)=(\psi,v)_\Gamma, \qquad \psi \in L^2(\Gamma)\setminus \IR.
\end{equation} 
It follows from the Lax-Milgram lemma that there exists a unique solution in 
$H^1(\Gamma)\setminus \IR$ and we also have the elliptic regularity estimate 
\begin{equation}\label{eq:ellipticreg}
\|\phi\|_{H^2(\Gamma)}\lesssim \| \psi \|_{L^2(\Gamma)}.
\end{equation}
Setting $v = u-\tilde{u}_h^l$ in (\ref{eq:dual}), adding and subtracting an interpolant, and 
using the weak formulations in \eqref{eq:LB} and \eqref{eq:Ah} we obtain
\begin{align}
(u-\tilde{u}_h^l, \psi )_\Gamma
&=a(u-\tilde{u}_h^l,\phi)
\\
&=a(u-\tilde{u}_h^l,\phi-(\pi_h^p \phi^e)^l)+a(u-\tilde{u}_h^l,(\pi_h^p \phi^e)^l)
\\
&=\underbrace{a(u-\tilde{u}_h^l,\phi-(\pi_h^p \phi^e)^l)}_{I_I}+\underbrace{l((\pi_h^p \phi^e)^l)-l_h(\pi_h^p \phi^e)}_{I_{II}} 
\\
&\qquad +\underbrace{a_h(\tilde{u}_h,\pi_h^p \phi^e) -a(\tilde{u}_h^l,(\pi_h^p \phi^e)^l)}_{I_{III}}+\underbrace{s_h(\tilde{u}_h,\pi_h^p. \phi^e)}_{I_{IV}}
\end{align}  
\paragraph{Term $\bfI_{\bfI}$.}
Using the Cauchy-Schwarz inequality, the norm equivalences (see Section \ref{sec:prel}), 
and the energy norm estimate (Theorem \ref{thm:apriori-energy}) we obtain
\begin{align}
I_{I}&\lesssim \|\nabla_\Gamma (u-\tilde{u}_h^l) \|_{L^2(\Gamma)}\|\nabla_\Gamma (\phi-(\pi_h^p \phi^e)^l) \|_{L^2(\Gamma)} 
\\
&\lesssim
\|u^e -u_h \|_{a_h} \|\phi^e-\pi_h^p \phi^e \|_{a_h} 
\\
&\lesssim  (h^{p}\|u\|_{H^{p+1}(\Gamma)}+h^{p+1}\|f\|_{L^{2}(\Gamma)}) \|\phi^e-\pi_h^p \phi^e \|_{a_h}.
\end{align}
The interpolation error estimate \eqref{eq:interpolsurf} with $s=1$ yields
\begin{equation} \label{eq:phiinterp}
\|\phi^e-\pi_h^p \phi^e \|_{a_h} \lesssim h \|\phi\|_{H^2(\Gamma)}
\end{equation}
and finally using the elliptic regularity estimate \eqref{eq:ellipticreg} we get  
\begin{equation}
I_{I}\lesssim  (h^{p+1}\|u\|_{H^{p+1}(\Gamma)}+h^{p+2}\|f\|_{L^{2}(\Gamma)}) \| \psi \|_{L^2(\Gamma)}.
\end{equation}
\paragraph{Term $\bfI_{\bfI\bfI}-\bfI_{\bfI\bfI\bfI}$.}
We use the estimate of Term $I$ in the proof of Lemma \ref{lem:consistency},  
together with the following stability estimate
\begin{equation} \label{eq:stabest}
\| \nabla_{\Gamma} \tilde{u}_h^l\|_{L^2(\Gamma)}=\| \nabla_{\Gamma} u_h^l\|_{L^2(\Gamma)} \lesssim \|u_h\|_{a_h}\lesssim  \|f_h\|_{L^2(\Gamma_h)} \lesssim \|f\|_{L^2(\Gamma)}
\end{equation}
to get
\begin{align}
|I_{II}+I_{III}| 
&\lesssim (h^{p+1} \| \nabla_{\Gamma} \tilde{u}_h^l\|_{L^2(\Gamma)}+h^{p+1} \| f\|_{L^{2}(\Gamma)})\|\pi_h^p \phi^e\|_{a_h}.
\lesssim h^{p+1}  \| f\|_{L^{2}(\Gamma)} \|\pi_h^p \phi^e\|_{a_h}
\end{align} 
Adding and subtracting an interpolant, using the interpolation error estimate 
\eqref{eq:phiinterp} followed by the elliptic regularity estimate \eqref{eq:ellipticreg} yield
\begin{align}
\|\pi_h^p \phi^e\|_{a_h}\lesssim \|\pi_h^p \phi^e-\phi^e\|_{a_h}+\|\phi^e\|_{a_h}\lesssim \|\phi\|_{H^2(\Gamma)}\lesssim \| \psi \|_{L^2(\Gamma)}.
\end{align}
Thus, 
\begin{align}
|I_{II}+I_{III}| &\lesssim h^{p+1}  \| f\|_{L^{2}(\Gamma)} \| \psi \|_{L^2(\Gamma)}.
\end{align}
\paragraph{Term $\bfI_{\bfI\bfV}$.} We have
\begin{equation}
|I_{IV}|=|s_h(u_h,\pi_h^p \phi^e)|\lesssim \|u_h \|_{s_h}\|\pi_h^p \phi^e\|_{s_h}.
\end{equation}
Using the energy norm error estimate, Theorem \ref{thm:apriori-energy}, and 
Property P2 of the stabilization  we obtain 
\begin{align}
\|u_h \|_{s_h}
&\lesssim \|u_h -u^e\|_{s_h}+ \| u^e \|_{s_h}
\\
&\lesssim h^{p}\|u\|_{H^{p+1}(\Gamma)}+h^{p+1}\|f\|_{L^{2}(\Gamma)}+h^{p}\|u\|_{H^{p+1}(\Gamma)}
\end{align}
and using Property P3 and the elliptic regularity estimate \eqref{eq:ellipticreg} 
we obtain
\begin{equation}
\|\pi_h^p \phi^e\|_{s_h} \lesssim h \| \phi \|_{H^2(\Gamma)} \lesssim h \| \psi \|_{L^2(\Gamma)}.
\end{equation}
We thus have
\begin{equation}
|I_{IV}| 
\lesssim
( h^{p+1}\| u \|_{H^{p+1}(\Gamma_h)}+h^{p+2}\|f\|_{L^{2}(\Gamma)})
\| \psi \|_{L^2(\Gamma)}.
\end{equation}

\paragraph{Final Estimate of Term $\bfI.$} Collecting the estimates of Terms $I_I-I_{IV}$ 
and taking $\psi=u-\tilde{u}_h^l/ \|u-\tilde{u}_h^l\|_{L^2(\Gamma)}$ we obtain
\begin{equation}
I=\|u-\tilde{u}_h^l \|_{L^{2}(\Gamma)} \lesssim h^{p+1}\| u \|_{H^{p+1}(\Gamma_h)}+h^{p+1}\|f\|_{L^{2}(\Gamma)}.
\end{equation}

\paragraph{Term $\bfI\bfI$.} Using the definition \eqref{eq:uhtilde} of $\tilde{u}_h$, adding $\int_{\Gamma_h} u_h \,ds_h=0$, changing the domain of integration and using related bounds, 
employing the Poincar\'e inequality \eqref{eq:poincare-Kh}, and the stability estimate \eqref{eq:stabest}, we obtain
\begin{align} 
\|\tilde{u}_h^l-u_h^l \|_{L^2(\Gamma)}
&=\left|\int_\Gamma u_h^l ds- \int_{\Gamma_h} u_h ds_h \right|
\\
&\lesssim \left|\int_{\Gamma_h} u_h(|B|-1) ds_h \right|
\\ 
& \lesssim h^{p+1} \|u_h \|_{L^2(\Gamma_h)}
\\
&\lesssim h^{p+1} \|u_h \|_{a_h}
\\
&\lesssim h^{p+1} \|f\|_{L^2(\Gamma)}. 
\end{align}

\paragraph{Conclusion of the Proof.} Using the estimates of Terms $I$ and $II$ in equation 
\eqref{eq:ehl} and the equivalence $\|e_h ^l\|_{L^2(\Gamma)}\sim \| e_h \|_{L^2(\Gammah)}$ 
conclude the proof. 
\end{proof}

\section{Numerical Examples}\label{sec:numexpS}
We consider three different problems. The mesh on the domain where the interface is embedded is always generated independently of the position of the interface $\Gamma$ and we use Lagrange basis functions.

\subsection{The Laplace-Beltrami Problem}
Let the interface $\Gamma$ be a circle of radius 1 centered at the origin. We generate a uniform triangular mesh with $h=h_{x_1}=h_{x_2}$ on the computational domain: $[-1.5, \ 1.5]\times [-1.5, \ 1.5]$. 
A right-hand side $f$ to equation~\eqref{eq:LB} is calculated so that
\begin{equation} \label{eq:sol2}
u(x)=\frac{x_1^3x_2^3}{(x_1^2+x_2^2)^3}
\end{equation} 
is the exact solution. We discretize~\eqref{eq:LB} using the proposed CutFEM with Lagrange basis functions of degree p=1, 2, and 3.

In Fig.~\ref{fig:errordiffelem} we show the error and the spectral condition number of the linear systems as a function of the mesh size $h$. The error measured in the L$^2$-norm behaves as $\mathcal{O}(h^{p+1})$ and in the $H^1$-norm as $\mathcal{O}(h^{p})$, as expected. The condition number behaves as $\mathcal{O}(h^{-2})$ independent of the polynomial degree and how the interface cuts the background mesh. 
The magnitude of the error and also the condition number depends on the stabilization constants in \eqref{eq:sh-face}-\eqref{eq:sh-gamma} and also on the choice of basis functions. For the results in Fig.~\ref{fig:errordiffelem} we have used $\gamma=1$, $c_{F,j}=c_{\Gamma,j}=2.5\cdot 10^{-j}$, $j=1, \cdots, p$. We have not optimized this choice of parameters and other choices may give better results for example smaller condition numbers with almost the same error. We show this for example in Fig.~\ref{fig:errordiffstabP3} and~\ref{fig:diffparamP3}.  Other stabilization constants or basis functions can also give better scaling of the condition number with respect to the polynomial degree.  

\begin{figure}\centering
\includegraphics[width=0.45\textwidth]{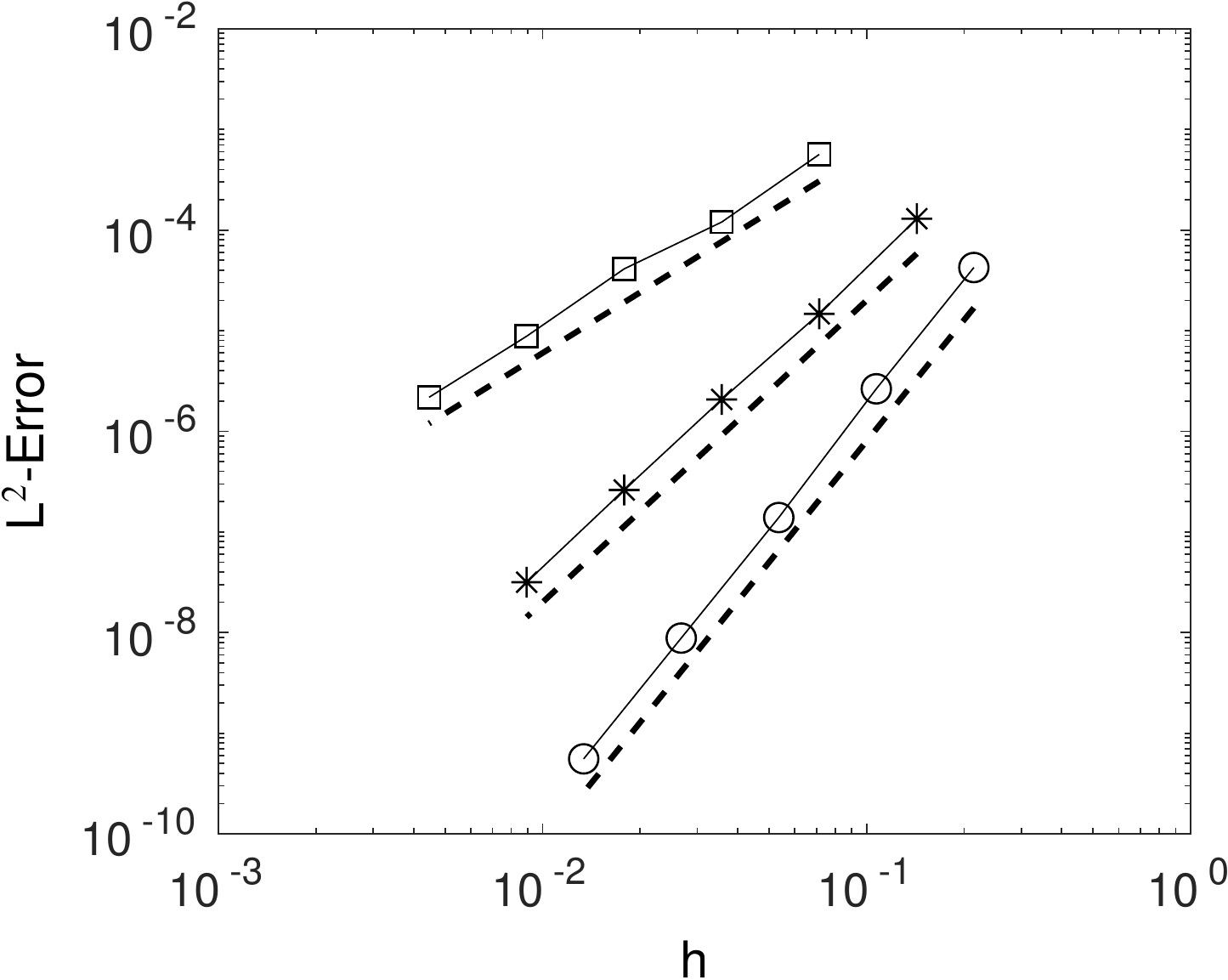} \\
\includegraphics[width=0.45\textwidth]{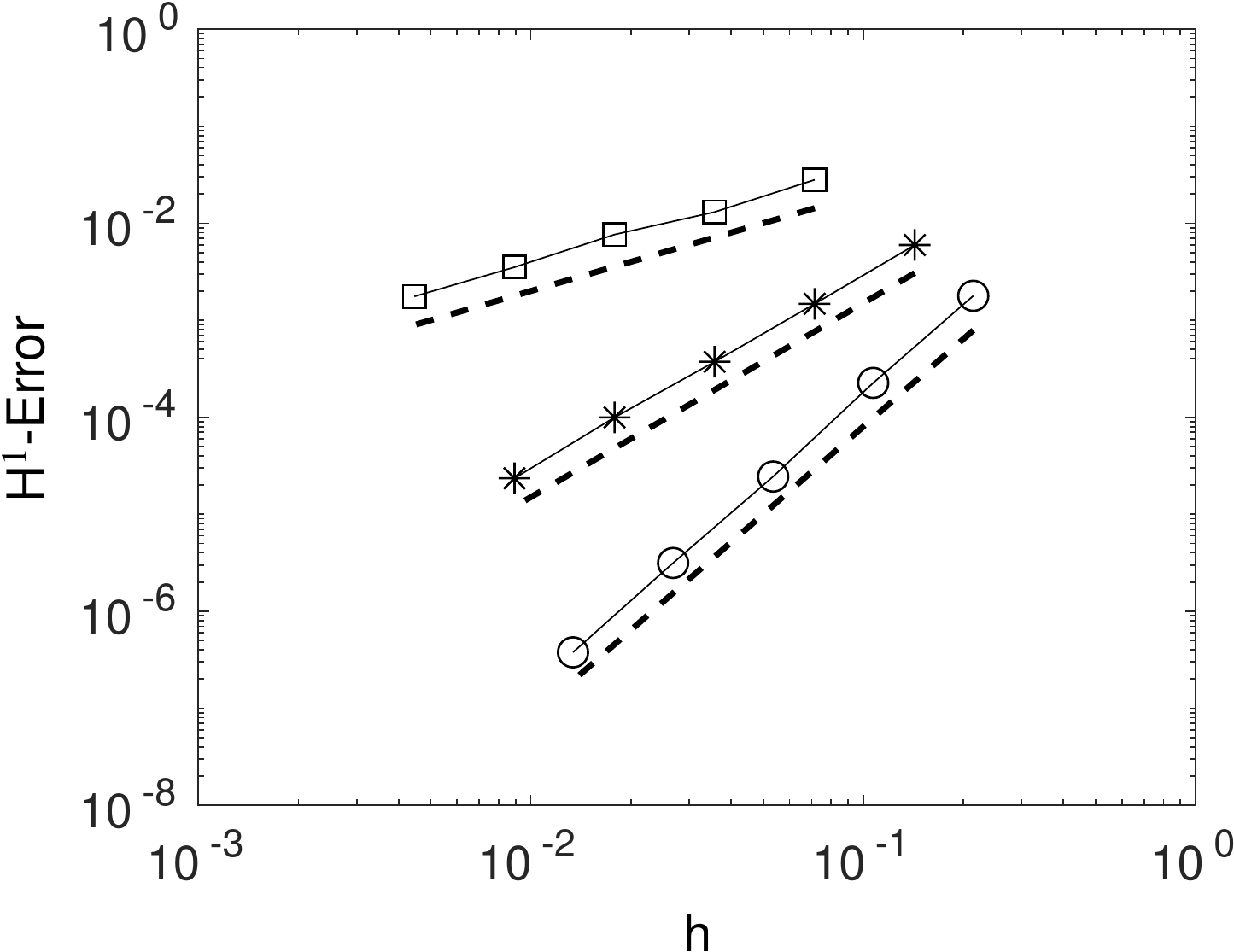} \\
\includegraphics[width=0.45\textwidth]{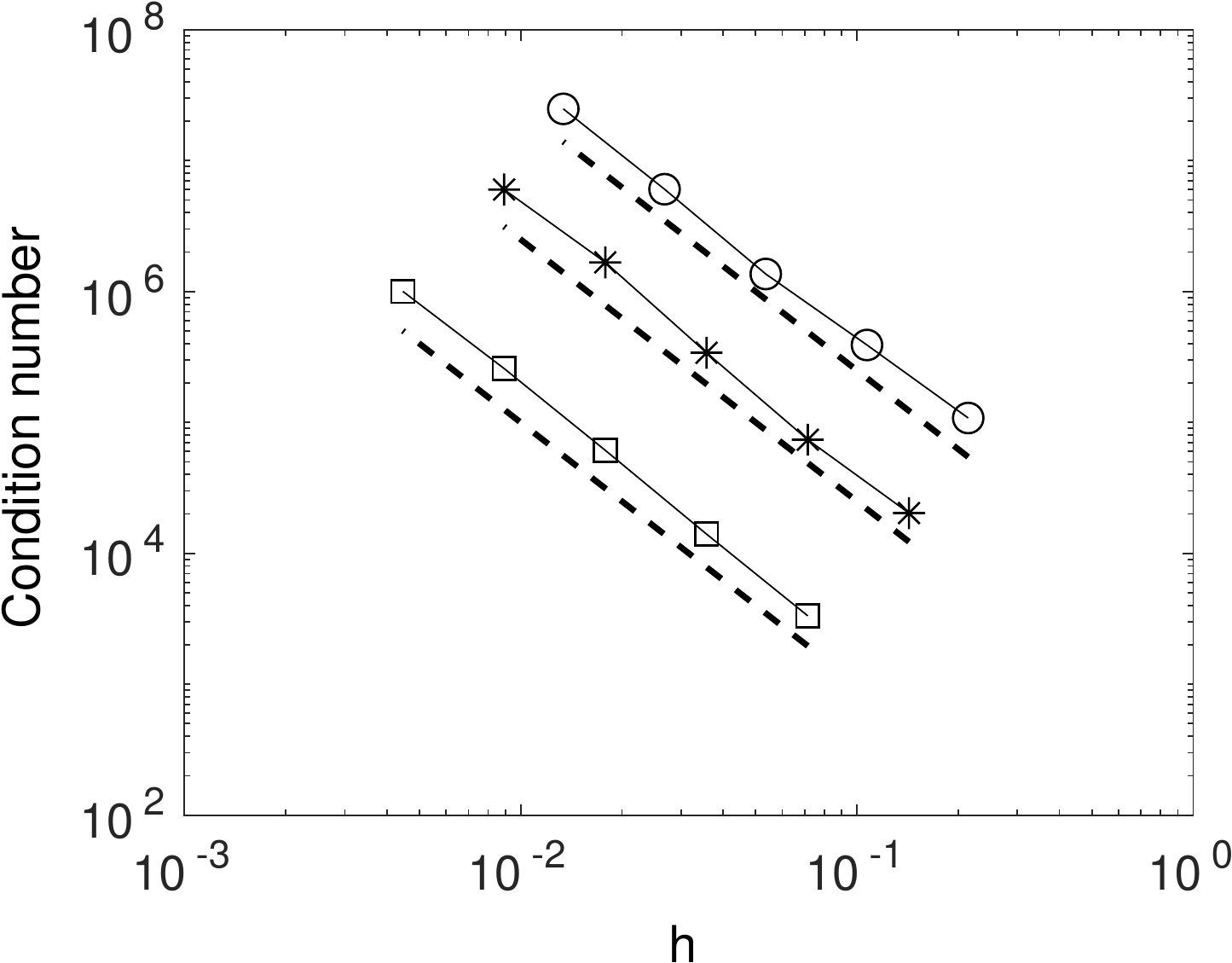} 
\caption{The Laplace-Beltrami problem:
 The error and condition number versus mesh size $h$ for different degrees of polynomials, $p$, in the discretization. Squares: p=1. Stars: p=2. Circles: p=3.  Top: The error measured in the L$^2$-norm versus mesh size $h$.  The dashed lines are indicating the expected rate of convergence and are proportional to  $h^{p+1}$.  Middle: The error measured in the $H^1$-norm versus mesh size $h$.  The dashed lines are indicating the expected rate of convergence and are proportional to $h^p$.  Bottom: The condition number versus mesh size $h$. The dashed lines are all proportional to $h^{-2}$.  \label{fig:errordiffelem}}
\end{figure}

In Fig.~\ref{fig:errordiffstabP1} and~\ref{fig:errordiffstabP3} we show the effect of the stabilization on the error and the condition number for linear (p=1) and cubic elements (p=3), respectively. We show the convergence of the error with respect to the $L^2$-norm. The results are similar when the error is measured in the $H^1$-norm. 
We compare the proposed stabilization with the stabilization terms~\eqref{eq:stabface} and \eqref{eq:normalgradel} and also show the condition number after diagonal scaling. 
\begin{figure}
\centering
\includegraphics[width=0.43\textwidth]{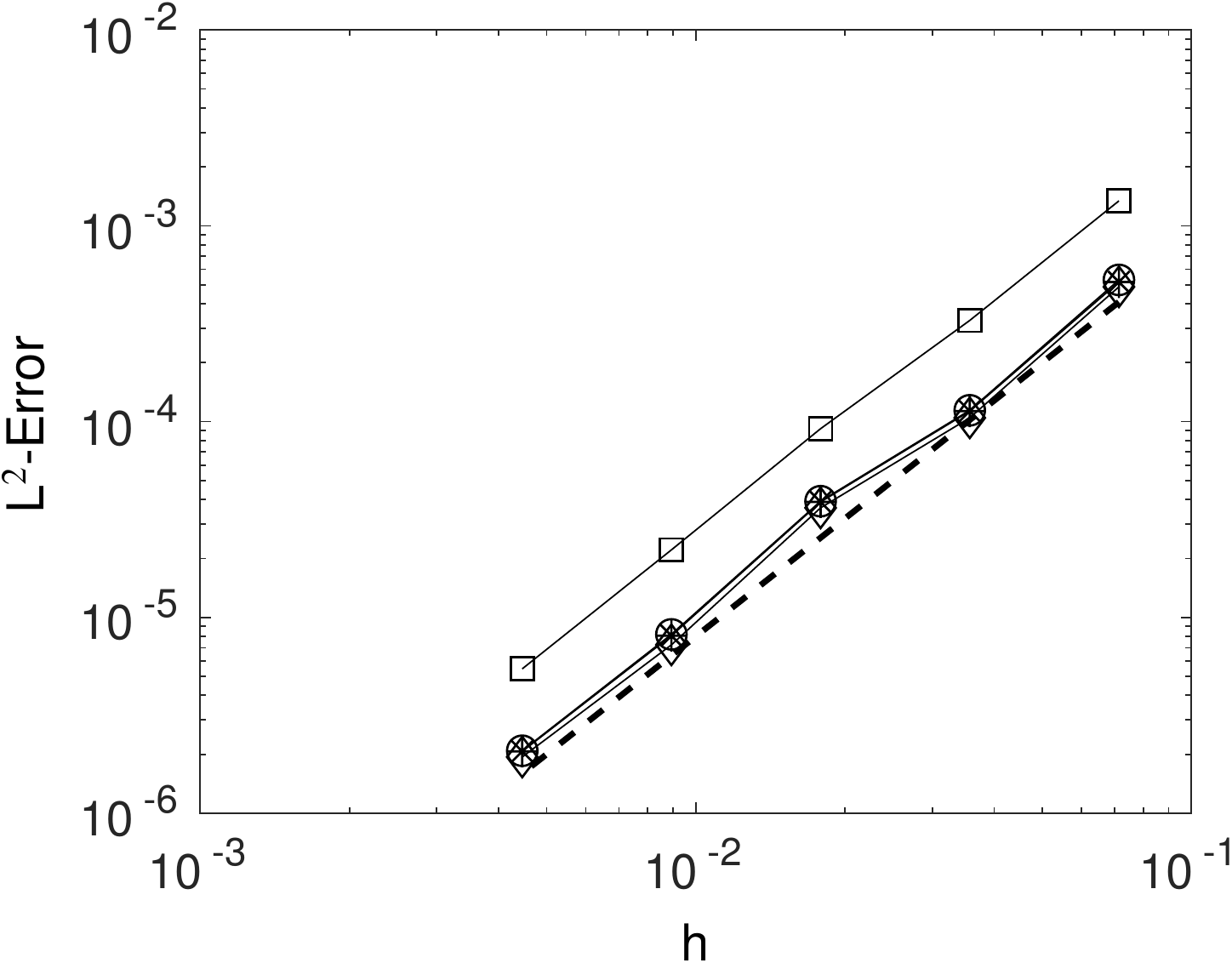} \\
\vspace{0.4cm}
\includegraphics[width=0.4\textwidth]{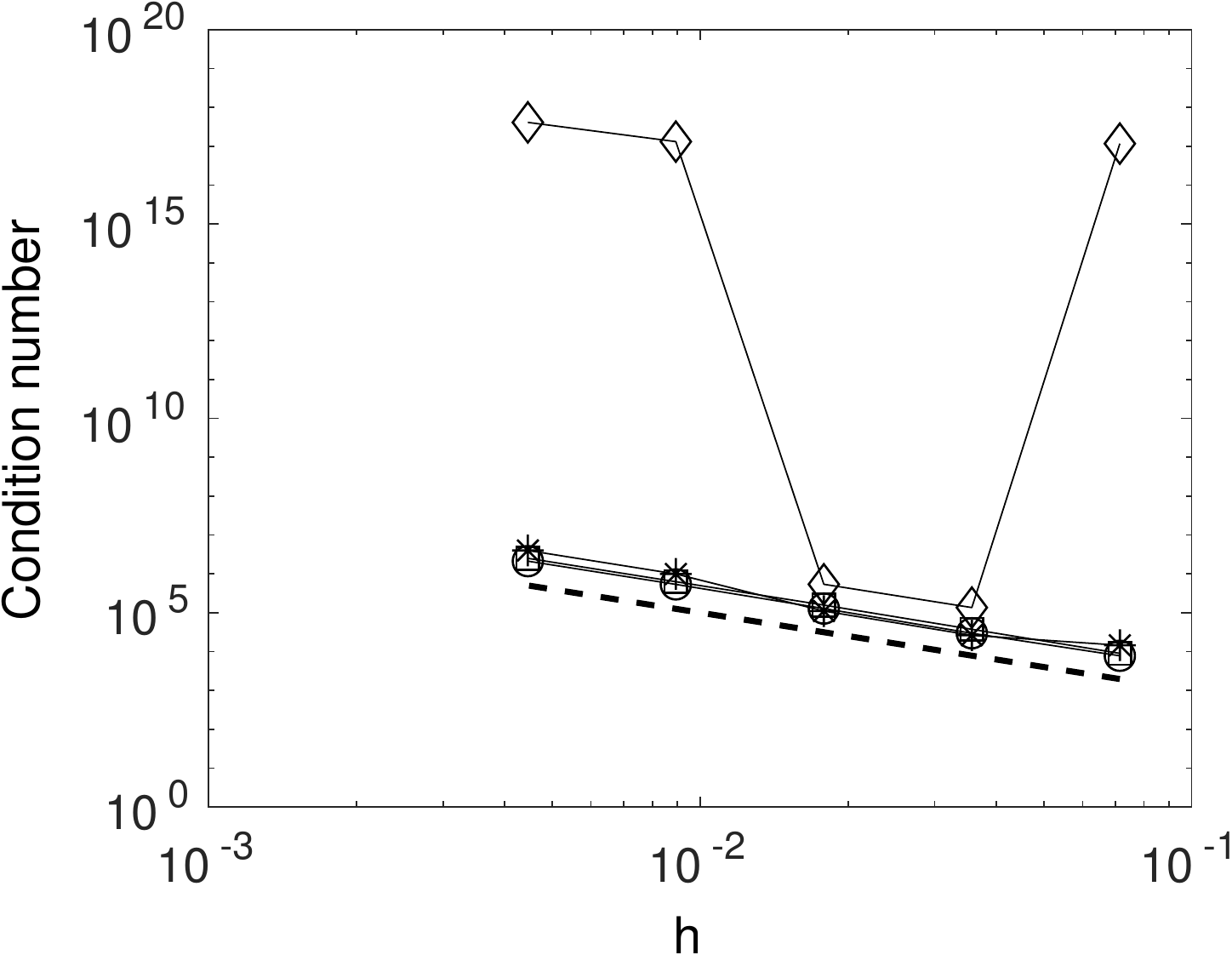} \hspace{0.3cm}
 \includegraphics[width=0.4\textwidth]{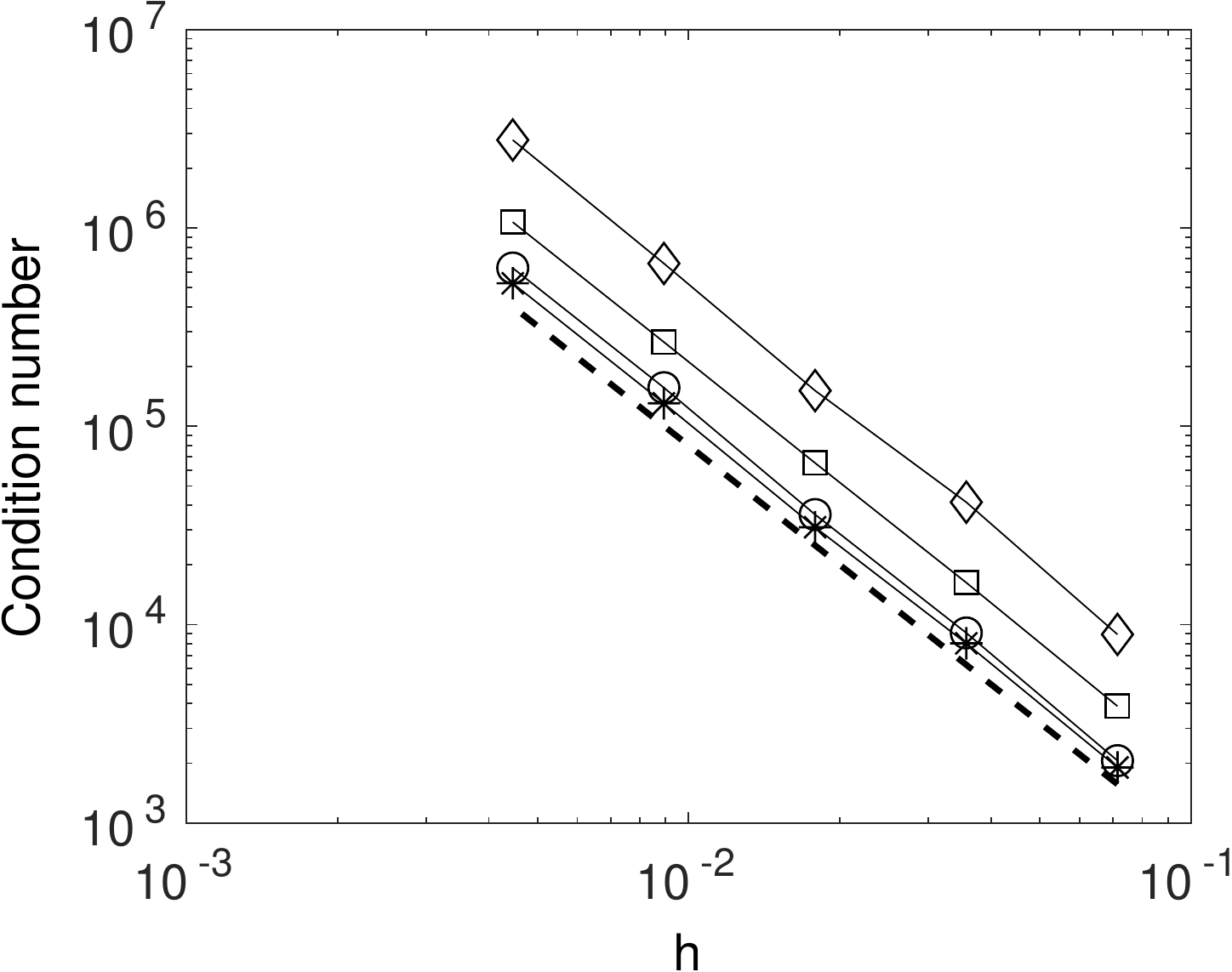} 
\caption{The Laplace-Beltrami problem:
 The error and condition number versus mesh size $h$ using different stabilization terms. Linear elements are used, i.e. p=1. Diamonds: no stabilization is added. Circles: the proposed stabilization with $\gamma=1$. Squares: the pure face stabilization~\eqref{eq:stabface}. Stars: the normal gradient stabilization~\eqref{eq:normalgradel} with $\alpha=1$. Top: The error measured in the $L^2$-norm versus mesh size $h$.  The dashed line is indicating the expected rate of convergence and is proportional to  $h^{2}$.  Bottom left: The spectral condition number versus mesh size $h$. The dashed line is proportional to $h^{-2}$. Bottom right: The spectral condition number after diagonal scaling versus mesh size $h$. The dashed line is proportional to $h^{-2}$. \label{fig:errordiffstabP1}}
\end{figure} 

We see in Fig.~\ref{fig:errordiffstabP1} that for linear elements the errors, measured in the $L^2$-norm, using the different stabilization terms and also no stabilization are very similar but we clearly see that when no stabilization is used the condition number can be  extremely large depending on the position of the interface relative the background mesh. We have used $c_{F,1}=c_{\Gamma,1}=10^{-1}$ and $\gamma=1$ in the proposed stabilization. Note that the stabilization term \eqref{eq:stabface} developed in~\cite{BurHanLar15}  can be written in the form of the proposed stabilization, see \eqref{eq:sh}-\eqref{eq:sh-gamma}, but with $c_{\Gamma,j}=0$ for all $j$ and $\gamma=0$. For this stabilization we have chosen $c_{F,1}=10^{-1}$ but since $\gamma=0$ we get slightly larger errors than the other alternatives. Decreasing the stabilization constant $c_{F,1}$ will decrease the error. In the stabilization term \eqref{eq:normalgradel} we have chosen $c_{\mcT}=10^{-1}$ and $\alpha=1$. We see in Fig.~\ref{fig:errordiffstabP1} that an alternative to adding stabilization terms is to do a simple diagonal scaling.

For higher order elements than linear the resulting linear systems from unstabilized CutFEM are severely ill-conditioned and neither a simple diagonal scaling alone or a pure face stabilization term such as \eqref{eq:stabface} improves the condition number significantly, see Fig.~\ref{fig:errordiffstabP3}. When the condition number becomes to large the convergence fails, the error is dominated by roundoff errors and does not decrease with mesh refinement. In general, when the stabilization is not enough the behavior of the error depends on how the geometry cuts the background mesh. Therefore, if the approximation of the interface is slightly changed the magnitude of the error and the convergence shown in Fig.~\ref{fig:errordiffstabP3}, where cubic elements are used, may change a lot for the unstabilized method and the method with only face stabilization.
With the proposed stabilization and also with the stabilization term \eqref{eq:normalgradel} we obtain as expected optimal convergence orders and the condition number of the linear systems scales as $\mathcal{O}(h^{-2})$ independent of how the interface cuts the background mesh.
In Fig.~\ref{fig:errordiffstabP3} we also show results with different constants in the stabilization terms. We see that the magnitude of the error decreases as the stabilization term gets weaker. However, when the stabilization is too weak the condition number is large and roundoff errors dominate.
\begin{figure}\centering
\includegraphics[width=0.45\textwidth]{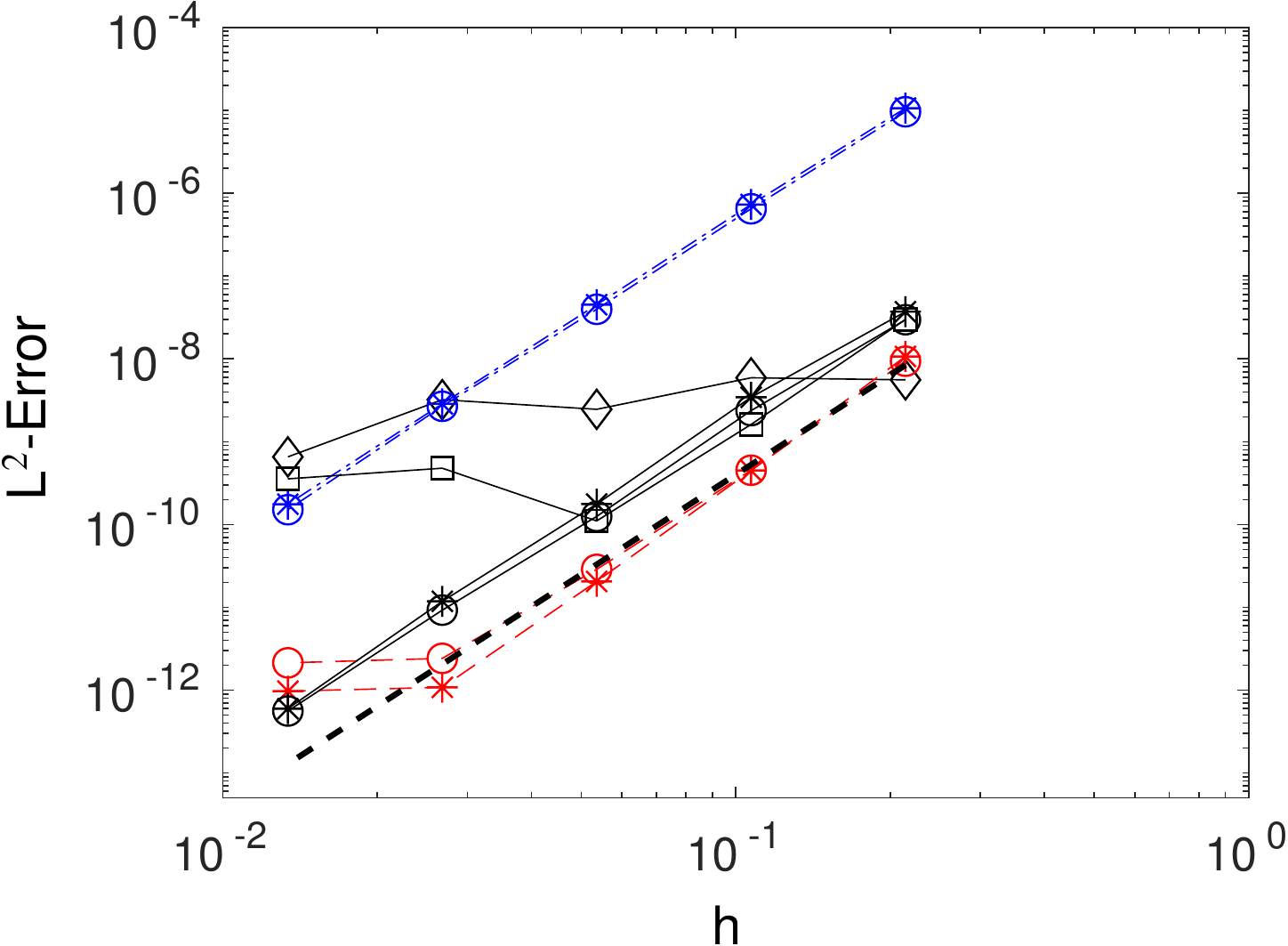}\\
\vspace{0.2cm}
\includegraphics[width=0.4\textwidth]{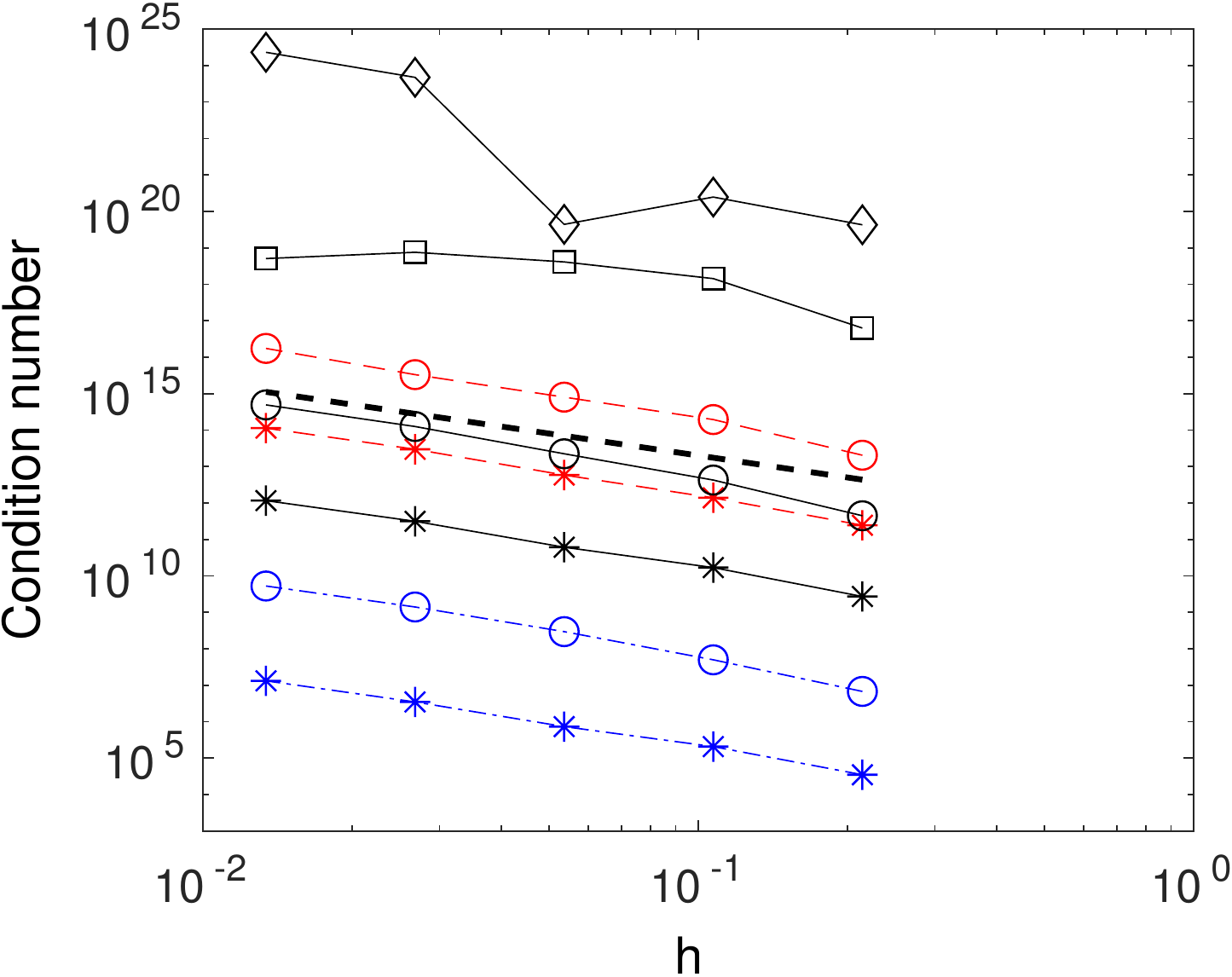} \hspace{0.3cm}
\includegraphics[width=0.4\textwidth]{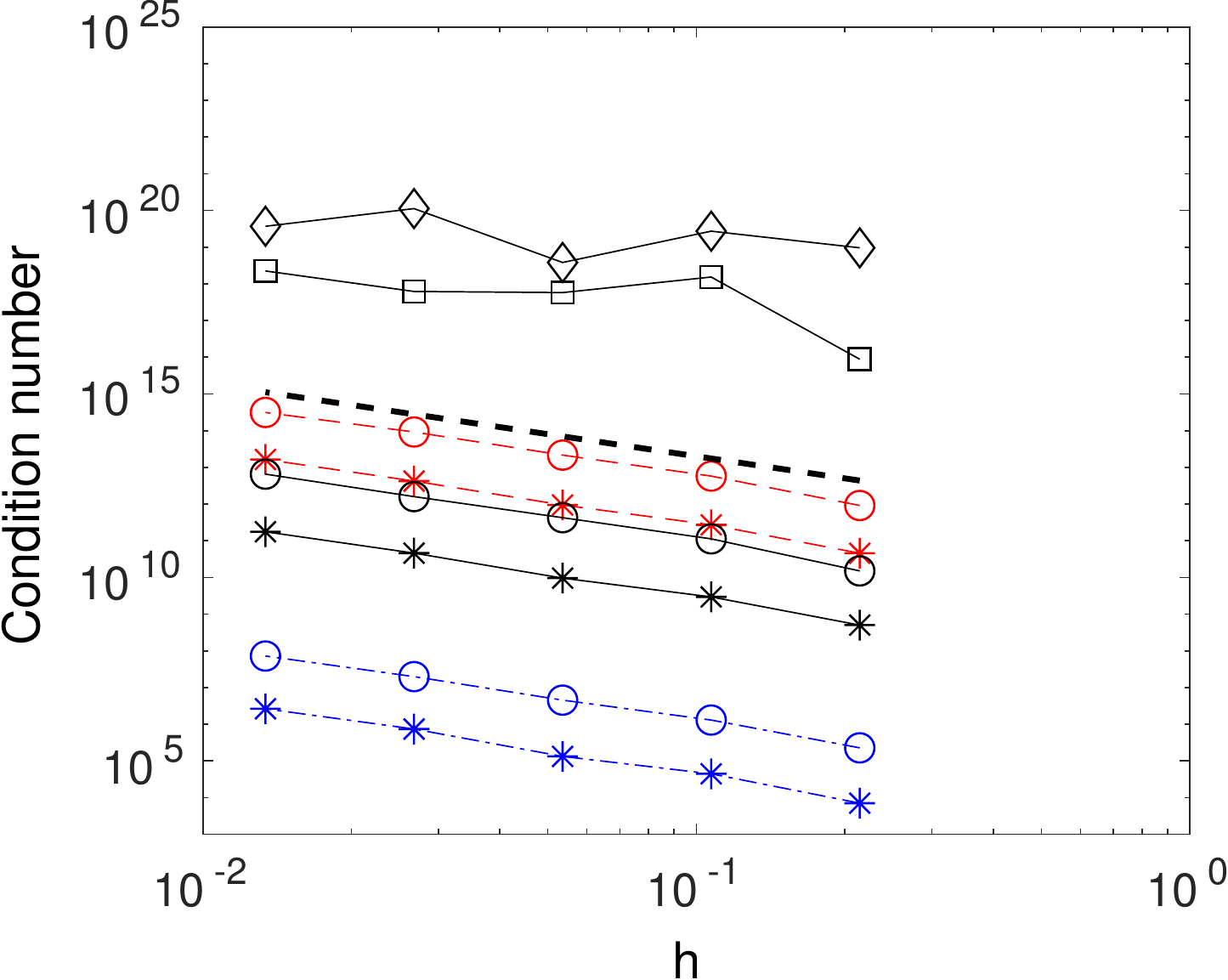} 
\caption{The Laplace-Beltrami problem: 
 The error and condition number versus mesh size $h$ using different stabilization terms. Cubic elements are used, i.e. p=3. Diamonds: no stabilization is added. Circles: the proposed stabilization with $\gamma=1$. Squares: pure face stabilization. Stars: the stabilization term \eqref{eq:normalgradel} with $\alpha=1$. Top: The error measured in the $L^2$-norm versus mesh size $h$.  The dashed line is indicating the expected rate of convergence and is proportional to $h^{4}$.  Bottom left: The spectral condition number versus mesh size $h$. The dashed line is proportional to $h^{-2}$. Bottom right: The spectral condition number after diagonal scaling versus mesh size $h$. The dashed line is proportional to $h^{-2}$. Symbols connected with dashed dotted lines have larger constants in the stabilization terms than symbols connected with solid lines and symbols connected with dashed lines which have the smallest constants in the stabilization terms.
  \label{fig:errordiffstabP3}}
\end{figure}

In Fig.~\ref{fig:errordiffstabP3} we see that the proposed stabilization and the stabilization term~\eqref{eq:normalgradel} resulted in very similar errors however the condition number is smaller when the stabilization term \eqref{eq:normalgradel} is added to the weak form. The difference in the condition number decreases after a diagonal scaling of both matrices. We emphasize that all these results depend on the choice of parameters in the stabilization terms and the basis functions used. We have not optimized the parameters in the stabilization terms. For example, using cubic elements and the proposed stabilization term there are six constants and the parameter $\gamma$ that one can modify to optimize the magnitude of the error and the condition number.

Next we vary both $\gamma$ and the constants $c_{F,j}$ and $c_{\Gamma,j}$ in the proposed stabilization term and study the error and the condition number in Fig.~\ref{fig:diffparamP1} and~\ref{fig:diffparamP3} for linear and cubic elements, respectively. By decreasing the constants $c_{F,j}$ and $c_{\Gamma,j}$ and/or increasing $\gamma$ we weaken the stabilization.  In general, the error decreases while the condition number increases as the stabilization is weakened. The condition number also increases when the stabilization becomes too strong. We see this for the linear elements as well as the cubic elements. A strengthening of the stabilization sometimes decreases the condition number significantly but only increases the error slightly. Compare for example the black circles connected with a dashed dotted line, $\gamma=0$ in Fig.~\ref{fig:diffparamP3} with the black circles connected with a solid line $\gamma=1$ in the same figure. By lowering $\gamma$ the error increases with a factor of $2.5$ in this case but the condition number decreases with a factor $850$. Comparing the results in Fig.~\ref{fig:diffparamP1} and~\ref{fig:diffparamP3} we see that the choice of parameters in the stabilization term have larger impact on the cubic elements than the linear elements. We also see that with other choices of parameters than the ones used for the results in Fig.~\ref{fig:errordiffelem} the condition number can scale much better with respect to the polynomial degree.

\begin{figure}\centering
\includegraphics[width=0.45\textwidth]{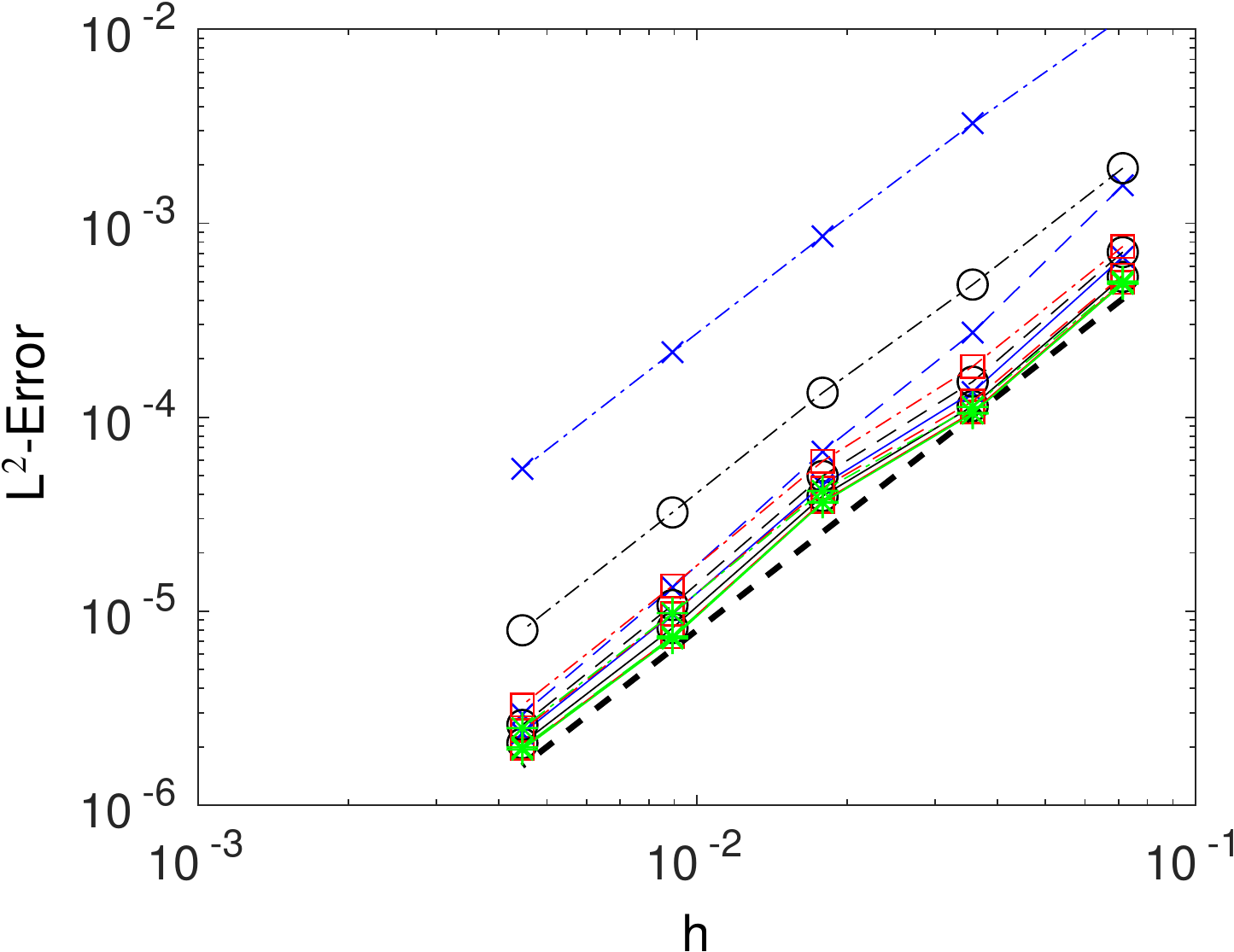} \hspace{0.2cm}
\includegraphics[width=0.44\textwidth]{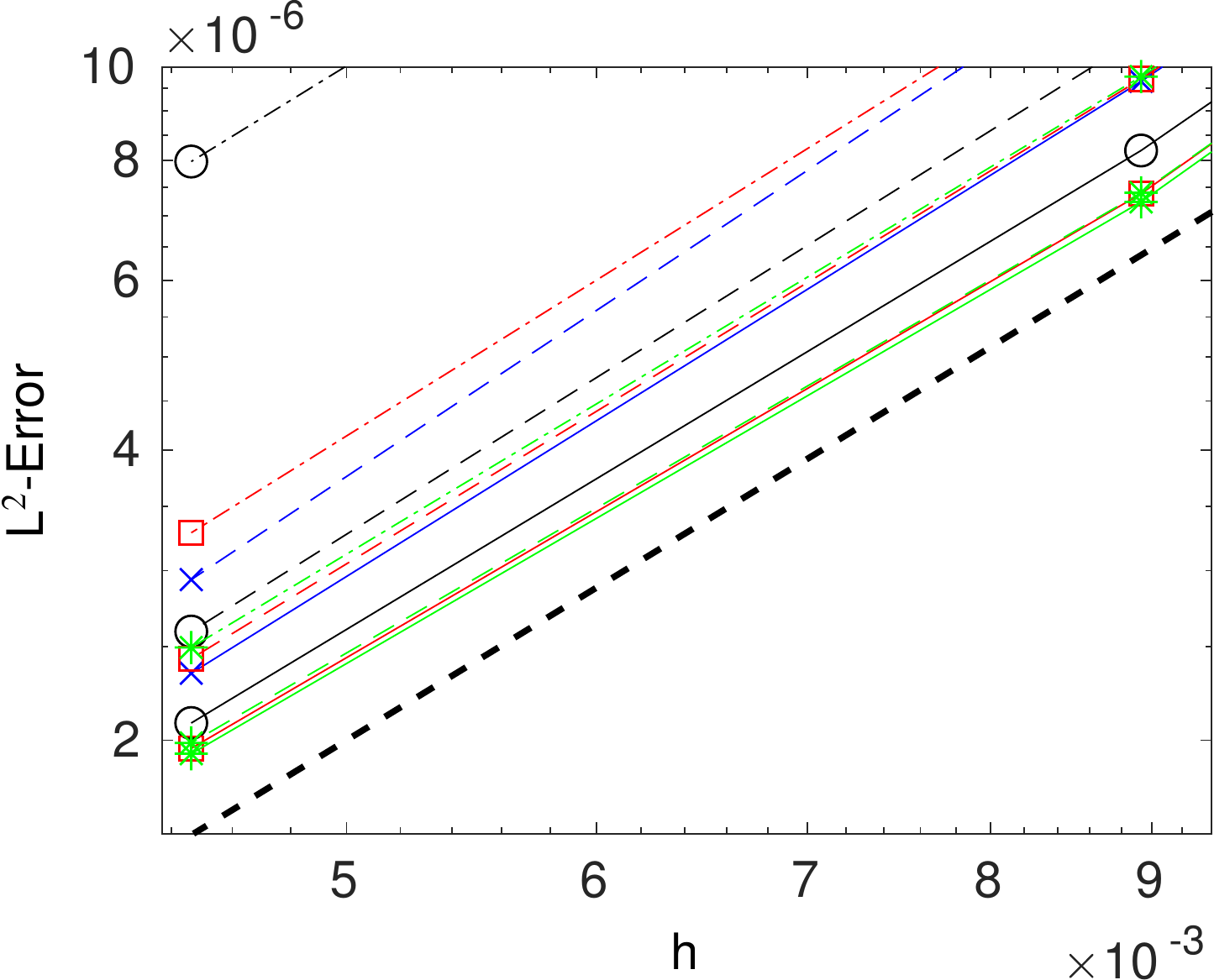} \\
\includegraphics[width=0.45\textwidth]{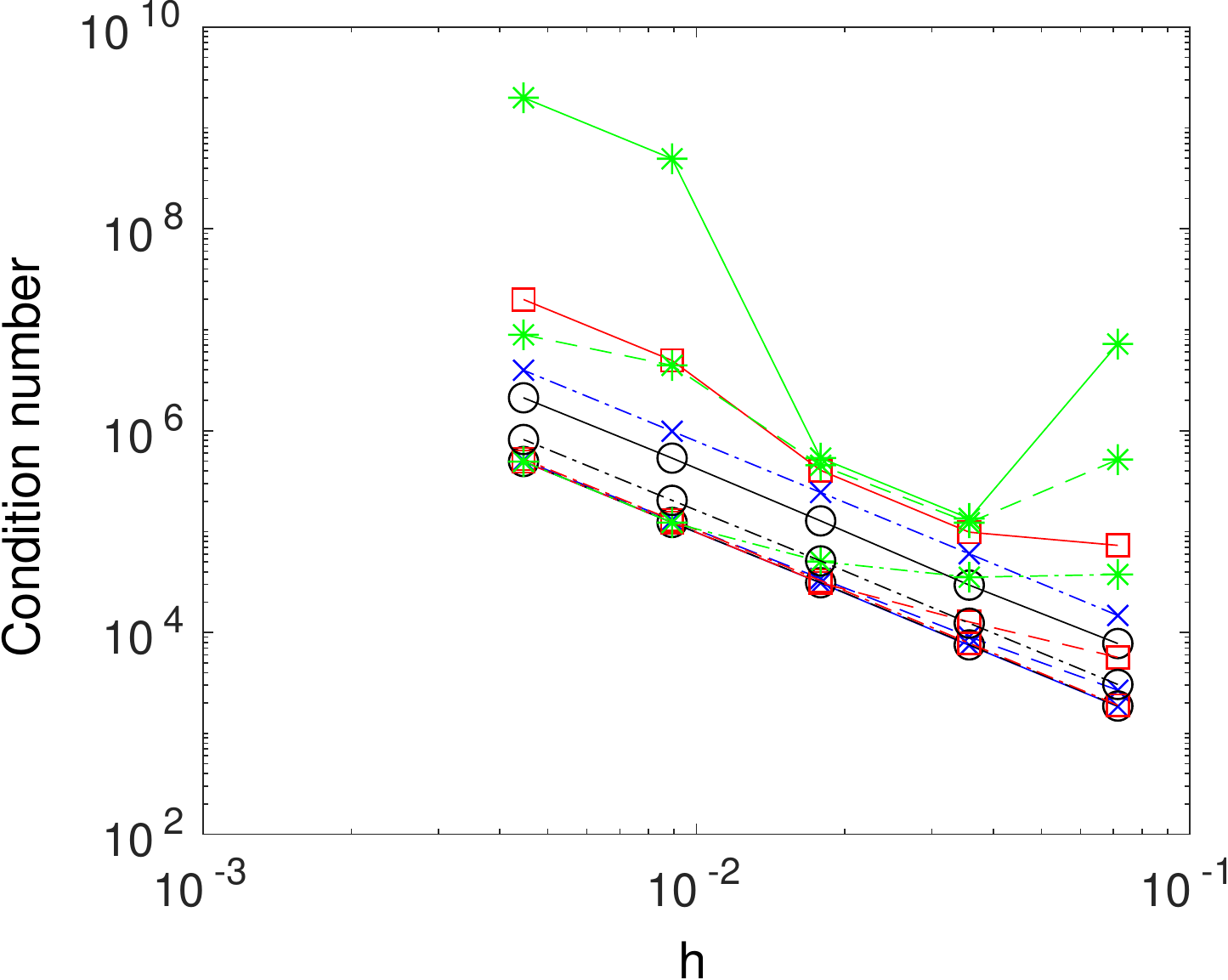} 
\hspace{0.2cm}
\includegraphics[width=0.44\textwidth]{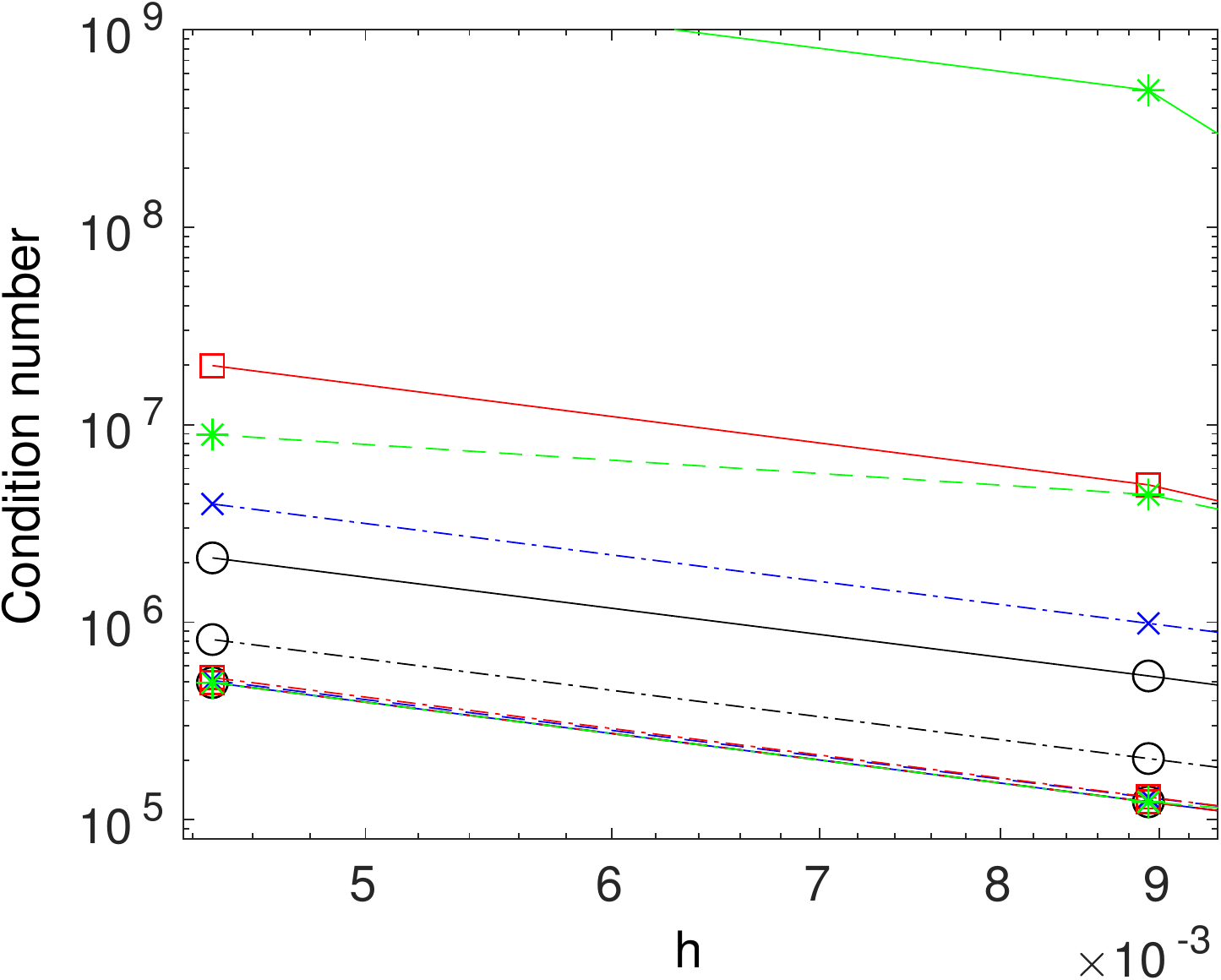} 
\caption{The Laplace-Beltrami problem: 
 The error and the condition number versus mesh size $h$ varying the stabilization constants and the parameter $\gamma$ in \eqref{eq:sh}-\eqref{eq:sh-gamma}. Linear elements are used, i.e. p=1. Solid lines: $\gamma=1$. Dashed lines: $\gamma=1/2$. Dashed dotted lines: $\gamma=0$. $c_{F,1}=c_{\Gamma,1}= c$. Blue crosses: $c= 1$, Black circles: $c= 10^{-1}$. Red squares: $c= 10^{-2}$. Green stars: $c= 10^{-4}$. The figures to the right are close ups on part of the figures to the left so that the different curves can be distinguished.  \label{fig:diffparamP1}}
\end{figure} 

\begin{figure}\centering
\includegraphics[width=0.45\textwidth]{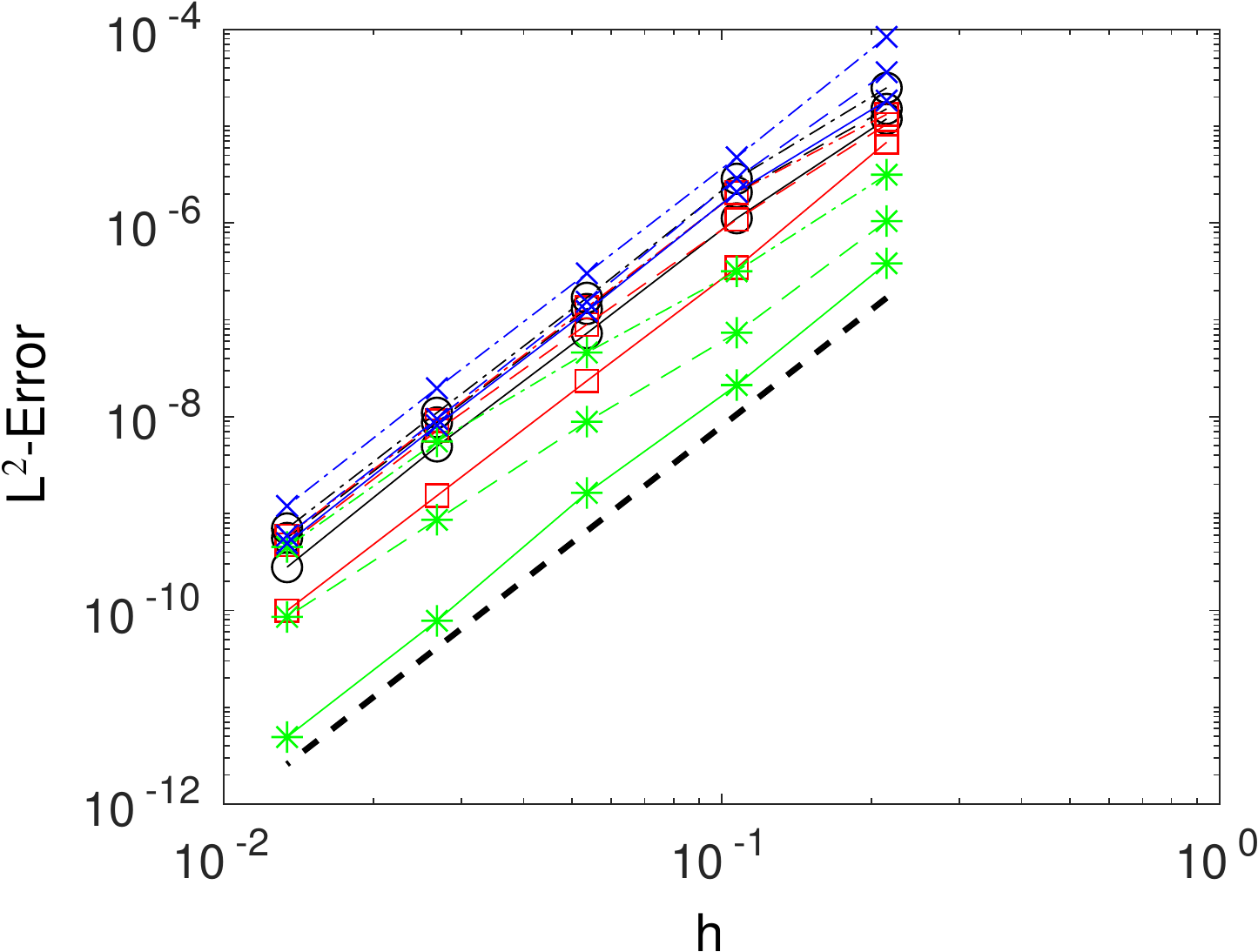} \hspace{0.2cm}
\includegraphics[width=0.42\textwidth]{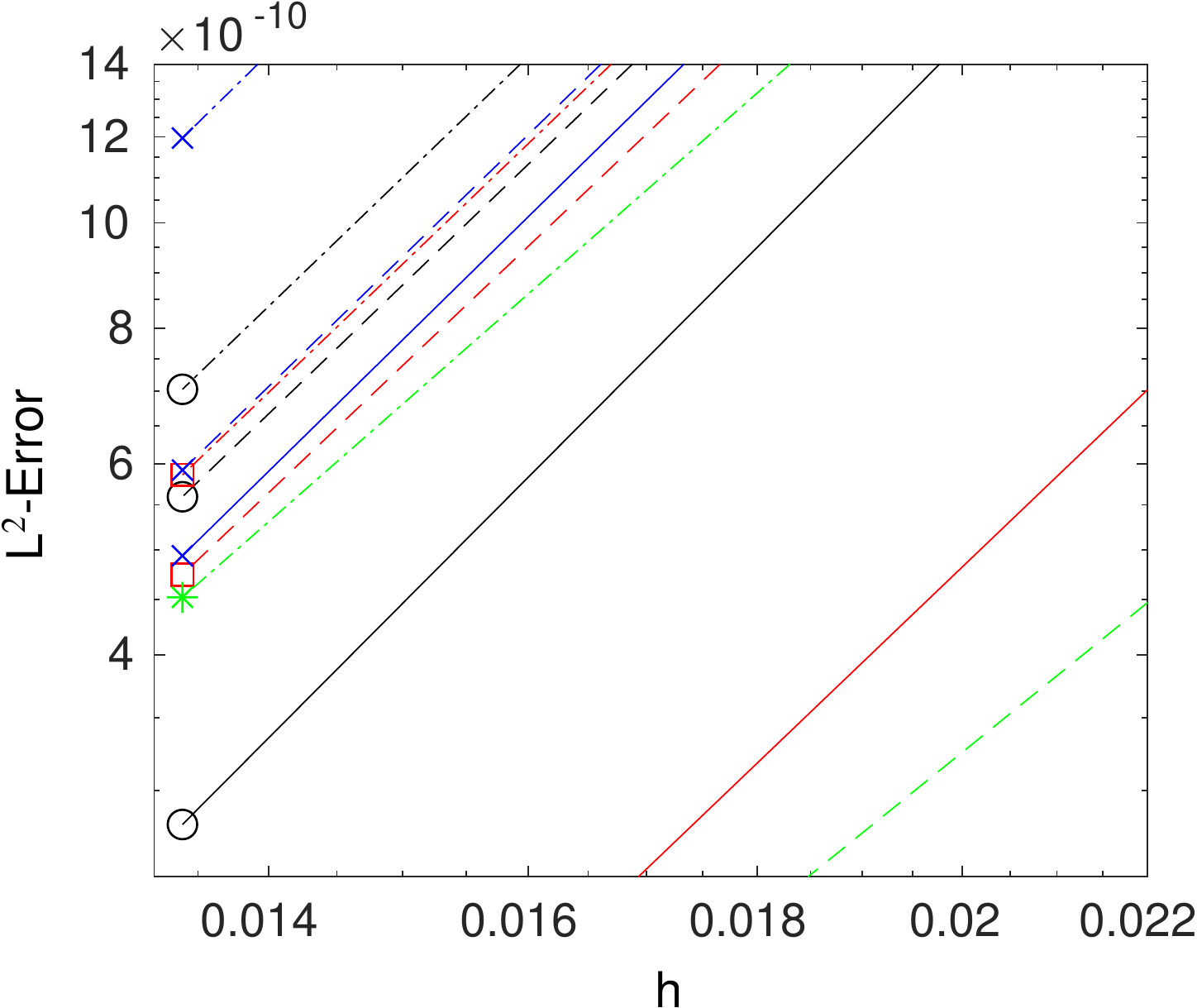} \\
\includegraphics[width=0.45\textwidth]{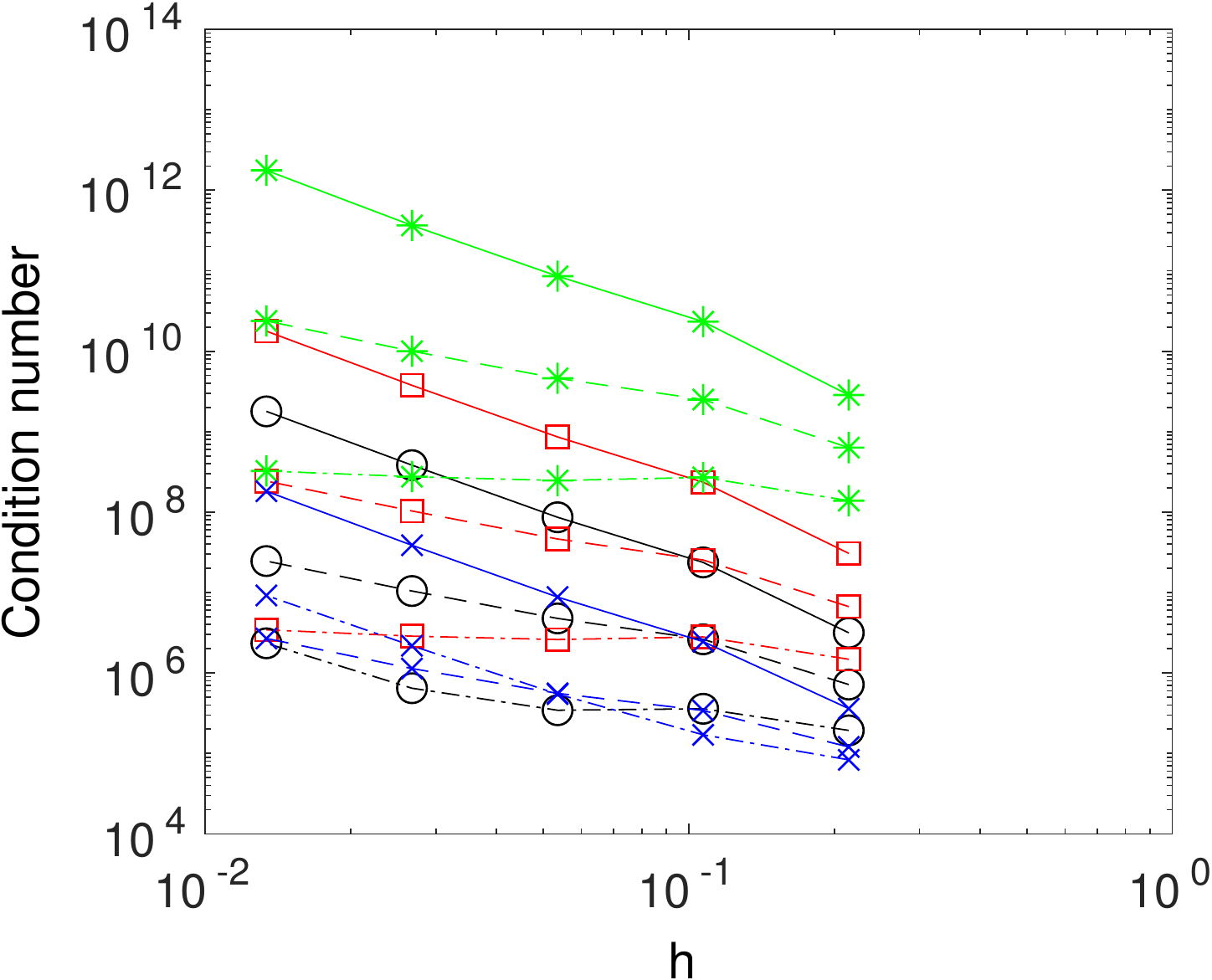}  \hspace{0.2cm}
\includegraphics[width=0.44\textwidth]{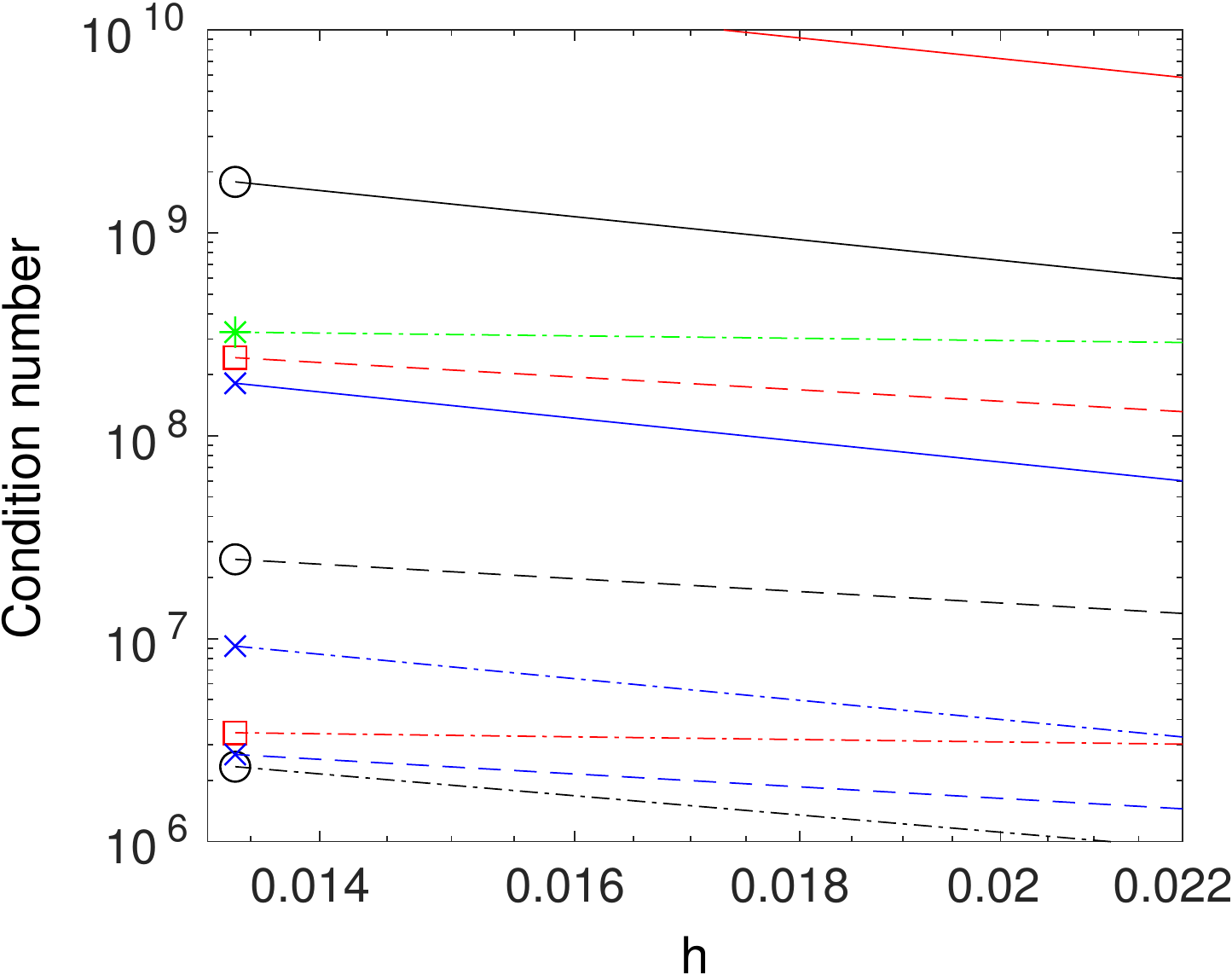}
\caption{The Laplace-Beltrami problem: 
 The error and the condition number versus mesh size $h$ varying the stabilization constants and the parameter $\gamma$ in \eqref{eq:sh}-\eqref{eq:sh-gamma}. Cubic elements are used, i.e. p=3. Solid lines: $\gamma=1$. Dashed lines: $\gamma=1/2$. Dashed dotted lines: $\gamma=0$. $c_{F,1}=c_{\Gamma,1}= c\cdot 0.07$, $c_{F,2}=c_{\Gamma,2}=c\cdot 0.004$, $c_{F,3}=c_{\Gamma,3}= c\cdot 2 \cdot 10^{-4}$. Blue crosses: $c=1$. Black circles: $c=10^{-1}$. Red squares: $c= 10^{-2}$. Green stars: $c= 10^{-4}$.  The figures to the right are close ups on part of the figures to the left so that the different curves can be distinguished. \label{fig:diffparamP3}}
\end{figure}

\subsection{The Mass Matrix}
We now consider the problem of finding $u_h \in V_h^p$ such that 
\begin{equation}\label{eq:Mh}
\int_{\Gamma_h}  u_h  v_h \ ds_h+ s_h(u_h,v_h)=\int_{\Gamma_h} f_h v_h \ ds_h,
\end{equation}
where $s_h$ is the stabilization term defined in~\eqref{eq:sh}. The interface $\Gamma$ is a circle of radius 1 centered at the origin and we generate a uniform triangular mesh with $h=h_{x_1}=h_{x_2}$ on the computational domain: $[-1.5, \ 1.5]\times [-1.5, \ 1.5]$. 
We let $f_h$ be
\begin{equation}
f_h=\frac{-(6x_1x_2(x_1^4 - 4x_1^2x_2^2 + x_2^4))}{(x^2 + y^2)^4}.
\end{equation}

We study the condition number of the mass matrix using different
degrees of polynomials, $p$, and different stabilization
terms. Fig.~\ref{fig:errordiffelemM} shows that the stabilization term
we propose results in optimal convergence rates in the $L^2$-norm and
optimal scaling of the condition number ($\mathcal{O}(1)$) of the mass
matrix as it did for the Laplace-Beltrami operator. 
As before, the parameters used in \eqref{eq:sh}-\eqref{eq:sh-gamma} influence the magnitude of the error and the condition number. For the results shown in Fig.~\ref{fig:errordiffelemM} we have used  $c_{F,j}=c_{\Gamma,j}=0.03\cdot 20^{-j}$, $j=1, \cdots, p$. Other choices could give smaller condition numbers without increasing the error significantly and also give better scaling with respect to the polynomial degree of the condition number.

In Fig.~\ref{fig:errordiffstabP1M} and~\ref{fig:errordiffstabP3M} we
compare different stabilization terms for linear and cubic elements, respectively. 
The proposed stabilization with $\gamma=1$ is compared with using no stabilization i.e.,  $c_{F,j}=c_{\Gamma,j}= 0$, a pure face stabilization i.e. $c_{\Gamma,j}= 0$ with $\gamma=1$, a pure interface stabilization i.e. $c_{F,j}=0$ with $\gamma=1$, and the stabilization term \eqref{eq:normalgradel} with $\alpha=1$.
In Fig.~\ref{fig:errordiffstabP1M} we also show results with two different constants in the stabilization terms.  We see that only the proposed stabilization and the stabilization term \eqref{eq:normalgradel} yield optimal scaling of the condition number ($\mathcal{O}(1)$) and both the error and the condition number are very similar for these two stabilization terms.

For cubic elements we show results with three different constants in the stabilization terms, see Fig.~\ref{fig:errordiffstabP3M}.  Decreasing the constants in the stabilization decreases the magnitude of the error but the condition number increases. When the stabilization is too weak the convergence rate is not optimal.

Our observations are similar to what we observed for the Laplace-Beltrami operator. For higher order elements the linear systems are severely ill-conditioned without stabilization and the behavior of the error is oscillatory and the convergence rate will depend on how the geometry cuts the background mesh. We also see that neither the pure face stabilization nor a pure interface stabilization give enough control of the condition number and that we need, as we propose, the combination of them.  

\begin{figure}\centering
\includegraphics[width=0.4\textwidth]{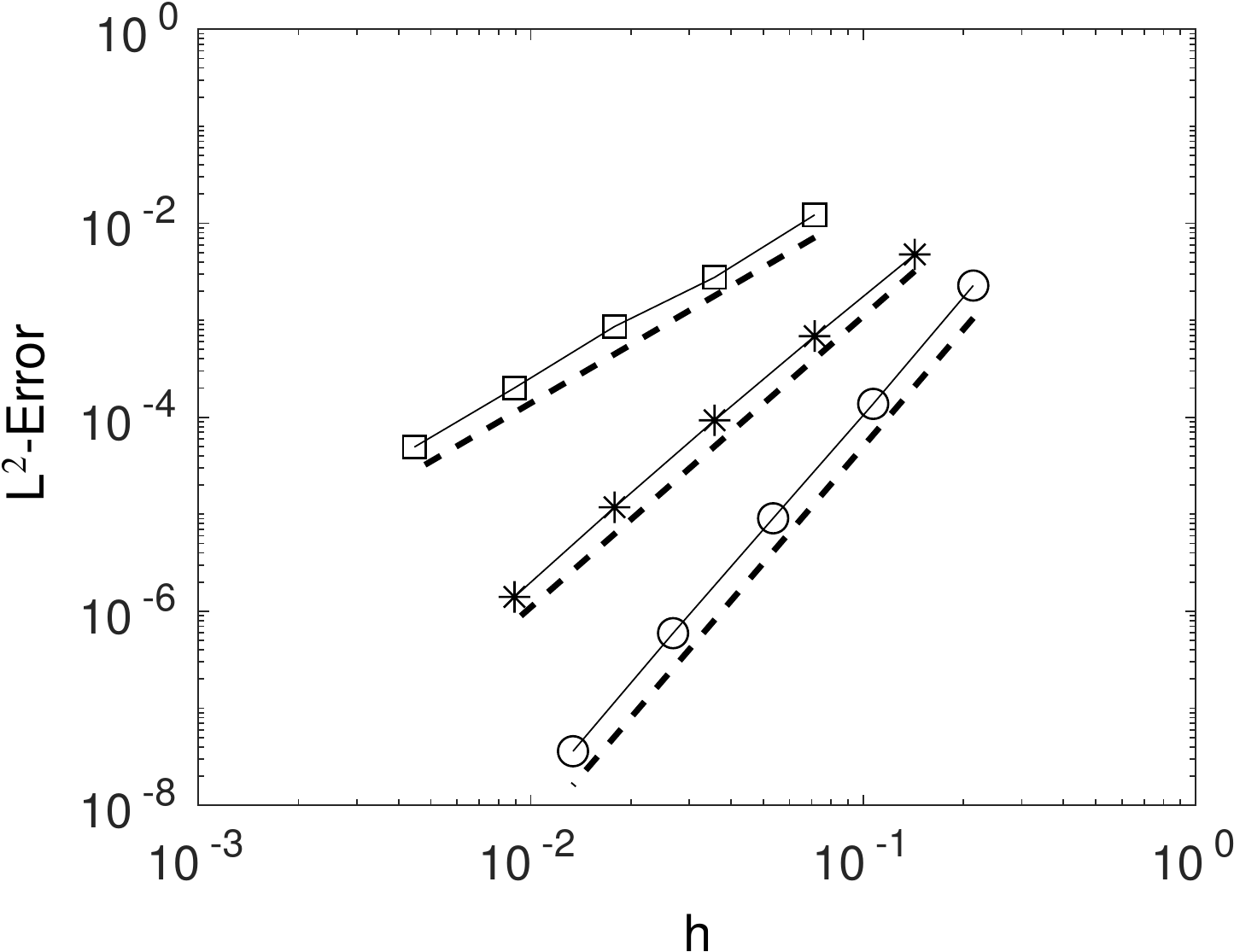} \hspace{0.2cm}
\includegraphics[width=0.4\textwidth]{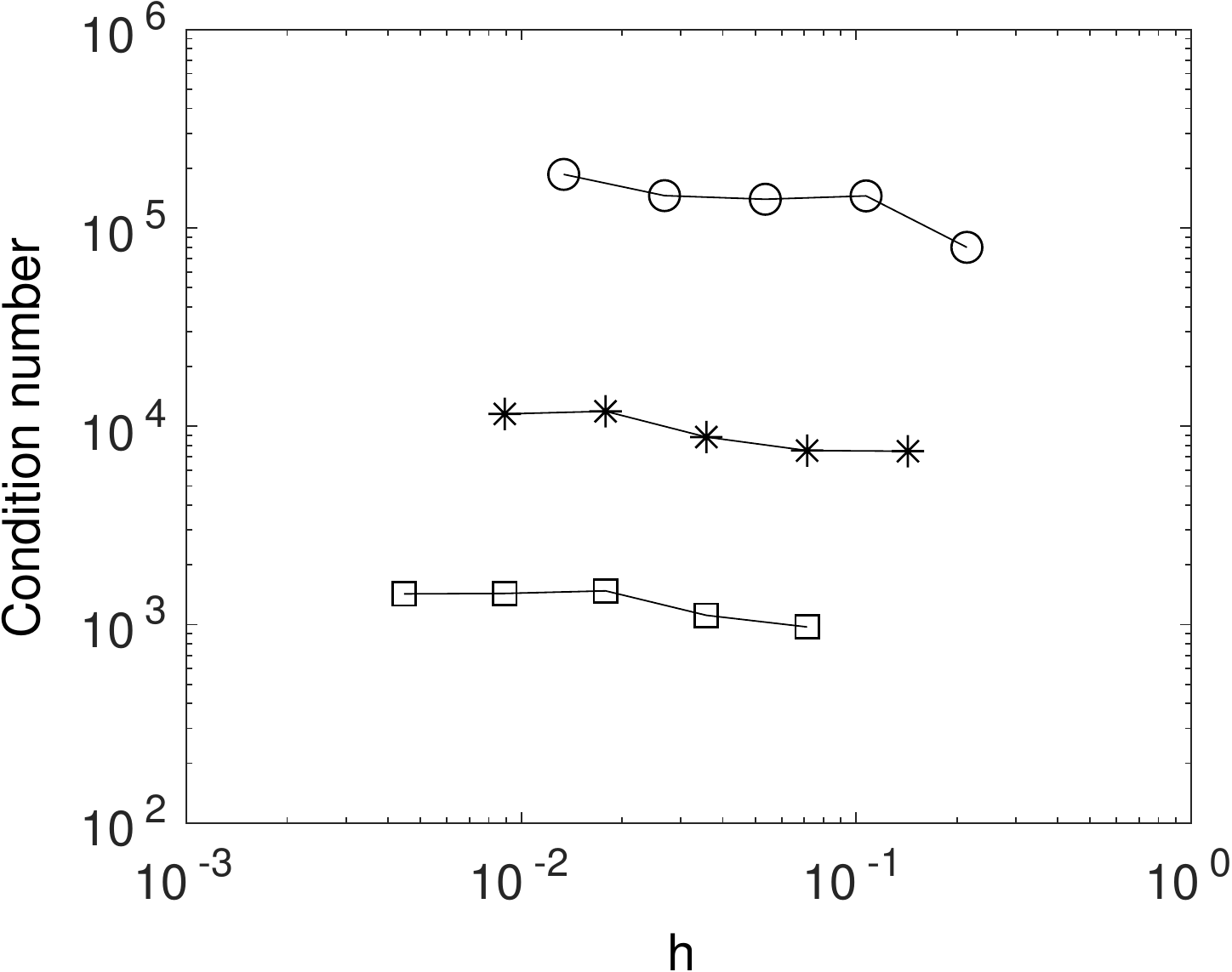}
\caption{The mass matrix: 
 The error and condition number versus mesh size $h$ for different degrees  of polynomials, p,  in the space discretization. Squares: p=1. Stars: p=2. Circles: p=3. Left: The error measured in the $L^2$-norm versus mesh size $h$.  The dashed lines are indicating the expected rate of convergence and are proportional to  $h^{p+1}$. Right: The condition number versus mesh size $h$.  \label{fig:errordiffelemM}}
\end{figure} 

\begin{figure}\centering
\includegraphics[width=0.4\textwidth]{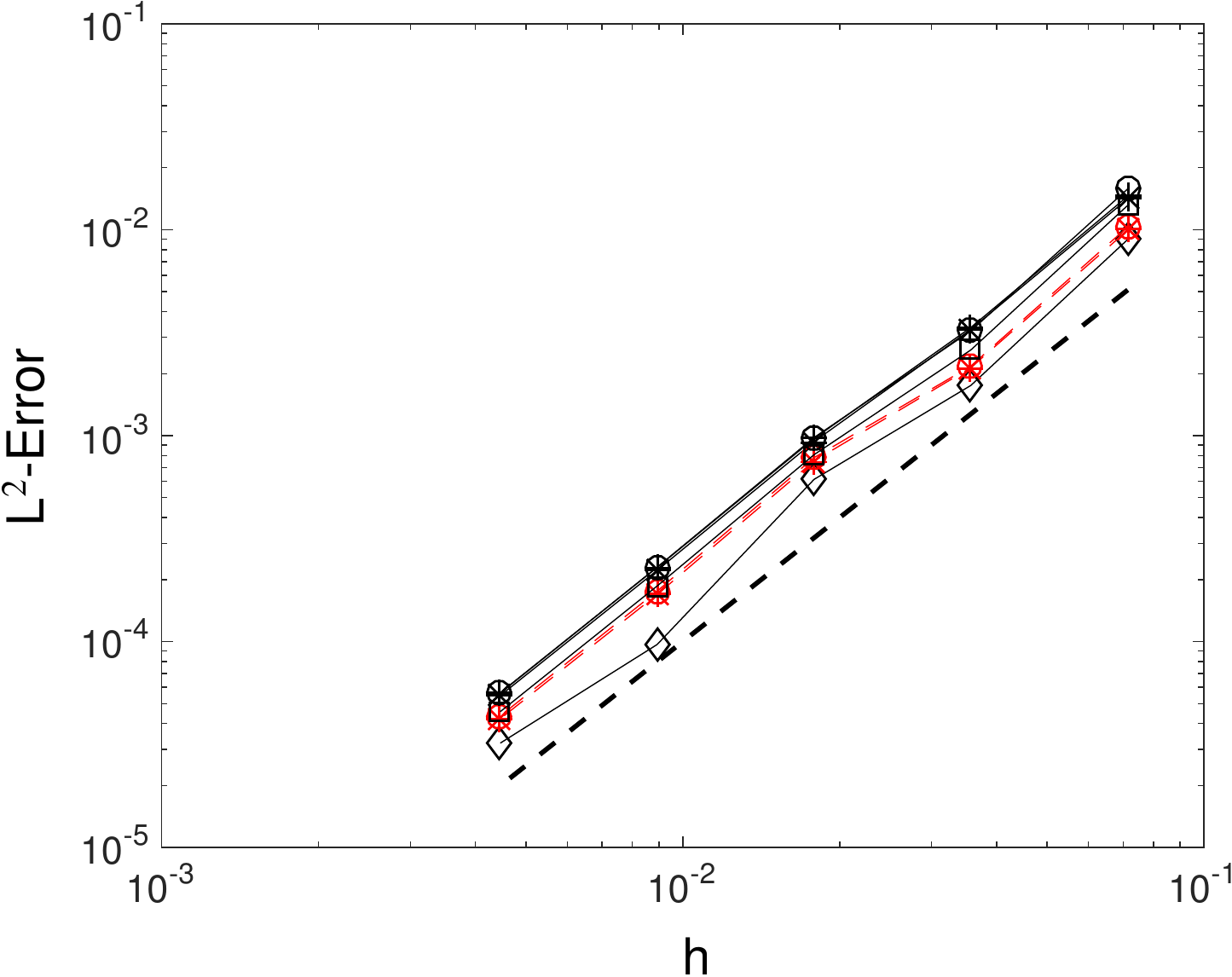}
\hspace{0.3cm}
\includegraphics[width=0.4\textwidth]{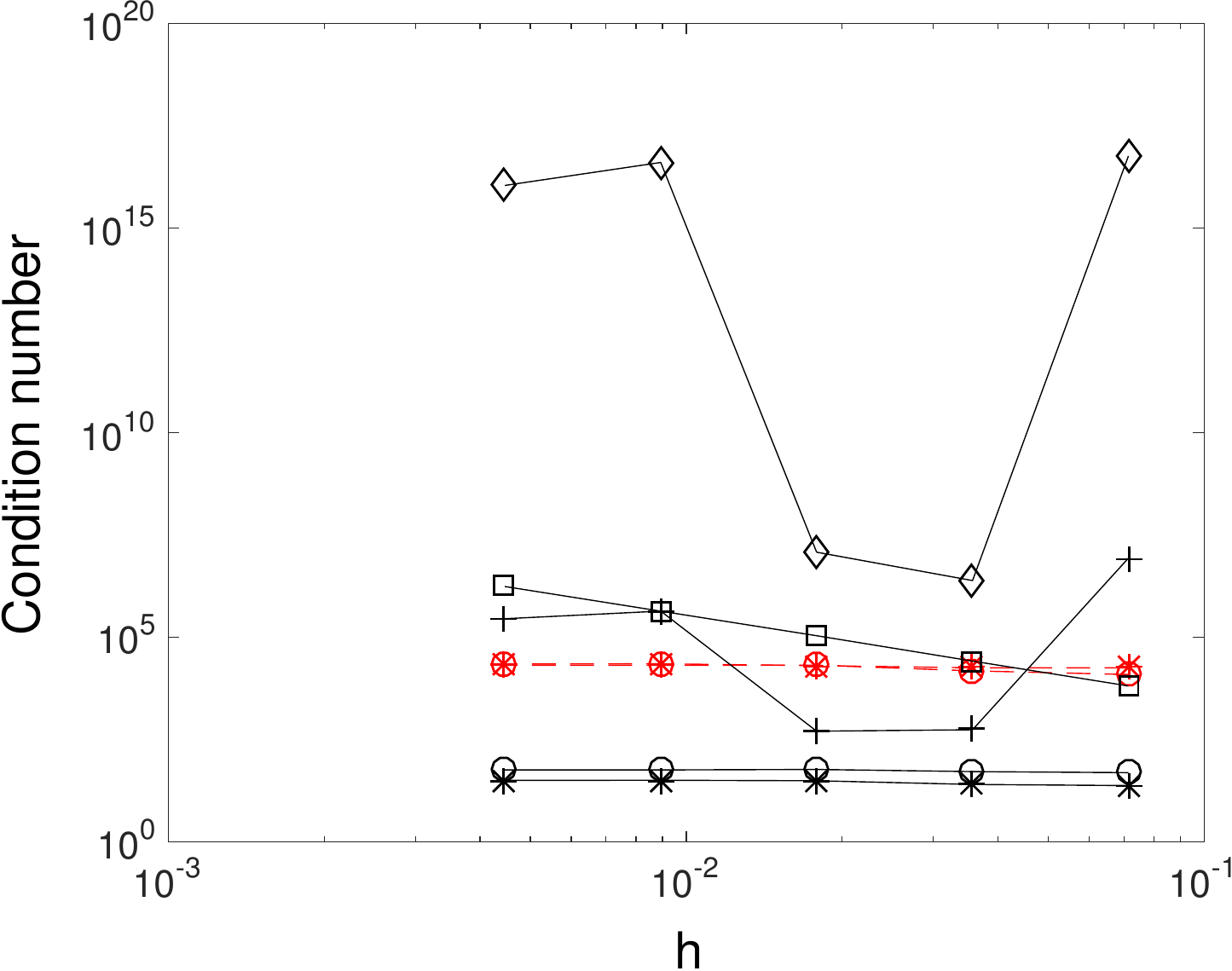}
\caption{The mass matrix:
The error and condition number versus mesh size $h$ using different stabilization terms. Linear elements are used, i.e. p=1. 
Diamonds: no stabilization is added. Circles: the proposed stabilization with $\gamma=1$. Squares: the pure face stabilization. Stars: the stabilization term \eqref{eq:normalgradel} with $\alpha=1$.
Left: The error measured in the $L^2$-norm versus mesh size $h$.  The dashed line is indicating the expected rate of convergence and is proportional to $h^{2}$.  Right: The condition number versus mesh size $h$. Symbols connected with solid lines: $c_{F,j}=10^{-1}$, $c_{\Gamma,j}=10^{-1}$ and $c_{\mcT}=10^{-1}$ in~\eqref{eq:normalgradel}. Symbols connected with dashed lines: $c_{F,j}=c_{\Gamma,j}=10^{-4}$, and $c_{\mcT}=10^{-4}$.
 \label{fig:errordiffstabP1M}}
\end{figure} 

\begin{figure}\centering
\includegraphics[width=0.4\textwidth]{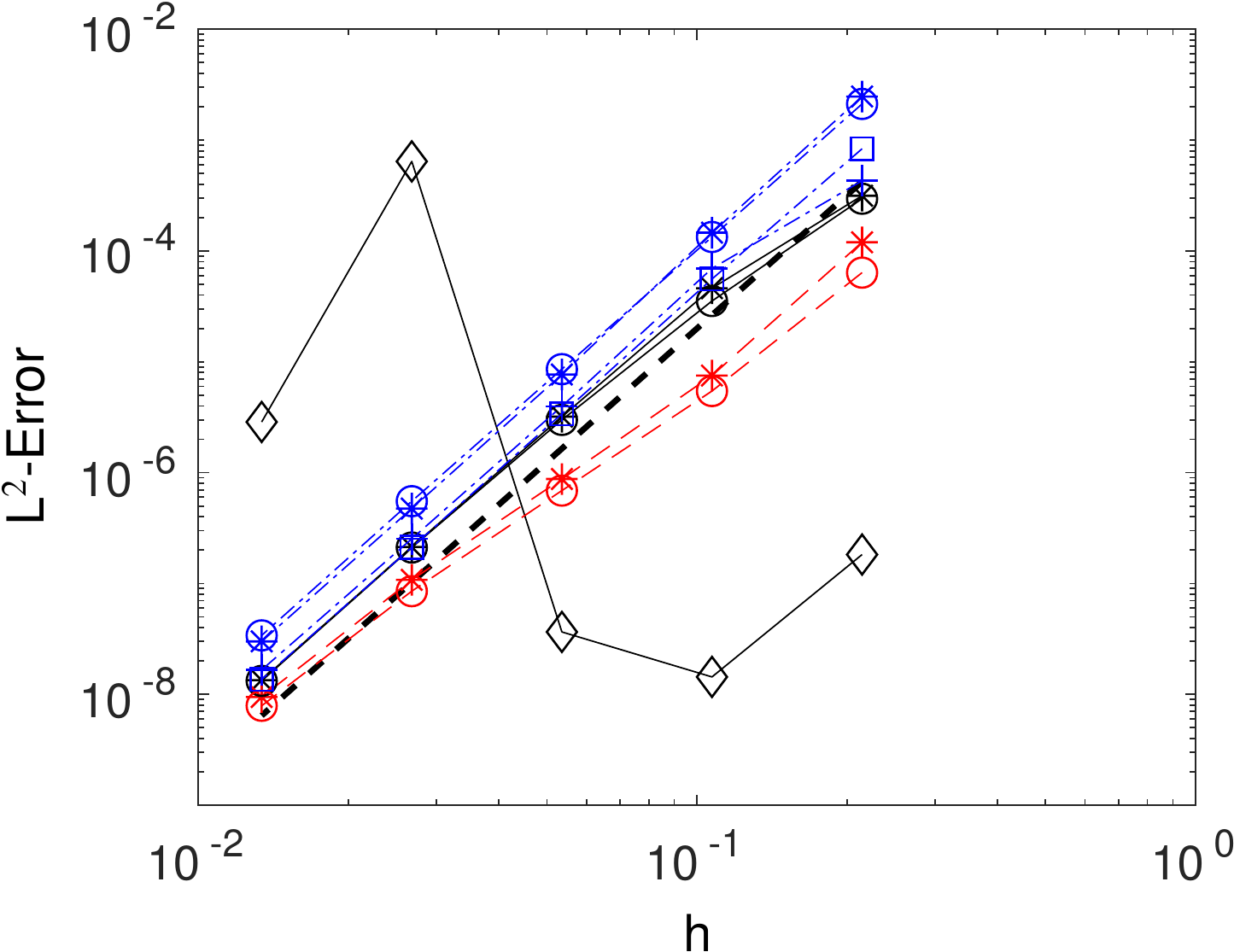} 
\hspace{0.3cm}
\includegraphics[width=0.4\textwidth]{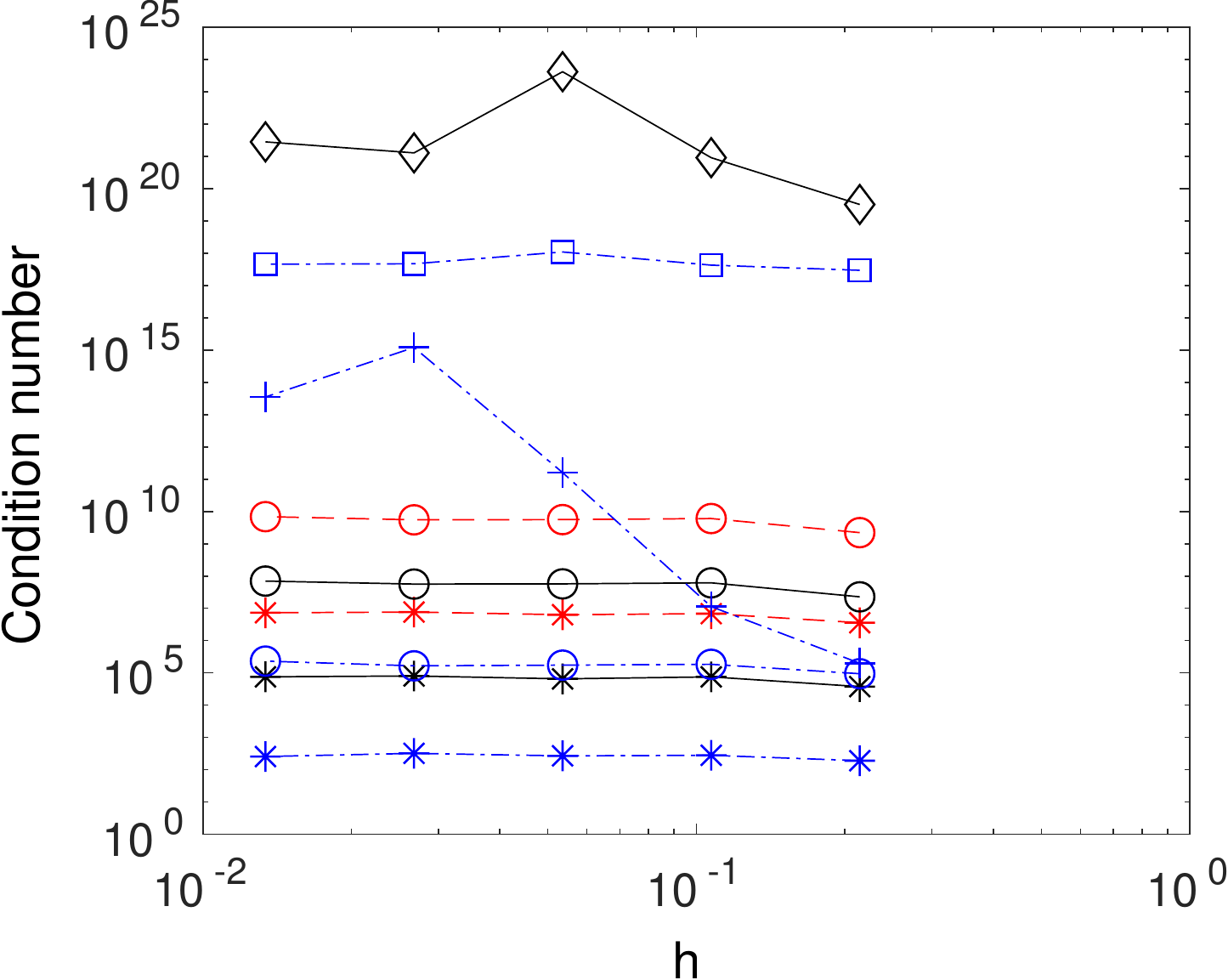}
\caption{The mass matrix: The error and condition number versus mesh
  size $h$ using different stabilization parameters. Cubic elements
  are used, i.e. p=3. Diamonds: no stabilization is added. Circles: the proposed stabilization with $\gamma=1$. Squares: the pure face stabilization. Stars: the stabilization term \eqref{eq:normalgradel} with $\alpha=1$. 
Left: The error measured in the $L^2$-norm versus mesh size $h$.  The dashed line is indicating the
  expected rate of convergence and is proportional to $h^{4}$.  Right:
  The condition number versus mesh size $h$.
 Symbols connected with dashed dotted lines (blue) have larger constants in the stabilization terms than symbols connected with solid lines (black) and symbols connected with dashed lines (red) which have the smallest constants in the stabilization terms. 
\label{fig:errordiffstabP3M}}
\end{figure}

\subsection{The Mean Curvature Vector} \label{sec:meancurv}
We consider the interfaces given by the zero contour of the following level set functions
\begin{align}
\phi&=\frac{x^2}{0.64}+y^2-0.25  \quad &&\textrm{Ellipse} \\
\phi&=\lp z^2+\lp (x^2+y^2)^{1/2}-1 \rp^2\rp^{1/2}-0.5  \quad &&\textrm{Torus} \\
\phi&= 0.02-((x^2 +y^2 -0.75)^2 +(z^2 -1)^2)((y^2 +z^2 -0.75)^2 + && \nonumber \\
&+(x^2 -1)^2)((z^2 +x^2 -0.75)^2 +(y^2 -1)^2) \quad &&\textrm{Deco-cube} \\
\end{align}
We generated meshes independently of the position of the given interface. We define the mesh size by $h=1/N^\frac{1}{d}$ where $N$ denotes the total number of nodes.  We construct an approximate level set function $\phi_h$ using the nodal interpolant $\pi_h^1 \phi$ on the background mesh and let the interface be the zero level set of $\phi_h$. We use linear Lagrange basis functions.  
Given the discrete coordinate map $x_{\Gamma_h} : \Gamma_h \ni x \mapsto x \in {\bf R}^d$ we want to find the stabilized discrete mean curvature vector $\mcu_h \in [V_h^1]^d$  such that
\begin{equation}\label{eq:meancurvdisc}
(\mcu_h, v_h)_{\Gamma_h}+ s_h(\mcu_h,v_h)= (\nabla_{\Gamma_h} x_{\Gamma_h}, \nabla_{\Gamma_h} v_h)_{\Gamma_h},
\end{equation}
where $s_h$ is a stabilization term. The torus and the deco-cube are examples from~\cite{HanLarZah15}. In~\cite{HanLarZah15} we proved that using the ghost penalty stabilization~\eqref{eq:stabface} 
the method \eqref{eq:meancurvdisc} is a convergent first order method with respect to the $L^2$-norm. Therefore, we also expect to obtain first order accurate approximations of the mean curvature vector using the proposed stabilization term.

We compare the approximation from~\eqref{eq:meancurvdisc}  with the vector  $\mcu=-\lp \nabla \cdot n \rp n$, where $n=\frac{\nabla \phi}{|\nabla \phi|}$. In Fig.~\ref{fig:errormeancurv} we show the error in the $L^2$-norm using the proposed stabilization with $\gamma=0$ and the stabilization term~\eqref{eq:normalgradel} developed in~\cite{BurHanLarMas16} and \cite{GraLehReu16} with $\alpha=-1$. We emphasize that the analysis in \cite{HanLarZah15} requires control over the jumps in the 
normal derivatives across edges on $\Gamma_h$, which can be controlled using 
(\ref{eq:stabface}), and without appropriate stabilization we don't expect any convergence in $L^2$. Note also that we do not expect the element normal gradient stabilization to provide enough stabilization to ensure convergence of the mean curvature vector since some additional tangential control is required and thus this is a situation where face stabilization comes in naturally. In practice, we note that depending on the stabilization parameter we see that we are able to obtain first order convergence with both stabilization terms. However, the magnitude of the errors are always smaller using the proposed stabilization compared to using the stabilization term~\eqref{eq:normalgradel}. The magnitude of the error using the pure face stabilization is similar to the results with the proposed stabilization and not shown here. Furthermore, the condition number is better using the proposed stabilization term.  We emphasize that when higher order elements than linear are used it is of vital importance to use the proposed stabilization term to control the condition number since the pure face stabilization does not control the condition number and therefore the error may be dominated by roundoff errors. Full details of high order approximations of the mean curvature vector using this approach are presented in~\cite{FrZa}. 

In this example the face stabilization is of importance and in such cases the proposed stabilization is preferable compared to the stabilization term~\eqref{eq:normalgradel}.

\begin{figure}
\begin{center} 
\includegraphics[width=0.48\textwidth]{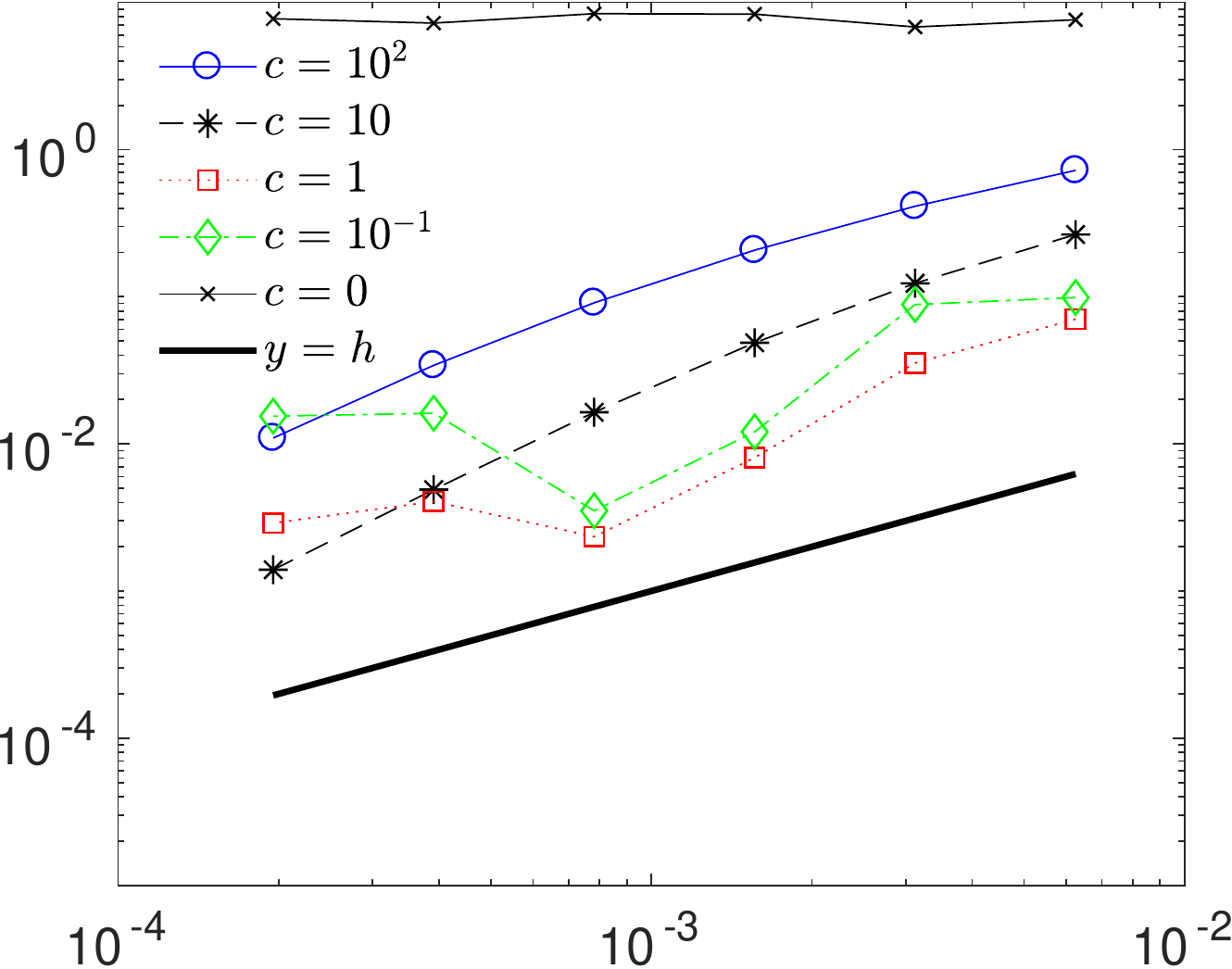} 
\includegraphics[width=0.48\textwidth]{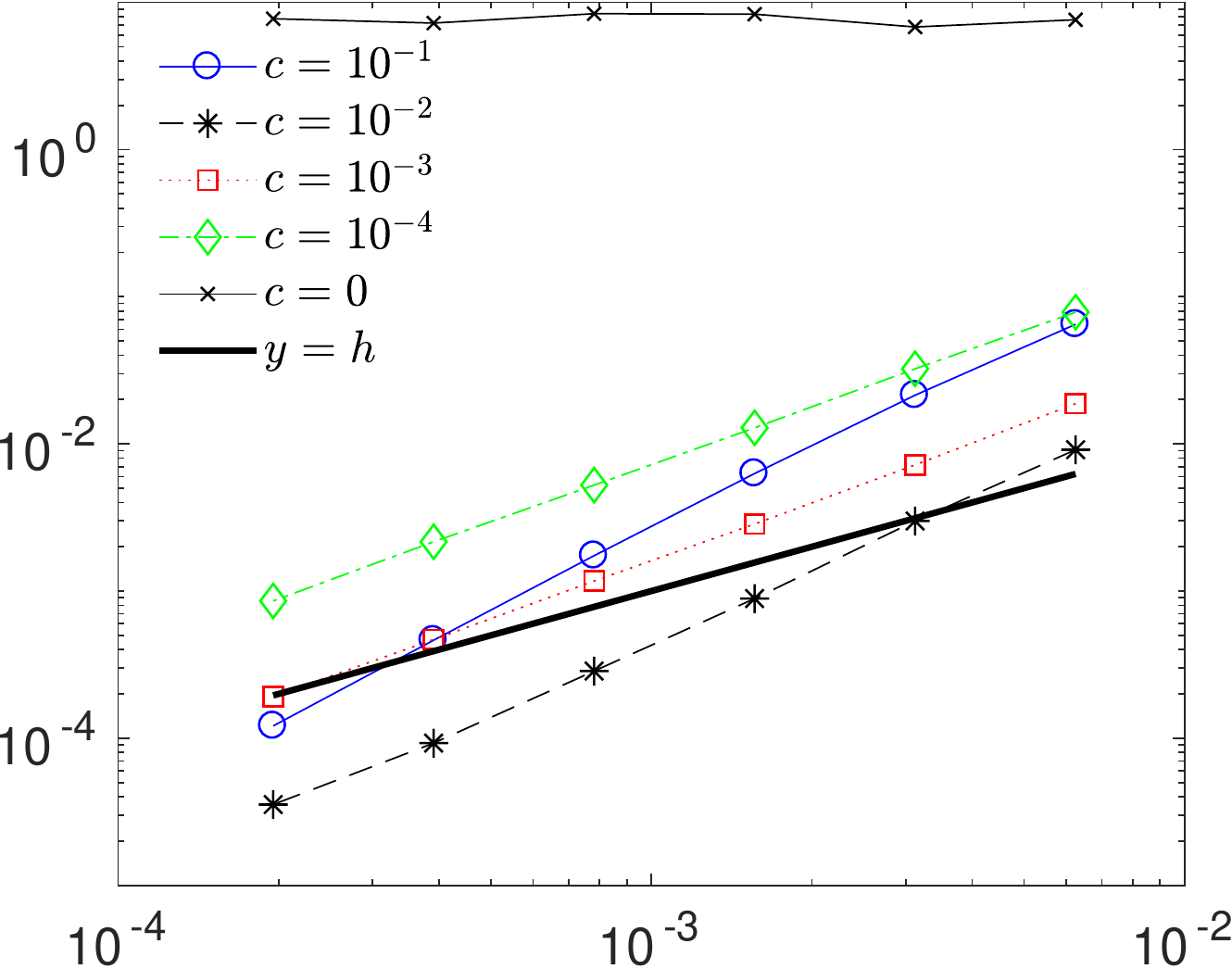} \\
\vspace{0.5cm}
\includegraphics[width=0.48\textwidth]{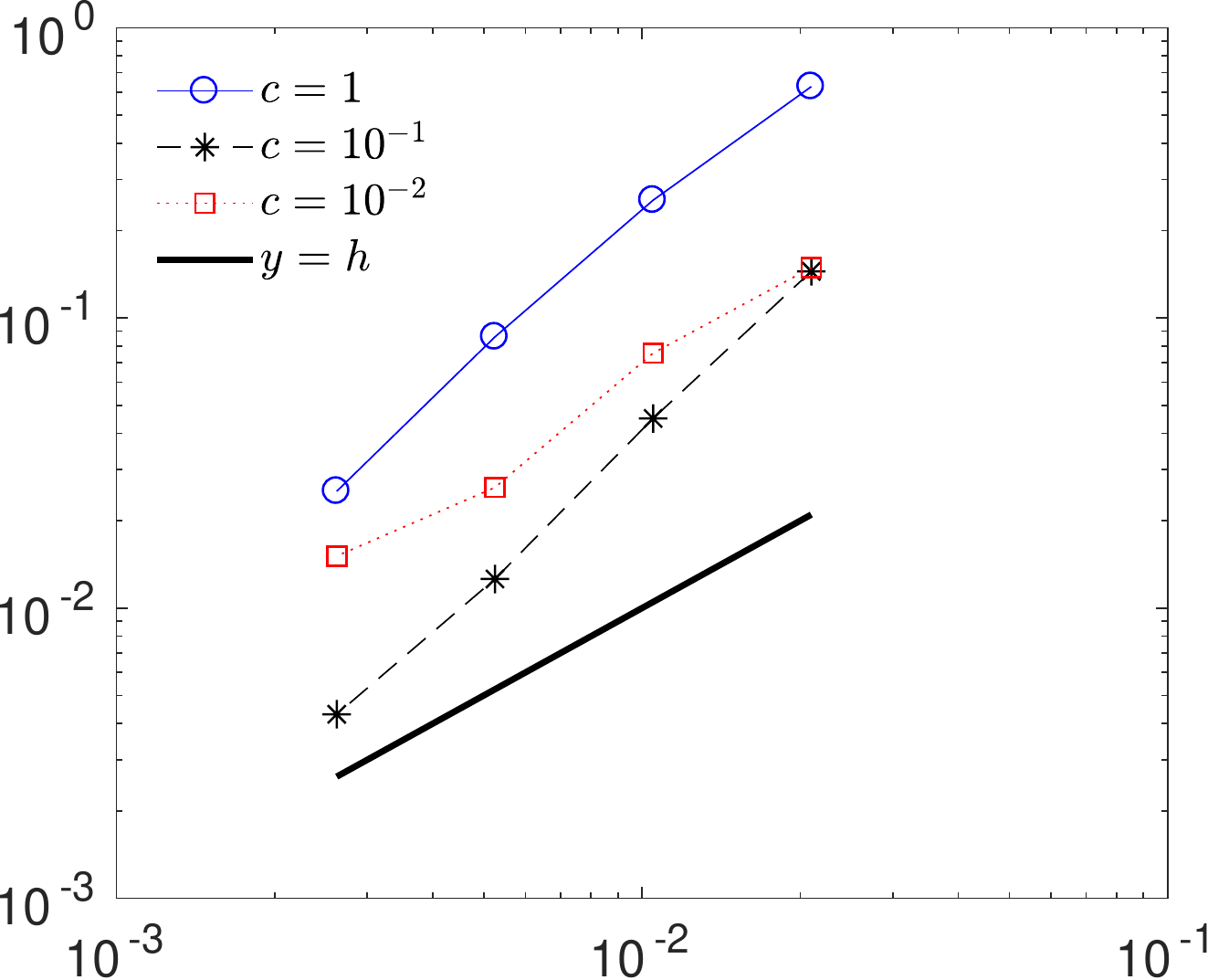} 
\includegraphics[width=0.48\textwidth]{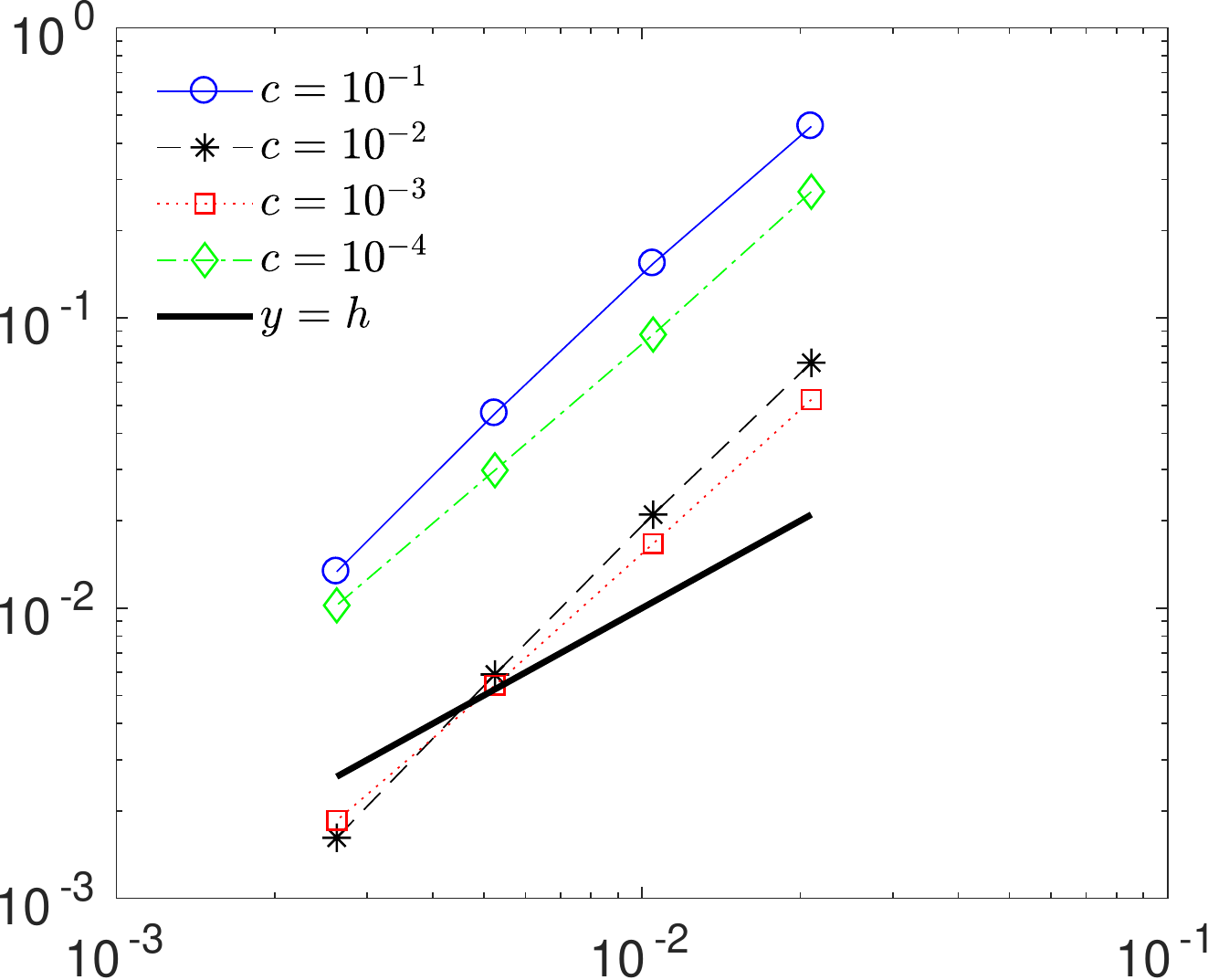}\\
\vspace{0.5cm}
\includegraphics[width=0.48\textwidth]{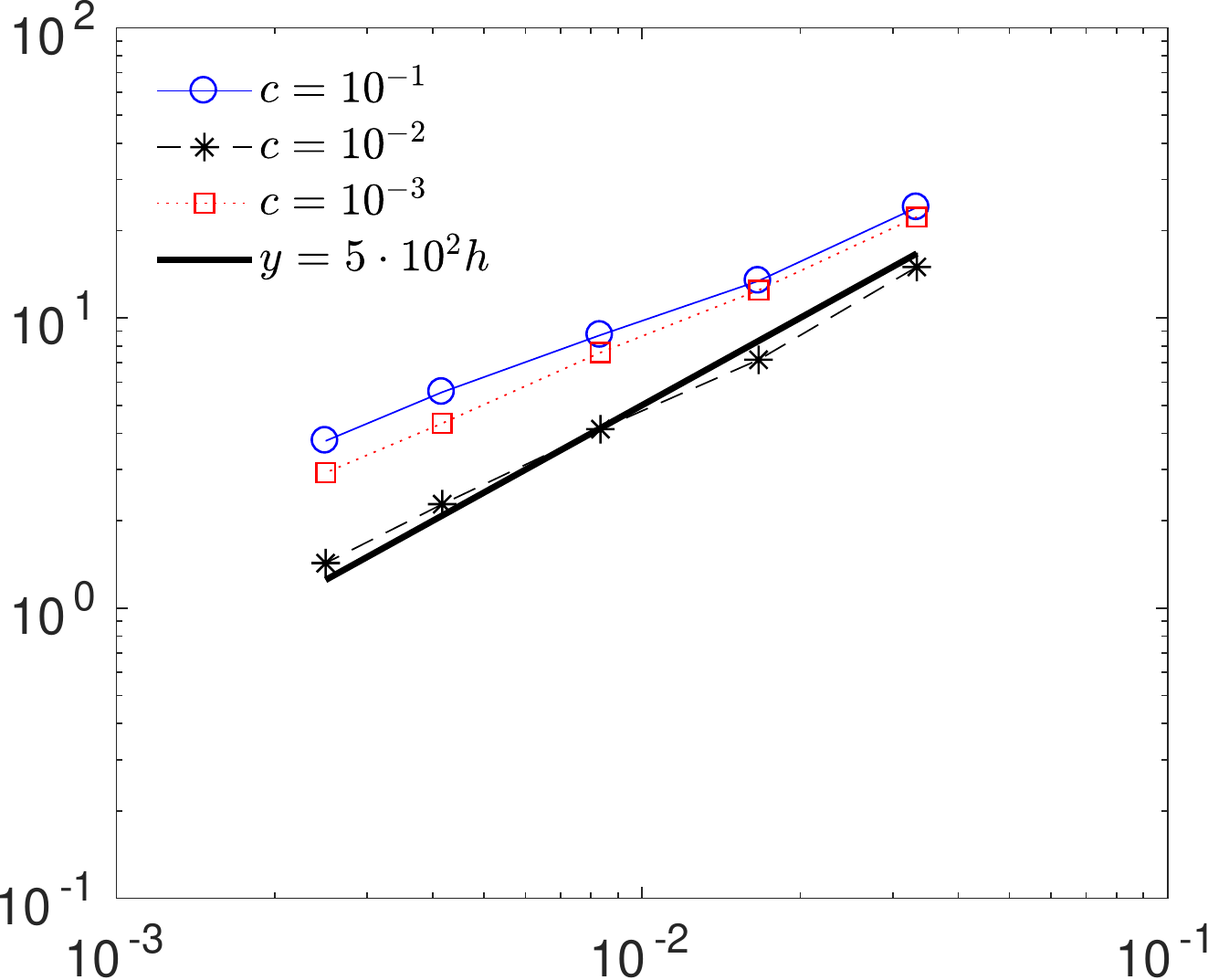} 
\includegraphics[width=0.48\textwidth]{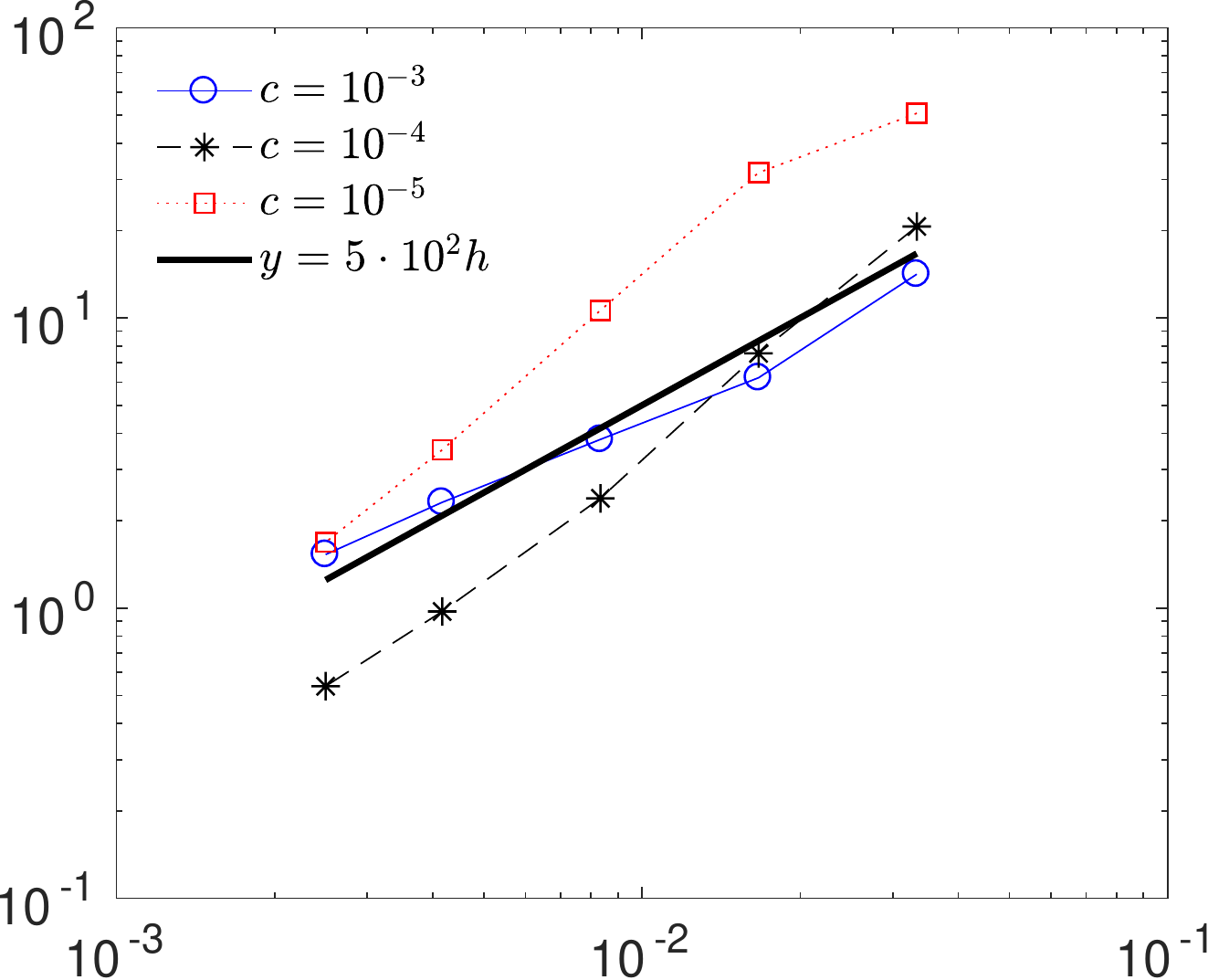}
\end{center}
\caption{The mean curvature vector: The error measured in the
  $L^2$-norm versus mesh size $h$. Linear elements are used, i.e. p=1. Left: The stabilization term~\eqref{eq:normalgradel} developed in~\cite{BurHanLarMas16} and \cite{GraLehReu16} with $\alpha=-1$ and $c_{\mcT}=c$. Right: The proposed stabilization with $\gamma=0$ and $c_F=c_\Gamma=c$. Top: Ellipse. Middle: Torus. Bottom: Deco-cube. \label{fig:errormeancurv}}
\end{figure}

\section{Discussion}
A strategy to control the condition number of the system matrix resulting from cut finite element discretizations, independently of how elements in the background mesh are cut by the geometry, is to add a stabilization term to the weak formulation.   For higher order elements than linear the linear systems resulting from cut finite element discretizations without stabilization are extremely ill-conditioned and it is not obvious how to precondition such linear systems. Therefore, the strategy of adding a stabilization term to the weak form has become a popular alternative to cure this ill-conditioning. 
The stabilization term most frequently used together with CutFEM is the ghost penalty stabilization~\cite{Bu10} which acts only on the element faces.  We have seen in this work that when higher order elements than linear are used in the cut finite element discretizations of surface PDEs the condition number can not be controlled by only adding such a face stabilization term.  We have proposed a remedy where we keep the face stabilization but also add a stabilization term acting on the interface/surface and prove that this new stabilization term controls the condition number for both linear as well as higher order elements.

As discussed in the introduction and in Section~\ref{sec:numexpS} a different stabilization term, the normal gradient stabilization~\eqref{eq:normalgradel} which is evaluated on each cut element, has recently been proposed in \cite{BurHanLarMas16} and \cite{GraLehReu16}. For surface PDEs this stabilization also controls the condition number for linear as well as higher order elements.  This term results in less non-zero entries in the stiffness matrix than the stabilization term we propose since it is local while the face stabilization term involves neighboring elements.  The implementation of~\eqref{eq:normalgradel} is also easier than the stabilization we propose. However, there are a couple of reasons why we choose to keep the ghost penalty term and propose a stabilization term based on this term. 
\begin{itemize}
\item The element stabilization~\eqref{eq:normalgradel} cannot be used in the discretization of bulk problems since it would destroy the convergence order while the ghost penalty stabilization could be used without destroying the convergence order and with control of the condition number of the linear systems.  Many applications include coupled bulk-surface problems and for those problems the ghost penalty stabilization is needed. In such applications, the effect of the proposed stabilization on the number of non-zero elements is very small and since the face stabilization is needed for the bulk problem the implementation of the proposed stabilization is also easy as only the stabilization term acting on the surface has to be added.
\item The ghost penalty stabilization has shown to be of importance also for other reasons than controlling the condition number. In~\cite{BuHaLaZa15}, for example, we show that by adding such a face stabilization we get a stable discretization for convection dominated problems  on surfaces and no other stabilization term such as for example a SUPG term is needed. In~\cite{HanLarZah15} we show that with the same face stabilization a method for computing the mean curvature vector for piecewise linear surfaces based on the Laplace--Beltrami operator enjoys first order convergence in the $L^2$-norm while, in general, without the stabilization convergence cannot be expected. In Section~\ref{sec:meancurv} we compute the mean curvature vector of three surfaces  and show that we obtain better accuracy with the proposed stabilization than with the stabilization~\eqref{eq:normalgradel}. Our method for computing the mean curvature vector can also be extended to give high order approximations~\cite{FrZa}. 
For time dependent problems on surfaces and coupled-bulk surface problems adding the ghost penalty stabilization enables to directly approximate space-time integrals in space-time cut finite element formulations using quadrature rules for the integrals over time \cite{HanLarZah16, Zah17}. This makes the implementation of space-time CutFEM convenient when higher order elements are used, in particular for coupled bulk-surface problems. The proposed stabilization thus also has all these advantages
 and in addition gives control of the condition number for linear as well as higher order elements. 
\item The proof of the condition number estimate is simple when the proposed stabilization term is used.  The main step in the proof is to show that one covers a non empty set inside an element by going in the normal direction of the surface from the part of the surface that intersects the element.  For the proposed stabilization one only needs to show that such a non-empty set exist for elements that have a large intersection with the surface.   However, to obtain the desired Poincar\'e inequality with the stabilization term~\eqref{eq:normalgradel} one needs to show that such a set exist in each element and in particular that this set is large enough~\cite{BurHanLarMas16, GraLehReu16}. We use the simplified structure of the proof to extend our results to the case of general codimension embeddings, see Section~\ref{sec:codim} where we present the minor modifications of the proof to Lemma \ref{lem:Poincare-Th} to verify the generalized version (\ref{eq:property5-cd}) of the Poincar\'e inequality P5.
\item We also note that the stabilization term~\eqref{eq:normalgradel} requires the computation of the normal vector inside an element. When the interface is implicitly represented by a higher dimensional function, for example a distance function, this normal vector can be taken to be the normal vector of the level sets of the function defining the interface implicitly. However, when an explicit representation is used for the surface a normal vector inside an element is not immediately available but normal vectors on the surface, used in the proposed stabilization, are available. 
\end{itemize}

\end{document}